\newcommand{\specialcell}[2][c]{%
\begin{tabular}[#1]{@{}c@{}}#2\end{tabular}}
\newcommand{\omu}{\overline{\mu}}
\newcommand{\ow}{\overline{w}}
\newcommand{\oW}{\overline{W}}
\newcommand{\oxi}{\overline{\xi}}
\newcommand{\oz}{\overline{z}}
\newcommand{\oZ}{\overline{Z}}
\newcommand{\pp}[2]{\frac{\partial #1}{\partial #2}}
\newcommand{\vv}{\mathbf{v}}
\let\mathcal\mathscr
\newtheorem{The}{Theorem}[section]
\newtheorem{Theorem}{Theorem}[section]
\newtheorem{Lemma}[The]{Lemma}
\newtheorem{Corollary}[The]{Corollary}
\theoremstyle{definition}
\newtheorem{Definition}[The]{Definition}
\newtheorem{Remark}[The]{Remark}
\subjclass[2010]{32V40, 58K50, 22F50, 53A55}
\begin{document}

%\large

\title{
Convergent normal forms for
\\
five dimensional totally nondegenerate
\\
 CR manifolds in $\mathbb C^4$
}

\author{Masoud Sabzevari}
\address{Department of Mathematics,
 Shahrekord University, 88186-34141 Shahrekord, IRAN and School of
Mathematics, Institute for Research in Fundamental Sciences (IPM), P.
O. Box: 19395-5746, Tehran, IRAN}
\email{sabzevari@ipm.ir}

\date{\number\year-\number\month-\number\day}

\maketitle

\begin{abstract}
Applying the equivariant moving frames method, we construct convergent normal forms for real-analytic $5$-dimensional totally nondegenerate CR submanifolds of $\mathbb C^4$. These CR manifolds are divided into several biholomorphically inequivalent subclasses, each of which has its own complete normal form. Moreover it is shown that, biholomorphically, Beloshapka's cubic model is the unique member of this class with the maximum possible dimension seven of the corresponding algebra of infinitesimal CR automorphisms. Our results are also useful in the study of biholomorphic equivalence problem between CR manifolds, in question.
\end{abstract}

\pagestyle{headings} \markright{Convergent normal forms for five dimensional CR manifolds}

\section{Introduction}

One of the most elementary examples of moving frames is the well-known Frenet frame defined on curves. In the late nineteen century, Darboux generalized the construction of moving frames on surfaces in Euclidean spaces. Later on, in the early twentieth century, Cartan \cite{Cartan-1935} developed extensively the theory of moving frames by extending it to more general submanifolds of homogeneous spaces. In Cartan's thought, moving frames were powerful tools to study geometric features of submanifolds under the action of transformation (pseudo-)groups. In 1990's, Peter Olver and his collaborators endeavored to develop a modern and far-reaching reformulation of Cartan's classical approach \cite{Olver-Fels-99, Olver-Pohjanpelto-08}. This modernization, known by the {\it equivariant moving frames theory}, considerably expands on Cartan's construction and provides new algorithmic tools for computing sought differential invariants.

Let $M$ be an arbitrary manifold acted on by a Lie (pseudo-)group $\mathcal G$. Let ${\rm J}^n(M, p)$ be the $n$-th order jet space of $p$-dimensional submanifolds of $M$. An $n$-th order moving frame is defined as a $\mathcal G^{(n)}$-equivariant section of the trivial bundle ${\rm J}^n(M, p)\times\mathcal G^{(n)}\rightarrow {\rm J}^n(M, p)$, where $\mathcal G^{(n)}$ is the induced action of $\mathcal G$ on ${\rm J}^n(M, p)$. Existence of such a map heavily relies upon providing a so-called {\it cross-section} which roughly is a fixed jet point $z_0^{(n)}\in {\rm J}^n(M, p)$ where every $z^{(n)}$ moves to it by a {\it unique} transformation of $\mathcal G^{(n)}$.

The (equivariant) moving frames theory has exhibited its large potential of applications in many other fields ({\it see e.g.} the bibliographies of \cite{Olver-2005, Olver-2018} for a list of relevant works in the literature). Of particular interest, it provides a modern reformulation of Cartan's classical approach \cite{Olver-1995} to equivalence problems \cite{Valiquette-SIGMA}. In this formalism, the cross-section associated with a moving frame plays the role of {\it normalization} in Cartan's method. In contrast to the classical approach, existence of the essential tool {\it recurrence formula} ({\it see} $\S$\ref{sec-moving-frame} for pertinent definition) enables one to normalize the corresponding Maurer-Cartan forms, if possible, without requiring explicit expressions of the appearing torsion coefficients and just by applying some linear algebra techniques. This method is applied so far to several standard equivalence problems such as those of polynomials, differential equations, differential operators and  variational problems ({\it cf.} \cite{Arnnaldsson-17, Berchenko-Olver-2000, Valiquette-11, Valiquette-SIGMA}).

Another significant application of the moving frames theory is in construction of normal forms for real-analytic manifolds $M$ acted on by certain transformation (pseudo-)groups $\mathcal G$ \cite{Olver-2018}. Roughly speaking, a normal form is made by employing the group transformations to simplify, as much as possible, the coefficients of the Taylor series expansion of $M$. This process runs quite parallel to find practical cross-sections underlying the constructing  of the corresponding moving frames. Surprisingly, normal forms also encode  $\mathcal G$-equivalence problems. Indeed, two manifolds $M$ and $M'$ with the same underlying group action $\mathcal G$ are equivalent if and only if they have identical normal forms at the matching points.

{\it In CR geometry setting}, "biholomorphic equivalence" and "normal form" are two central issues of profound interest. Studying the former one was initiated by Cartan in \cite{Cartan-1932} ({\it see also} \cite{Olver-1995}) while the latter was developed significantly by Chern and Moser in \cite{Chern-Moser} ({\it see also} \cite{Jacobowitz}). Despite their considerable overlapping, each of these theories has also its own merits. For example, Cartan's approach not only enables one to solve certain equivalence problems but also provides the opportunity of reaching the structure equations of the corresponding coframes, what encodes the structure of the associated symmetry algebras. Such features are almost inaccessible in the Chern-Moser normal form theory. On the other hand, although the equivalence invariants find themselves as some coefficients in the corresponding normal forms but Cartan's approach is unable to unveil explicit structure of such normal forms.

Much surprisingly, {\it Olver's approach of equivariant moving frames provides one with the opportunity of reaching simultaneously almost all expected advantageous of both theories of equivalence problems and normal forms}. Indeed, it builds a concrete and striking bridge between these two theories. {\it Even more} and slightly in contrast to the Chern-Moser method in \cite{Chern-Moser}\,\,---\,\,which is the original impulse of most works in CR geometry relevant to normal forms\,\,---\,\,the method of moving frames is much {\it systematic} in the sense that one can apply it order by order in an algorithmic way consisting of {\it symbolic} computations. But, in spite of its applications and as a kind of weird, this modern approach has not gained yet its deserving attention in CR geometry, neither for solving arisen biholomorphic equivalence problems nor for constructing desired normal forms. That is while historically, biholomorphic equivalence between $3$-dimensional nondegenerate real hypersurfaces in $\mathbb C^2$ is one of the primary problems studied by Cartan in \cite{Cartan-1932}. Anyway, the wide breadth of applications of the equivariant moving frames theory is encouraging and motivating enough to apply it in the rich geometry of CR manifolds.

A crucial drawback to the Chern-Moser construction of normal forms concerns the {\it convergence}. Indeed, the heuristic and intelligent method of Chern-Moser in constructing normal forms for real-analytic nondegenerate hypersurfaces entailed them to consider first the effect of {\it formal} transformations, {\it instead of holomorphic ones}, on the corresponding Taylor series. Although they proved finally that the constructed formal normal forms are convergent \cite[Theorem 3.5]{Chern-Moser} but the key issue of convergence remained much challenging and occasionally unsolved in the subsequent works inspired by Chern-Moser ({\it see e.g.} \cite{BES, Ebenfelt-98, Kolar-05, Kolar-12, KKZ-17, KZ-15, KZ-19, Wong-82}). For instance, it turned out in \cite{Kolar-12} that the normal form introduced in \cite{Kolar-05} is divergence although it does not damage its important application in solving biholomorphic equivalence problem between finite type degenerate hypersurfaces in $\mathbb C^2$.

The equivariant moving frames approach for constructing normal forms involves infinitesimal counterpart of the associated (pseudo-)groups. This, in contrast to the Chern-Moser approach, does not stick one to work with formal transformations. Thus impressively, {\it the equivariant moving frame method results in constructing convergent normal forms, in essence}.

Totally nondegenerate CR manifolds are investigated widely by Valerii Beloshapka following his researches among the last decades of twentieth century. Let $M$ be a (abstract) CR manifold ({\it see} \cite{BER} for pertinent definitions) equipped with the CR distribution $T^cM\subset TM$. For each $k=1, \ldots, \mu$, let $\frak g^{-k}$ be the vector spaces spanned by iterated Lie brackets between exactly $k$ vector fields of $T^cM$. Then, $M$ is called {\it totally nondegenerate} of {\it length} $\mu$ \cite{Beloshapka2004, MZ} if: (a) $T^cM$ is a regular distribution; (b) $TM$ can be generated by $\frak g^{-1}+\cdots+\frak g^{-\mu}$ and (c) the truncation:
\[
\big(\frak g^{-1}+\cdots+\frak g^{-(\mu-1)}\big)/\frak g^{-\mu}
\]
forms a depth $\mu-1$ free Lie algebra generated by $\frak g^{-1}=T^cM$\,\,---\,\,the latter is the {\it totall nondegeneracy condition}. Beloshapka in \cite{Beloshapka2004} also devised a certain machinery to construct in part defining equations of these CR manifolds. He also extended  Poincar\'{e}'s and Chern-Moser's strategy of introducing {\it model hypersurfaces} to higher codimensions. These models enjoy several interesting features. For example, in each fixed CR dimension $n$ and codimension $k$, dimension of their associated Lie algebras of infinitesimal CR automorphisms is maximum. Moreover, as is proved in \cite{MZ}, if $k>n^2$ then all origin preserving CR automorphisms of these models are linear ({\it see} \cite[Theorem 14]{Beloshapka2004} for further {\it nice} properties).
Let us confine ourselves to discuss in CR dimension $n=1$, a case of which has received more attention in the literature. Totally nondegenerate CR manifolds of codimension $k=1$ are actually real hypersurfaces in $\mathbb C^2$ investigated specifically by Chern and Moser in \cite{Chern-Moser}. In codimension $k=2$, namely for $4$-dimensional totally nondegenerate CR submanifolds of $\mathbb C^3$,  Beloshapka, Ezhov and Schmalz constructed in \cite{BES} the desired normal form. The main approach applied in this work is that of Chern-Moser and the issue of convergence is not studied there.

As the subsequent class in CR dimension one, our goal in this paper is to construct convergent normal forms for real-analytic $5$-dimensional totally nondegenerate submanifolds $M^5\subset\mathbb C^4$ in codimension $k=3$.  For this purpose, we apply the equivariant moving frames techniques as proposed in \cite{Olver-2018}. Biholomorphic equivalences between the elements of this class is investigated in \cite{5-cubic} where the desired invariants of this problem are found. It is discovered \cite[Proposition 2.1]{5-cubic} that, after certain elementary normalizations, each CR manifold $M^5$ can be represented in local coordinates $z, w^1, w^2, w^3$ of $\mathbb C^4$ with $w^j:=u_j+ i v^j$ by:
\begin{equation}
\label{normalization-5cubic}
\aligned
v^1&:=\Phi^1(z,\oz,u)=z\oz+{\rm O}(3),
\\
v^2&:=\Phi^1(z,\oz,u)=z^2\oz+z\oz^2+{\rm O}(4),
\\
v^3&:=\Phi^1(z,\oz,u)=i(z^2\oz-z\oz^2)+{\rm O}(4),
\endaligned
\end{equation}
where, after assigning the weights $[z]=[\oz]=1$, $[u_1]=2$ and $[u_2]=[u_3]=3$ to the variables $z, \oz, u_j, j=1, 2, 3$, then ${\rm O}(t)$ denotes the sum taken over the monomials of weights $\geq t$ in terms of these variables.

Our results gain in more interest if we realize that there are only two types of $5$-dimensional Levi nondegenerate generic CR manifolds, namely those of CR dimension $n=1$ and codimension $k=3$ in $\mathbb C^4$ and those of CR dimension $n=2$ and codimension $k=1$ in $\mathbb C^3$. The construction of normal forms for the latter class is almost concluded specifically in the works \cite{Ebenfelt-98, Loboda-01, Loboda-03} of Ebenfelt and Loboda. But still, there is no considerable work concerning the normal forms of the former class. In \cite{MSP}, this class is divided into two distinct subclasses ${\sf III}_1$ and ${\sf III}_2$. In this terminology, what we aim to consider here is the first class ${\sf III}_1$. Thus, our work can be regarded as the first investigation of normal forms for $5$-dimensional nondegenerate CR manifolds in $\mathbb C^4$.

The outline of this paper is as follows. In the next Section \ref{sec-moving-frame}, we present a brief description of equivariant moving frames for Lie pseudo-group actions. We also present some fundamental formulas and results arisen in this theory. In Section \ref{Sec-function}, we provide requisite materials for constructing the desired equivariant moving frame and normal form associated with the action of biholomorphic pseudo-group $\mathcal G$ on totally nondegenerate CR manifolds $M^5$, in question. In particular, we produce infinitesimal version of this pseudo-group which is necessary for launching the construction in Section \ref{section-NF}. Our main tool in this construction is the powerful recurrence formula. In Section \ref{section-NF}, we make a fourth order cross-section which provides us a {\it partial} normal form for totally nondegenerate CR manifolds $M^5$. We realize that, completing this achieved normal form shall be addressed in higher orders through various branches relying on vanishing and non-vanishing of some of its specific coefficients. This situation is analogues to that of the normal form (3.18) constructed by Chern-Moser in part (d), page 246 of \cite{Chern-Moser} for real hypersurfaces in $\mathbb C^2$, where its complete form relies upon vanishing and non-vanishing of the coefficient $c_{42}$. We encounter at the end of Section \ref{section-NF} three distinct Branches $1$, $2$ and $3$. These are the first but not the last branches which emerge along the way and it turns out soon that several further subbranches are await for us in the next orders. We display the appearing branches via the diagram below.

Size of the computations in orders $\geq 5$ groves explosively and we perform them with the aid of {\sc Maple} program. In the next two sections \ref{Sec-Branch-1} and \ref{Sec-Branch-2} we construct complete convergent normal forms for CR manifolds belonging to Branches $1$ and $2$. These normal forms are achieved in order five ({\it cf.} Theorems \ref{th-branch-1} and \ref{th-branch-2}). In Section \ref{Sec-Branch-3}, where we aim to construct the sought normal form in Branch $3$ we encounter, unpleasantly, several new branches and subbranches as are visible in the diagram. We defer the study of Branch 3-2 to another investigation, but a complete list of convergent normal forms in all other branches 3-1, 3-3-1-a, 3-3-1-b, 3-3-2 and 3-3-3 is provided. The normal form of Branch 3-1 is achieved in order five ({\it cf.} Theorem \ref{th-branch-3-order-5}) while for the next branches, it is found in orders six and seven ({\it cf.} Theorems \ref{th-branch-3-3-1}, \ref{th-branch-3-3-2-a} and \ref{th-branch-3-3-2-b}).

\begin{figure}[h]
\includegraphics[height=3.85cm, width=7 cm]{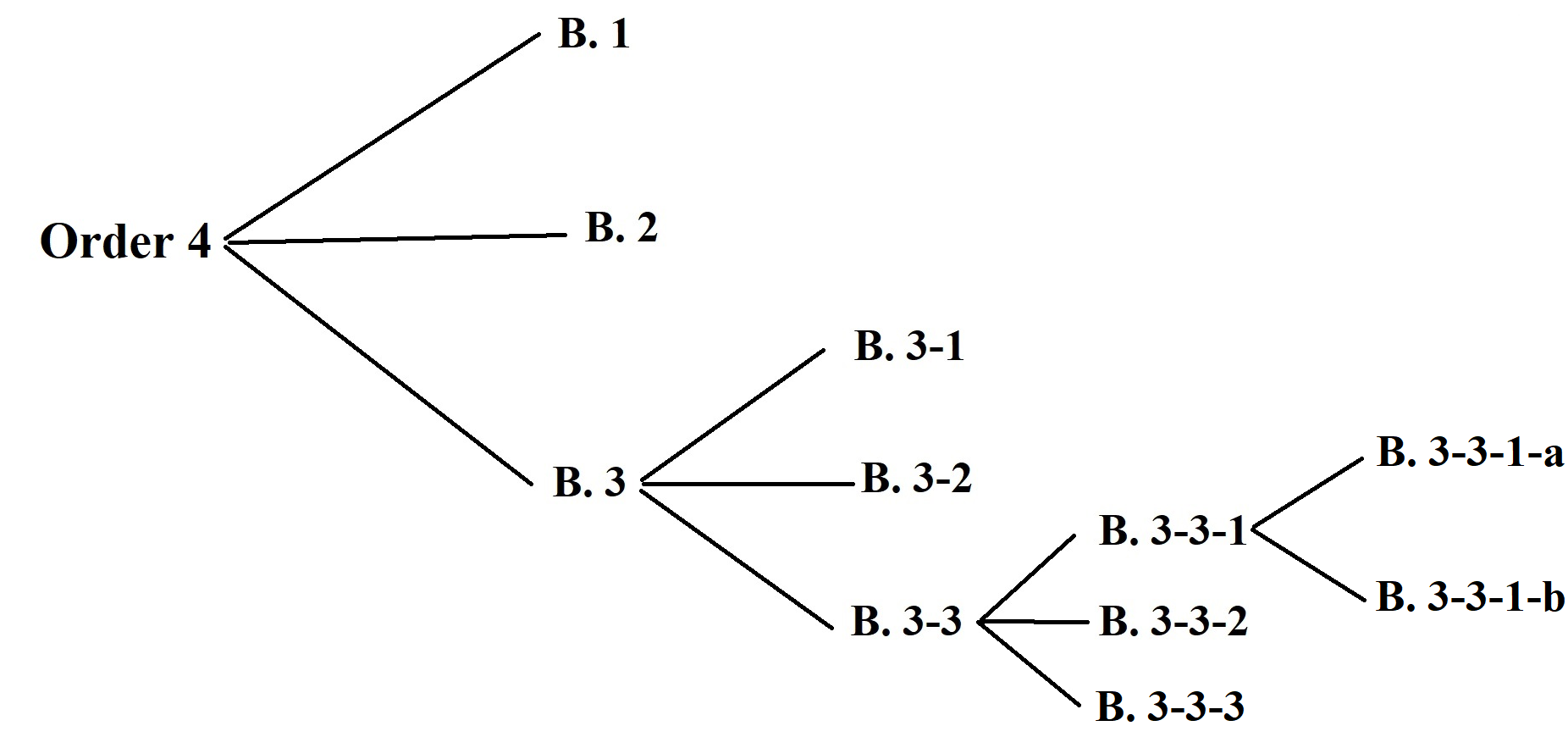}
\caption{The appearing branches}
\end{figure}

Let us collect together and exhibit concisely the main results of this paper.

\begin{Theorem}
Each $5$-dimensional real-analytic totally nondegenerate CR manifold $M^5\subset\mathbb C^4$ can be transformed holomorphically into the {\sl partial} normal form:
\begin{equation*}
\aligned
v^1&=z\oz+ \sum_{j+k+\sharp\ell\geq 5} \frac{1}{j!\,k!\,\ell!}\,V^1_{Z^j\oZ^k U^\ell} z^j \oz^k u^\ell,
\\
v^2&=\frac{1}{2}\,(z^2\oz+z\oz^2)+\sum_{j+k+\sharp\ell\geq 5} \frac{1}{j!\, k!\, \ell!}\,V^2_{Z^j\oZ^k U^\ell} z^j \oz^k u^\ell,
\\
v^3&=-\frac{i}{2}\,(z^2\oz-z\oz^2)+\frac{1}{6}\,V^3_{Z^3\oZ}\,z^3\oz+\frac{1}{6}\,V^3_{Z\oZ^3}\,z\oz^3+\frac{1}{2}\,V^3_{Z^2\oZ U_1} z^2\oz u_1+\frac{1}{2}\,V^3_{Z\oZ^2 U_1} z\oz^2 u_1
\\
& \ \ \ \ \ \ \ +\sum_{j+k+\sharp\ell\geq 5} \frac{1}{j! \, k! \, \ell!}\, V^3_{Z^j\oZ^k U^\ell} z^j \oz^k u^\ell,
\endaligned
\end{equation*}
where for $\ell=(l_1, l_2, l_3)\in\mathbb N^3$ we define $\ell!:=l_1!\, l_2!\, l_3!$ and $u^\ell:=u_1^{l_1} u_2^{l_2} u_3^{l_3}$.  Taking in account the conjugation relations $\overline{V^\bullet_{Z^j\oZ^k U^\ell}}=V^\bullet_{Z^k\oZ^j U^\ell}$, these coefficients enjoy the cross-section normalizations:
\begin{equation*}
\aligned
0&=V^1_{Z^j U^\ell}=V^1_{Z\oZ U^\ell}=V^1_{Z^2\oZ U^\ell}=V^1_{Z^2\oZ^2 U_2^j}=V^1_{Z^3\oZ U_2^j U_3^l},
\\
0&=V^2_{Z^j U^\ell}=V^2_{Z^j\oZ U^\ell}=V^2_{Z^2\oZ^2 U_2^j U_3^l},
\\
0&=V^3_{Z^j U^\ell}=V^3_{Z\oZ U^\ell}=V^3_{Z^2\oZ U_2^j U_3^l}=V^3_{Z^2\oZ^2 U_3^l},
\endaligned
\end{equation*}
for $j, l\in\mathbb N$. Completing this partial normal form relies upon vanishing and non-vanishing of some of its specific coefficient functions which causes splitting the process into several branches. The results are displayed concisely in the following table (cf. \cite[Theorem 1.3]{5-cubic}):
\begin{center}
\begin{tabular}{|c|c|c|c|c|}
  \hline
  % after \\: \hline or \cline{col1-col2} \cline{col3-col4} ...
  {\rm Branch} & {\rm Assumptions} & {\rm CNF} & {\rm dim.} & {\rm Equivalence Problem} \\
  \hline
  $1$ & $V^3_{Z^3\oZ}\neq 0$ & \eqref{normal-form-Branch-1} & $5$ & $\{e\}$-{\rm structure on the base manifold} $M^5$ \\
  \hline
  $2$ & $V^3_{Z^3\oZ}= 0$ {\rm and} $V^3_{Z^2\oZ U_1}\neq 0$ & \eqref{normal-form-branch-2} & $5$ & $\{e\}$-{\rm structure on the base manifold} $M^5$ \\
  \hline
  {\rm 3-1} & \specialcell{$V^3_{Z^3\oZ}= V^3_{Z^2\oZ U_1}= 0$, \ \  \ ${\bf A}\neq 0$  } & \eqref{normal-form-branch-3} & $5$ & $\{e\}$-{\rm structure on the base manifold} $M^5$ \\
  \hline
  {\rm 3-3-1} & \specialcell{ $\bf B$} & \eqref{normal-form-Branch-3-3-1} & $5$, $6$ & \specialcell{
$\{e\}$-{\rm structure either on $M^5$ or on a} \\ {\rm
$6$-dimensional prolonged space of it} } \\
  \hline
  {\rm 3-3-2} & \specialcell{${\bf C}$} & \eqref{normal-form-Branch-3-3-2-a} & $5$ & $\{e\}$-{\rm structure on the base manifold} $M^5$ \\
  \hline
  {\rm 3-3-3} & {\bf D} & \eqref{model-cubic-2} & $7$ & {\rm equivalence to the model} $M^5_{\sf c}$ \\
  \hline
\end{tabular}
\end{center}
\smallskip\noindent
where {\rm CNF} stands for the term "Complete Normal Form", where {\rm dim} denotes the dimension of the infinitesimal CR automorphism algebra $\frak{aut}_{CR}(M^5)$, where $\bf A$ is the collection of four lifted differential invariants visible in \eqref{A-3-1} and where ${\bf B}, {\bf C}, {\bf D}$ are the assumptions made respectively in \eqref{A-3-3-1}, \eqref{A-3-3-2-a} and \eqref{umbilical}. The origin-preserving holomorphic transformation which brings $M^5$ into its associated normal form is essentially unique except when $M^5$ belongs to one of the two branches {\rm 3-3-1-b} or {\rm 3-3-3}. In Branch {\rm 3-3-1-b}, it is unique up to the action of the $1$-dimensional isotropy group of the corresponding normal form and in Branch {\rm 3-3-3} it is unique up to the action of the $2$-dimensional isotropy group of the cubic model $M^5_{\sf c}$ at the origin.
\end{Theorem}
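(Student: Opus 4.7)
The plan is to exploit the equivariant moving frames machinery of Section \ref{sec-moving-frame} applied to the biholomorphic pseudo-group $\mathcal{G}$ acting on CR manifolds already prenormalized to the form \eqref{normalization-5cubic}. That preliminary reduction fixes the weighted $2$- and $3$-jets, so the real work concerns Taylor coefficients of weight $\geq 4$. First, using the infinitesimal presentation of $\mathcal{G}$ produced in Section \ref{Sec-function}, I would write out the recurrence formula describing how the prolonged group parameters act on the coefficients $V^{\bullet}_{Z^j\oZ^k U^\ell}$. Because of the weighted grading, at each fixed weight only finitely many parameters intervene, so the normalization reduces to a finite linear-algebraic problem solvable without explicit torsion expressions.

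Second, for each weight starting at $4$, I would select a cross-section that kills as many coefficients as possible while respecting the conjugation symmetry $\overline{V^{\bullet}_{Z^j\oZ^k U^\ell}} = V^{\bullet}_{Z^k\oZ^j U^\ell}$. Doing this \emph{uniformly}\,\,---\,\,without case splitting\,\,---\,\,yields exactly the list of vanishing coefficients claimed in the partial normal form, leaving essential the four order-$4$ coefficients $V^3_{Z^3\oZ}$, $V^3_{Z\oZ^3}$, $V^3_{Z^2\oZ U_1}$, $V^3_{Z\oZ^2 U_1}$ of $v^3$. Completion of the normal form then splits into branches according to whether these essential coefficients vanish, since their vanishing determines which residual group parameters are still available to normalize coefficients at orders $\geq 5$.

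Branch by branch, I would continue the recurrence into orders $5$, $6$ and at worst $7$, at each step identifying which parameters can be used to kill which higher-order coefficients. When enough independent invariants arise to determine all remaining parameters, the outcome is an $\{e\}$-structure on the base $M^5$ and $\dim\frak{aut}_{CR}(M^5)=5$; this covers Branches 1, 2, 3-1, 3-3-2 and 3-3-1-a. In Branch 3-3-1-b one parameter persists and a prolongation to a $6$-dimensional space is required, explaining the residual $1$-dimensional isotropy. In Branch 3-3-3, successive invariants all vanish, forcing $M^5$ to coincide with Beloshapka's cubic model $M^5_{\sf c}$; its $7$-dimensional $\frak{aut}_{CR}$ must then be verified directly and its $2$-dimensional isotropy at the origin accounts for the stated non-uniqueness of the normalizing map.

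The main obstacle is the branching structure itself: each normalization hinges on non-vanishing of polynomial invariants built from previously normalized coefficients, so at every stage one must examine the recurrence formula carefully to choose the correct cross-section and to determine whether new invariants ${\bf A}, {\bf B}, {\bf C}$ genuinely suffice to finish the process. The explosive growth of the computations for orders $\geq 5$ forces reliance on \textsc{Maple} to execute the recurrence, extract the coefficient of each weighted monomial in the transformed defining equations, and solve the resulting linear systems. The theoretical work interwoven with this symbolic computation is to interpret the output geometrically: recognizing which invariants remain, exploiting the automatic convergence provided by the infinitesimal viewpoint, and confirming that the residual isotropies in Branches 3-3-1-b and 3-3-3 match precisely the dimensions listed in the final column of the table.
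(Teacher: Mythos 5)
Your proposal follows essentially the same route as the paper: order-by-order normalization via the recurrence formula starting from the prenormalized form \eqref{normalization-5cubic}, a uniform cross-section through order four leaving the relative invariants $V^3_{Z^3\oZ}$, $V^3_{Z^2\oZ U_1}$ (and conjugates) as the branch-determining coefficients, Maple-assisted continuation into orders five through seven in each branch, and the same treatment of the residual isotropy in Branches 3-3-1-b and 3-3-3 via prolongation and the cubic model's two-dimensional stabilizer. This is exactly the structure of Sections \ref{section-NF}--\ref{Sec-Branch-3}, so no substantive difference to report.
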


It also turns out that, biholomorphically, Beloshapka's cubic model $M^5_{\sf c}$ represented by the defining equations \eqref{model-cubic} is the unique totally nondegenerate CR manifold with the maximum possible dimension $7$ of the corresponding Lie algebra $\frak{aut}_{CR}(M^5_{\sf c})$ ({\it cf.} Corollary \ref{cor}).

\section{Equivariant moving frames for Lie pseudo-group actions}
\label{sec-moving-frame}

The modern reformulation of the theory of moving frames is established first in the case of finite dimensional Lie group actions by Fels and Olver \cite{Olver-Fels-99}. Later on, Olver and Pohjanpelto \cite{Olver-Pohjanpelto-05, Olver-Pohjanpelto-08} extended it to the case of infinite dimensional Lie pseudo-group actions. In  this section, we present a brief description of equivariant moving frames theory in the latter case, as we aim to apply it in the next sections.

We fix $M$ throughout this section as a $m$-dimensional smooth manifold with local coordinates $x=(x^1, \cdots, x^m)$ and let $\mathcal D(M)$ be the pseudo-group of local diffeomorphisms $\varphi: M\longrightarrow M$. We denote by $\mathcal D^{(n)}(M)$ the bundle of $n$-th order jets of $\mathcal D(M)$. We also denote by the capital letters $X:=(X^1, \ldots, X^m)$ the {\it target coordinates} of $M$ under diffeomorphisms $\varphi\in\mathcal D(M)$, {\it i.e.} $X:=\varphi(x)$. Assume that $\mathcal G\subset\mathcal D(M)$ is a Lie psudo-subgroup acting on $M$. Naturally, $\mathcal G^{(n)}\subset\mathcal D^{(n)}(M)$ denotes the collection of all $n$-th order jets of diffeomorphisms belonging to $\mathcal G$.  As is known in local coordinates ({\it cf. \cite{Olver-2005}}) and for each integer $n$, $\mathcal G^{(n)}$ is identified as the solution of some system of partial differential {\it determining equations}:
\begin{equation}
\label{det-eq-pseu-group}
F^{(n)}(x, X^{(n)})=0,
\end{equation}
where the components $X_J$ of the $n$-th order jet coordinates $X^{(n)}$, for $\sharp J\leq n$, represent the partial derivatives $\partial^J\varphi/\partial x^J$ for $\varphi\in\mathcal G$.

Every Lie psedo-group $\mathcal G\subset\mathcal D(M)$ can be characterized by its corresponding algebra $\frak g$ of locally defined vector fields on $M$ whose flows belong to $\mathcal G$. Such vector fields are called the {\it infinitesimal generators} of $\mathcal G$. Set up the {\it lift} of an arbitrary vector field:
\[
{\bf v}:=\sum_{i=1}^m\,\xi^i(x)\,\frac{\partial}{\partial x^i}
\]
of $M$ as the vector field:
\[
{\bf V}:=\sum_{i=1}^m\sum_{\sharp J\geq 0}\,\mathbb D^J_x\xi^i(X)\,\frac{\partial}{\partial x^i_J},
\]
defined on the diffeomorphism jet bundle $\mathcal D^{(\infty)}(M)$ where:
\[
\mathbb D_{x^i}:= \frac{\partial}{\partial x^i}+\sum_{j=1}^m\sum_{\sharp J\geq 0}\,X^j_{J,i} \frac{\partial}{\partial X^j_J} \qquad i=1, \cdots, m
\]
are total derivative operators and for each multi-index $J$ with $\sharp J=k$, we set $\mathbb D^J_x:=\mathbb D_{x^{j_1}}\ldots \mathbb D_{x^{j_k}}$. Then, the above vector field $\bf v$ belongs to the infinitesimal Lie algebra $\frak g$ if its vector components $\xi:=(\xi^1, \ldots, \xi^m)$ satisfy the {\it infinitesimal determining equations}:
\begin{equation}
\label{infinitesimal-det-eq}
L^{(n)}(x, \xi^{(n)}):={\bf V}[F^{(n)}(x, X^{(n)})]|_{{\bf 1}^{(n)}}
\end{equation}
where $F^{(n)}(x, X^{(n)})$ is the $n$-th order determining equation \eqref{det-eq-pseu-group} of $\mathcal G$ and ${\bf 1}^{(n)}$ is the $n$-th order jet of the identity transformation ({\it see} \cite{Olver-1995, Olver-2005} for more details).

In agreement with the notations introduced above and for each $i=1, \ldots, m$, set:
\begin{equation}
\label{sigma-mu}
\aligned
\sigma^i&:=\sum_{j=1}^m\,X^i_j\,dx^j \qquad {\rm and} \qquad \mu^i:=dX^i-\sum_{j=1}^m\,X^i_j\,dx^j.
\endaligned
\end{equation}
In this terminology, the 1-forms $\sigma^i$ and $\mu^i$ are called the {\it invariant horizontal} and {\it zeroth order Maurer-Cartan} forms, respectively. Dual to the forms $\sigma^1, \ldots, \sigma^m$, we have differential operators:
\[
\mathbb D_{X^i}=\sum_{j=1}^m\,\mathbb W^j_i\, \mathbb D_{x^j} \qquad i=1, \ldots, m,
\]
where $\mathbb W^j_i:=\mathbb W^j_i(x, X^{(1)})$ is the $(j,i)$-th entry of the inverse matrix $(X^i_j)^{-1}$. For $i=1, \ldots, m$ and associated to each multi-index $J$ with $k=\sharp J$, we define the $k$-th order Maurer-Cartan form:
\[
\mu^i_J:=\mathbb D^j_X\mu^i.
\]
These right invariant 1-forms $\mu^{(\infty)}:=(\mu^1, \ldots, \mu^m, \ldots, \mu^i_J, \ldots)$ generate all Maurer-Cartan forms of the diffeomorphism pseudo-group $\mathcal D^{(\infty)}(M)$. This collection together with the horizontal forms $\sigma^1, \ldots, \sigma^m$, makes a right-invariant coframe on $\mathcal D^{(\infty)}(M)$ with the associated structure equations ({\it see} \cite{Olver-2005, Valiquette-11}):
\begin{gather*}
\label{st-eq-sigma-mu}
d\sigma^i = \sum_{j=1}^m \mu^i_j \wedge \sigma^j, \\
d\mu^i_J = \sum_{j = 1}^m \sigma^j \wedge \mu^i_{J,j} + \sum_{j = 1}^m \sum_{\stackrel{I+K=J}{\#K\geq 1}}\binom{J}{I} \mu^i_{I,j}\wedge \mu^j_K,
\end{gather*}
where $I+K$ is the component-wise addition of multi-indices in $\mathbb{N}^m$ and, when it is equal to $J$, we define $\binom{J}{I} = \frac{J!}{I!K!}$.
Up to order two, the first few structure equations are:

\begin{equation}
\label{struc-eq-MC}
\aligned
d\mu^i  =  \sum_{j=1}^m\,\sigma^j \wedge \mu^i_j, \qquad {\rm and} \qquad
d\mu^i_k = \sum_{j=1}^m\,\sigma^j \wedge \mu^i_{kj} + \sum_{j=1}^m\,\mu^i_j \wedge \mu^j_k.
\endaligned
\end{equation}

When we restrict the Maurer-Cartan forms $\mu^{(\infty)}$ to a pseudo-subgroup $\mathcal G$ of $\mathcal D^{\infty}(M)$, it expectedly appears among them some linear dependencies. By \cite[Theorem 6.1]{Olver-2005}  and in each order $n$, these linear dependencies can be detected through the linear system:
\[
L^{(n)}(X, \mu^{(n)})=0,
\]
where $L^{(n)}$ is the infinitesimal determining system \eqref{infinitesimal-det-eq} of $\mathcal G$. This provides one with the opportunity of finding a complete right-invariant coframe on the Lie pseudo-subgroup $\mathcal G^{(\infty)}$, as well.

Now, let us view this fragment of the theory from a broader perspective by considering $n$-th order jet bundle ${\rm J}^n(M, p), 0\leq n\leq\infty$, consisting of equivalence $p$-dimensional submanifolds $S$ of $M$ under the contact form equivalence. Splitting local coordinates $x^1, \ldots, x^m$ of $M$ into independent and dependent variables $x^1, \ldots, x^p$ and $u^1, \ldots, u^q$ with $p+q=m$, every submanifold $S$ can be represented locally as the graph of some $q$ defining equations $u^\alpha:=u^\alpha(x^1,\ldots,x^p), \alpha=1,\ldots, q$. Accordingly, the local coordinates of ${\rm J}^n(M, p)$ are represented by independent variables $x^1, \ldots, x^p$ and dependent jet variables $u^{(n)}:=u^\alpha_J$ for $\alpha=1, \ldots, q$ and for multi-indices $J$ with $\sharp J\leq n$.
Consider again $\mathcal G\subset\mathcal D(M)$ as a Lie pseudo-group acting on $M$ with the induced  prolonged action $\mathcal G^{(n)}$ on ${\rm J}^n(M, p)$ \cite{Olver-1995}. Let $\mathcal H^{(n)}$ be the groupoid obtained by pulling back the pseudo-group $\mathcal G^{(n)}$ via the canonical projection $\pi:{\rm J}^n(M, p)\rightarrow M$. Also let  $d_{\rm J}$ denotes the differentiation with respect to only jet coordinates. Substituting each jet coordinate $x^i, u^\alpha, u^\alpha_{x^i}, u^\alpha_{x^ix^j}, \ldots$ and each standard form $dx^i, du^\alpha, du^\alpha_{x^i}, du^\alpha_{x^ix^j}, \ldots$ with its corresponding $X^i, U^\alpha, U^\alpha_{X^i}, U^\alpha_{X^iX^j}, \ldots$ and $d_{\rm J}X^i, d_{\rm J}U^\alpha, d_{\rm J}U^\alpha_{X^i}, d_{\rm J}U^\alpha_{X^iX^j}, \ldots$, every arbitrary jet form $\Omega$ defined on ${\rm J}^{(n)}(M, p)$ {\it lifts} to a certain form $\lambda(\Omega)$ on $\mathcal H^{(n)}$. We call $\lambda$ the {\it lifting operator}.

Aiming for a concrete formula to compute the expression of standard lifted variables $U^\alpha_J$ in terms of the source jet variables $(x, u^{(\infty)})$,  consider first the total derivations on all group and jet variables:
\[
D_{x^j}:=\mathbb D_{x^j}+\sum_{\alpha=1}^q\,\bigg(u^\alpha_j \mathbb D_{u^\alpha}+\sum_{\sharp J\geq 1}\,u^{\alpha}_{J, j} \frac{\partial}{\partial u^\alpha_J}\bigg), \qquad j=1,\ldots, p.
\]
Corresponding to each $D_{x^i}$, let us set up the total differential operator:
\begin{equation}
\label{D-X-i}
D_{X^i}=\sum_{j=1}^p\,W^j_i D_{x^j}
\end{equation}
where $(W^j_i)=(D_{x^j} X^i)^{-1}$ is the inverse of the total Jacobian matrix. These differential operators serve to compute desired expression of the lifted coordinates as:
\[
U^\alpha_J=D^J_X \, U^\alpha:=D_{X^{j_1}}\cdots D_{X^{j_k}} U^\alpha.
\]
Moreover, the above total differential operators $D_{x^j}$ make the {\it lifted horizontal coframe} consisting of the forms:
\begin{equation}
\label{omega-i}
\omega^i:=\sum_{j=1}^p\, D_{x^j} X^i\, dx^j \qquad i=1, \ldots, p.
\end{equation}

Roughly, after providing an equivariant moving frame, this coframe plays the role of an invariant coframe on the submanifolds, in question. In the language of Cartan's classical theory of equivalence \cite{Olver-1995}, it has the role of {\it lifted coframe} associated with the $\mathcal G$-equivalence problem between the submanifolds $S$.

\subsection{Equivariant moving frame}

Now, we are ready to define the equivariant moving frame associated with the action of a Lie pseudo-group $\mathcal G$ on the manifold $M$.

\begin{Definition}
A {\sl moving frame} of order $n$ is a map $\rho^{(n)}:{\rm J}^n(M, p)\rightarrow\mathcal H^{(n)}$, where for each jet point $z^{(n)}=(x, u^{(n)})$, the source point of $\rho^{(n)}(z^{(n)})$ is $z^{(n)}$ and moreover it enjoys the {\it equivariant condition}:
\[
\rho^{(n)}(g^{(n)}\cdot z^{(n)})=\rho^{(n)}(z^{(n)})\cdot (g^{n})^{-1}
\]
for each $g^{(n)}\in\mathcal G^{(n)}|_z$ with the inverse $(g^{(n)})^{-1}$ where $z=\pi(z^{(n)})$.
\end{Definition}

Local existence of such a moving frame map is guaranteed by {\it regularity} and {\it freeness} of the action. By definition \cite{Olver-Pohjanpelto-05}, an action is regular when its corresponding orbits have the same dimension and moreover there are arbitrarily small neighborhoods whose intersection with each orbit is a connected subset thereof. The ($n$-th order) freeness means that at every point $z^{(n)}\in {\rm J}^n(M, p)$, the {\it isotropy subgroup}:
\[
\mathcal G^{(n)}_{z^{(n)}}:=\{g^{(n)}\in\mathcal G^{(n)}|_{z}: \ g^{(n)}\cdot z^{(n)}=z^{(n)}\}
\]
is trivial. However, even in the case that the underlying action is not free\,\,---\,\,as is customary for example in the case of actions underly many standard equivalence problems\,\,---\,\,it is still possible to construct a weaker version of moving frames, called by the {\it partial moving frame} ({\it see} \cite{Valiquette-SIGMA} for more details). Here, we do not aim to touch on this case.

Constructing a $n$-th order moving frame completely relies upon choosing a {\it cross-section} to the pseudo-group orbits which is a transverse submanifold of the complementary dimension. By the regularity and freeness of the action and for every point $p\in M$, there exists a {\it unique} transformation $g\in\mathcal G$ which maps $p$ to the cross-section. Let us describe the most simple, but practical type of cross-sections named by {\it coordinate} cross-section. Let $\mathcal G^{(n)}$ acts freely and regularly on the open subset $\mathcal V^n$ of ${\rm J}^n(M, p)$. The regularity of this action permits us to assume that the dimension of its orbits at each jet point $z^{(n)}=(x, u^{(n)})$ is constant, say $r_n$. This means that $\mathcal H^{(n)}$ can be coordinatized locally by the jet coordinates $x, u^{(n)}$ and $r_n$ {\it group parameters} $g^{(n)}=g_1, \ldots, g_{r_n}$. Now, one may pick some $r_n$ lifted jet coordinates $X, U^{(n)}$, renamed by $F_1, \ldots, F_{r_n}$, to define the {\it normalization equations}:
\begin{equation}
\label{cross-section}
F_1(x, u^{n}, g^{(n)})=c_1, \qquad \ldots \qquad F_{r_n}(x, u^{(n)}, g^{(n)})=c_{r_n},
\end{equation}
for some appropriate integers $c_1, \ldots, c_{r_{n}}$. Solving this system for the group parameters as unknowns, enables one to express each of them in terms of the jet coordinates, say $g_i=\gamma_i(x, u^{(n)})$ for $i=1, \ldots, r_n$. Accordingly, one defines the desired $n$-order moving frame by:
\[
\rho^{(n)}(x, u^{(n)}):=(x, u^{(n)}, \gamma(x, u^{(n)})).
\]
If $f_1, \ldots, f_{r_n}$ are jet coordinates with the lifts $F_1, \ldots, F_{r_n}$, then $f_1=c_1, \ldots, f_{r_n}=c_{r_n}$ is the cross-section associated to this moving frame.

\begin{Definition}
A differential function $I:{\rm J}^n(M, p)\rightarrow\mathbb R$ is a {\sl differential invariant} of order $n$ if it is unaffected by the action of $\mathcal G^{(n)}$, {\it i.e.}
\[
I(g^{(n)}\cdot (x, u^{(n)}))=I(x, u^{(n)})
\]
for each $g^{(n)}\in\mathcal G^{(n)}$  and $(x, u^{(n)})\in {\rm J}^n(M, p)$.
\end{Definition}

Once one succeeds to construct a complete moving frame $\rho^{(\infty)}:{\rm J}^\infty(M, p)\rightarrow\mathcal H^{(\infty)}$, it appears the crucial {\it invariantization operator} $\iota$ which converts every differential function or form to its invariantized counterpart. More precisely, for each arbitrary form $\Omega$ of ${\rm J}^\infty(M,p)$, the {\it invariantization} $\iota(\Omega)$ is defined as:
\[
\iota(\Omega):=(\rho^{(\infty)})^\ast(\lambda(\Omega))
\]
where $\lambda$ is the lifting operator, defined before. In particular, if we invariantize those jet coordinates $x^i, u^\alpha_J$ which do not appear in the corresponding cross-section then, one receives a {\it complete} system of differential invariants of order $\leq n$. Such differential invariants are termed {\it non-phantom (or basic) invariants}. Roughly speaking and in agreement with the above notations, they can be obtained by inserting $g_i=\gamma_i(x, u^{(n)})$, resulted by the cross-section, into the expressions of the lifted jet coordinates. {\it Even more}, if the action of $\mathcal G$ on $M$ is projectable meaning that $X^i_{u^\alpha}=0$ for each $i=1, \ldots, p$ and $\alpha=1, \ldots, q$ then, the differential operators $\mathcal D_i, i=1, \ldots, p$ dual to the invariantized 1-forms $\iota(\omega^i)$ ({\it see} \eqref{omega-i}) transform each order $n$ differential invariant to a differential invariant of higher order $n+1$.

\subsection{Recurrence formula}

The recurrence formula is one of the fundamental tools in the equivariant moving frames theory which relates differential invariants and invariant forms to their normalized counterparts defined by the corresponding cross-section. Moreover, it completely determines structure of the associated algebra of differential invariants. The point that brings this formula to much applications is that it is established using purely infinitesimal information, requiring only linear algebra and differentiation. In particular, it does not need explicit expressions of neither Maurer-Cartan forms nor lifted differential invariants.

Consider the infinitesimal generators:
\[
{\bf v}:=\sum_{i=1}^p\,\xi^i(x, u)\, \frac{\partial}{\partial x^i}+\sum_{\alpha=1}^q\,\phi^\alpha(x, u)\, \frac{\partial}{\partial u^\alpha}
\]
of the action of Lie pseudo-group $\mathcal G$ on $M$. As is known \cite{Olver-1995}, the prolonged action of $\mathcal G^{(\infty)}$ on ${\rm J}^\infty(M, p)$ is specified by the {\it prolonged infinitesimal generators}:
\[
{\bf v}^{(\infty)}=\sum_{i=1}^p\,\xi^i(x, u)\, \frac{\partial}{\partial x^i}+\sum_{\alpha=1}^q\sum_{k:=\sharp J\geq 0}\,\phi^{\alpha; J}(x, u^{(k)})\, \frac{\partial}{\partial u^\alpha_J},
\]
where the vector components $\phi^{\alpha; J}$ are defined recursively by the {\it prolongation formula}:
\[
\phi^{\alpha; J, j}=D_{x^j} \phi^{\alpha; J}-\sum_{i=1}^p\, (D_{x^j} \xi^i)\, u^\alpha_{J, i}.
\]

Artificially \cite{Olver-Pohjanpelto-05}, the lifts of the jets $\xi^i_J$ and $\phi^\alpha_J$  are defined to be the corresponding Maurer-Cartan forms ({\it cf. \eqref{sigma-mu}}):
\begin{equation}
\label{lift-comp}
\lambda(\xi^i_J):=\mu^i_J \qquad {\rm and} \qquad \lambda(\phi^\alpha_J):=\mu^\alpha_J.
\end{equation}

\medskip
\noindent
{\bf Recurrence Formula.} ({\it cf.} \cite[Theorem 25]{Olver-Pohjanpelto-08}) If $\Omega$ is a differential form defined on ${\rm J}^\infty(M, p)$ then:
\begin{equation*}
d\iota(\Omega)=\iota\big[d\Omega+{\bf v}^{(\infty)}(\Omega)\big]
\end{equation*}
 where ${\bf v}^{(\infty)}(\Omega)$ denotes the Lie derivative of $\Omega$ along ${\bf v}^{(\infty)}$.
\smallskip

This formula measures the distance between the invariantization of a jet form differentiation with the differential of its invariantization.  In particular, by taking $\Omega$ to be one of the standard jet coordinates $x^i$ or $u^\alpha_J$, then one receives modulo contact forms:
 \begin{equation}
 \label{Rec-Rel-General}
 \aligned
 dX^i&\equiv\omega^i+\mu^i,
 \\
 dU^\alpha_J&\equiv \sum_{j=1}^p\,U^\alpha_{J, j} \,\omega^j+\lambda(\phi^{\alpha; J}),
 \endaligned
 \end{equation}
 where, {\it by the customary abuse of notations}, the same symbols $U^\alpha_J$ and $\omega^j$ are used for their invariantizations $\iota(U^\alpha_J)$ and $\iota(\omega^i)$.

 \subsection{Normal form construction}

Let $S\subset M$, acted on by the pseudo-group $\mathcal G$, be a real-analytic $p$-dimensional submanifold represented in local coordinates as the graph of some defining vector equations $u:=u(x)$ for $x=x^1, \ldots, x^p$ and $u=u^1, \ldots, u^q$. At a reference point $z_0:=(x_0, u(x_0))=(x_0, u_0)$, we identify the Taylor series expansion:
\[
u(x)=\sum_{k=\sharp J\geq 0} \, \frac{1}{j_1!\,\cdots j_k!}\, u_{J}(x_0) \,(x-x_0)^J
\]
 of the defining equations with the restriction of jet coordinates to the point $z_0$, {\it i.e.}:
 \[
 u^{(\infty)}|_{z_0}=(x_0, u_0, u_J(x_0), \cdots).
 \]
Let $\rho$ be a complete moving frame associated with the prolonged action $\mathcal G^{(\infty)}$ on ${\rm J}^{(\infty)}(M, p)$ equipped with a cross-section $z^{(\infty)}_0$, where $\pi(z^{(\infty)}_0)=z_0$. By convenient abuse of notations, let us denote by the same $X, U_J$ the invariantization of the lifted coordinates. Then, through the already mentioned identification, the Taylor jet coordinate $u^{(\infty)}|_{z_0}$ is transformed to its corresponding invariantized jet $U^\alpha_J$ at the reference point $z^{(\infty)}_0$. It follows from the way of defining moving frame maps that such transformation is done by a {\it unique} element $g\in\mathcal G$. Furthermore, by the assumption $\pi(z_0^{(\infty)})=z_0$, made on the cross-section $z_0^{(\infty)}$, this transformation maps the reference point $z_0$ to itself. These new coefficients build the Taylor series expansion of the desired normal form of $S$ as:
\[
\sum_{k=\sharp J\geq 0} \, \frac{1}{j_1!\,\cdots j_k!}\, U_{J}(z^{(\infty)}_0)\, (x-x_0)^J.
\]

We refer the reader to  \cite{Olver-2018} for further relevant information and details.

\section{Preliminary materials}
\label{Sec-function}

Before applying the moving frame method, we need to provide some requisite materials. In particular, we need to find the infinitesimal counterpart of the Lie  pseudo-group $\mathcal G$ consisting of biholomorphic maps $\mathbb C^4\rightarrow\mathbb C^4$. This will help us to launch the crucial recurrence formula in the next sections which results in normalizing Maurer-Cartan forms, as much as possible.

By definition \cite{Krantz}, an invertible map $(z,w^1, w^2, w^3)\mapsto (Z, W^1, W^2, W^3)$ with $Z:=Z(z,w)$ and $W^j:=W^j(z,w), j=1,2,3$
  belongs to $\mathcal G$ whenever:
\begin{equation}
\label{eq: zw determining equations}
Z_{\oz} = Z_{\ow^k} = W^j_{\oz} = W^j_{\ow^k} = 0
\end{equation}
for each $j,k=1,2,3$. With the expansion\footnote{There is some technical reason for indicating a bit unnaturally the indices $j$ as $u_j$ and $v^j$.} $w^j:=u_j + iv^j$, let us expand the complex functions $W^j$ to their real and imaginary parts as:
\begin{equation*}
W^j=W^j(z,u,v) := U_j(z,\oz,u,v) + iV^j(z,\oz,u,v).
\end{equation*}
Assuming $\oZ = \overline{Z(z,u,v)}, \oW^j=\overline{W^j(z,u,v)}$ and thanks to the well-known relation $\pp{}{\ow} = \frac{1}{2} (\pp{}{u} + i \pp{}{v})$, equation \eqref{eq: zw determining equations} also implies that:
\[
\oZ_z=\oW^j_z=0,\qquad Z_{v^j}=iZ_{u_j},\qquad \oZ_{v^j}=-i\oZ_{u_j},\qquad W^k_{v^j}=iW^k_{u_j},\qquad \oW^k_{v^j}=-i\oW^k_{u_j},
\]
for each $j, k= 1, 2, 3$. Keeping the two equations $U_j=\frac{1}{2} (W^j+\overline W^j)$ and $V^j=\frac{i}{2} (\overline W^j- W^j)$ in mind, it is easy to verify that:
\[
\frac{\partial U_j}{\partial z} = iV^j_z, \qquad \frac{\partial U_j}{\partial \oz} = -i V^j_{\oz}, \qquad \frac{\partial U_k}{\partial v^j}=-V^k_{u_j}, \qquad V^k_{v^j}=\frac{\partial U_k}{\partial u_j}.
\]
Summing up, then an invertible map $(z, w)\mapsto (Z, W)$ belongs to the pseudo-group $\mathcal G$ whenever it satisfies the system of {\it determining equations} ({\it cf.} \eqref{det-eq-pseu-group}):
\begin{equation}
\label{eq: determining equations}
\aligned
Z_{\oz} = \oZ_z = 0,\qquad Z_{v^j} &= iZ_{u_j},\qquad \oZ_{v^j} = -i\oZ_{u_j},
\\
\frac{\partial U_k}{\partial z} = iV^k_z,\qquad \frac{\partial U_k}{\partial \oz}&=-iV^k_{\oz},\qquad \frac{\partial U_k}{\partial v^j}=-V^k_{u_j},\qquad V^k_{v^j} = \frac{\partial U_k}{\partial u_j}, \qquad j, k= 1, 2, 3.
\endaligned
\end{equation}

Let $\mathfrak{g}$ denotes the local Lie algebra of infinitesimal generators of $\mathcal G$, that is the collection of locally defined vector fields whose flows belong to $\mathcal G$ ({\it cf.} \cite{Olver-1995, Olver-Pohjanpelto-05}). We introduce the real vector field:
\begin{equation}\label{eq: v}
\vv = \xi(z,u,v)\pp{}{z} + \overline{\xi}(\oz,u,v)\pp{}{\oz} + \sum_{j=1}^3 \eta^j(z,\oz,u,v)\pp{}{u_j} + \sum_{j=1}^3\phi^j(z,\oz,u,v)\pp{}{v^j},
\end{equation}
where $\overline{\xi}(\oz,u,v):=\overline{\xi(z,u,v)}$, $\eta^j:=\frac{1}{2}(\zeta^j+\overline\zeta^j)$ and $\phi^j:=\frac{i}{2}(\overline\zeta^j-\zeta^j)$ for some holomorphic functions $\zeta^j(z,w)$, $j=1,2,3$. By linearizing the determining equations \eqref{eq: determining equations} at the identity transformation, one verifies that $\vv$ belongs to the Lie algebra\footnote{Since our interest is specifically to biholomorphisms which leave invariant our totally nondegenerate CR manifolds, then we may identify the Lie algebra $\frak g$ with its real part ({\it see} \cite[Propositions 12.4.25 and  12.4.26]{BER}).} $\mathfrak{g}$ if and only if its vector components satisfy the following \emph{infinitesimal determining equations} for $j, k=1, 2, 3$ ({\it cf.} \eqref{infinitesimal-det-eq}):
\begin{equation}\label{eq: infinitesimal determining equations}
\begin{gathered}
\xi_{\oz}=\oxi_z=0,\qquad \xi_{v^j} = i\,\xi_{u_j},\qquad \oxi_{v^j}=-i\,\oxi_{u_j},
 \\
\phi^k_z = -i\,\eta^k_z,\qquad \phi^k_{\oz} = i\,\eta^k_{\oz},\qquad \phi^k_{u_j} = - \eta^k_{v^j},\qquad \phi^k_{v^j}=\eta^k_{u_j}.
\end{gathered}
\end{equation}

By differentiating these equations, we also obtain the following relations among the second order jets of the vector field coefficients:
\begin{equation}
\label{eq: 2nd order jet relations}
\begin{gathered}
\xi_{\oz,a} = 0,\qquad \xi_{zv^j} = i\,\xi_{zu_j},\qquad \xi_{u_jv^k} = i\,\xi_{u_ju_k},\qquad \xi_{v^jv^k} = -\xi_{u_ju_k},
\\
\oxi_{z,a} = 0, \qquad  \oxi_{\oz v^j} = -i\,\oxi_{\oz u_j},\qquad \oxi_{u_jv^k} = -i\,\oxi_{u_ju_k},\qquad \oxi_{v^jv^k} = -\oxi_{u_ju_k},
\\
\eta^k_{z\oz}=0,  \qquad \eta^k_{v^jv^r}=-\eta^k_{u_ju_r}, \qquad \eta^k_{zv^j}=i\, \eta^k_{zu_j},\qquad
\eta^k_{\oz v^j} = -i\, \eta^k_{\oz u_j},
\\
\phi^k_{z,a} = -i\,\eta^k_{z,a},\qquad \phi^k_{\oz,a} = i\, \eta^k_{\oz,a},\qquad \phi^k_{u_j,a} = -\eta^k_{v^j,a},\qquad
\phi^k_{v^j,a} = \eta^k_{u_j,a}
\end{gathered}
\end{equation}
for $j, k, r=1, 2, 3$.

Now, let us turn our attention to the jet space ${\rm J}:={\rm J}^{(\infty)}(\mathbb R^8=\mathbb C^4, 5)$ of $5$-dimensional real submanifolds. This entails us to split the coordinates into $z, \oz, u_1, u_2, u_3$ as independent and $v^1, v^2, v^3$ as dependent variables. The appearing local jet coordinates are of the form $v^\alpha_J:= v^\alpha_{z^j \oz^k u_1^{r_1} u_2^{r_2} u_3^{r_3}}$ for $j, k, r_1, r_2, r_3\in\mathbb N$, for $\alpha=1, 2, 3$ and for multi-indices $J$ with $\sharp J\geq 0$. It is convenient to verify that the conjugate relation $\overline{v^\alpha_{z^j \oz^k v^r}}=v^\alpha_{z^k \oz^j v^r}$ holds also in higher order jets.

\subsection{Maurer-Cartan forms}

As explained in Section \ref{sec-moving-frame} and corresponding to the variables $z, \oz, u_j, v^j, j=1, 2, 3$, we have zeroth order Maurer-Cartan forms $\mu^z, \mu^{\oz}, \mu^{u_j}, \mu^{v^j}$. As a consequence of the equation \eqref{sigma-mu}, it is easy to check that $\mu^{\oz}=\overline{\mu^z}$ and also to check that $\mu^{u_j}$ and $\mu^{v^j}$ are real forms. For simplicity, let us make some changes in the notations as:
\[
\mu:=\mu^z, \qquad \overline\mu:=\mu^{\oz}, \qquad \alpha^j:=\mu^{u_j}, \qquad \gamma^j:=\mu^{v^j}, \qquad j=1, 2, 3.
\]

Since here we deal only with the holomorphic Lie pseudo-subgroup $\mathcal G$ of $\mathcal D(\mathbb C^4)$ and as stated in Section \ref{sec-moving-frame}, it is anticipated the appearing some linear dependencies among these Maurer-Cartan forms. Such dependencies can be detected by lifting ({\it cf.} \eqref{lift-comp}):
\[
\lambda(\xi_J)=\mu_J ,\qquad \lambda(\oxi_J)=\omu_J ,\qquad \lambda(\eta^j_J)=\alpha^j_J ,\qquad \lambda(\phi^j_J)=\gamma^j_J, \qquad \sharp J\geq 0
\]
of the vector components of $\bf v$ into the infinitesimal determining equations \eqref{eq: infinitesimal determining equations} and \eqref{eq: 2nd order jet relations}. This, provides the relations:
\begin{equation}
\label{MC-relations}
\aligned
\mu_{\oZ} &= \omu_Z = 0,\qquad \mu_{V^j}=i\,\mu_{U_j},\qquad \omu_{V^j} = -i\,\omu_{U_j},
\\
\gamma^k_Z &= -i\, \alpha^k_Z,\qquad \gamma^k_{\oZ}=i\,\alpha^k_{\oZ},\qquad
\gamma^k_{U_j} = - \alpha^k_{V^j},\qquad \gamma^k_{V^j} = \alpha^k_{U_j}.
\\
\mu_{\oZ,a}&= 0,\qquad \mu_{ZV^j} = i\,\mu_{ZU_j},\qquad \mu_{U_jV^r} = i\,\mu_{U_jU_r},\qquad \mu_{V^jV^r}=-\mu_{U_jU_r},
\\
\omu_{Z,a} &= 0,\qquad \omu_{\oZ V^j} = -i\,\omu_{\oZ U_j},\qquad \omu_{U_jV^r}=-i\,\omu_{U_jU_r},\qquad \omu_{V^jV^r} = -\omu_{U_jU_r},
\\
\alpha^k_{Z\oZ}&=0,\qquad \alpha^k_{V^jV^r} = -\alpha^k_{U_jU_r},\qquad \alpha^k_{ZV^j} = i\, \alpha^k_{ZU_j},\qquad \alpha^k_{\oZ V^j} = -i\, \alpha^k_{\oZ U_j},
\\
\gamma^k_{Z,a} &= -i\,\alpha^k_{Z,a},\qquad \gamma^k_{\oZ,a} = i\, \alpha^k_{\oZ,a},\qquad
\gamma^k_{U_j,a} = - \alpha^k_{V^j,a},\qquad \gamma^k_{V^j,a} = \alpha^k_{U_j,a}.
\endaligned
\end{equation}

Thus, a basis of Maurer--Cartan forms is provided by the invariant group forms:
\begin{equation}
\label{Basis-MC-original}
\mu_{Z^kU^\ell},\qquad \omu_{\oZ^k U^\ell},\qquad \alpha^j_{U^\ell V^r}, \qquad \alpha^j_{Z^k U^\ell},\qquad \alpha^j_{\oZ^k U^\ell},\qquad \gamma^k,
\end{equation}
for $j, r=1, 2, 3$, for $k\in\mathbb N$ and for $\ell=(\ell_1, \ell_2, \ell_3)\in\mathbb N^3$ where $U^\ell$ denotes $U_1^{\ell_1} U_2^{\ell_2} U_3^{\ell_3}$.

\subsection{Structure equations}

Henceforth and in order to present our expressions in a simpler fashion, we present them {\it occasionally} by Einstein's summation convention. Associated with the independent variables $z, \oz, u_1, u_2, u_3$ of each $5$-dimensional submanifold $M\subset\mathbb C^4$ represented as the graph of three defining equations $v^j=f^j(z, \oz, u_1, u_2, u_3), j=1, 2, 3$, we have the {\it lifted horizontal coframe} consisting of the five 1-forms $\omega^z, \omega^{\oz}, \omega^k:=\omega^{u_k}, k=1, 2, 3$ with the expressions ({\it cf. } \eqref{omega-i}):
\begin{equation*}
\aligned
\omega^z &= D_z Z\, dz + D_{\oz} Z\, d\oz + D_{u_j}Z\, du_j
\\
&=(Z_z + Z_{v^j} f^j_z)\,dz + Z_{v^j} f^j_{\oz}\, d\oz + (Z_{u_j} + Z_{v^k} f^k_{u_j})\, du_j
 \\
&=(Z_z + i\,Z_{u_j} f^j_z)\, dz + i\,Z_{u_j} f^j_{\oz}\, d\oz + (Z_{u_j} + i\,Z_{u_k} f^k_{u_j})\, du_j,
\\
\omega^{\oz} &= \overline{\omega^z},
\\
\omega^k &= D_z U_k\, dz + D_{\oz}U_k\, d\oz + D_{u_j} U_k\, du_j
\\
&= ({U_k}_z + {U_k}_{v^j} f^j_z)\, dz + ({U_k}_{\oz}+{U_k}_{v^j} f^j_{\oz})\, d\oz + ({U_k}_{u_j} + {U_k}_{v^r} f^r_{u_j})\, du_j.
\endaligned
\end{equation*}
 In matrix form, if we put the coefficients of $dz, d\oz, du_j, j=1, 2, 3$ of the above equations into a $5\times 5$ matrix $A$ so that:
\[
[\omega^z, \omega^{\oz}, \omega^1, \omega^2, \omega^3]^t=A\cdot [dz, d\oz, du_1, du_2, du_3]^t
\]
then, the {\it lifted total derivative operators} are given by ({\it cf.} \eqref{D-X-i}):
\begin{equation*}\label{eq: lifted derivative operators}
\begin{bmatrix}
D_Z \\ D_{\oZ} \\ D_{U_j}
\end{bmatrix}
= A^{-T}
\begin{bmatrix}
D_z \\ D_{\oz} \\ D_{u_j}
\end{bmatrix}.
\end{equation*}
These operators serve to compute the expressions of the lifted jet coordinates $V^\alpha_{Z^j \oZ^k U^\ell}=D_Z^j D_{\oZ}^k D_{U}^\ell V^\alpha$ for $\alpha=1, 2, 3$, though unfortunately, the computations are complicated in practice, specifically in higher orders. We also notice that our lifted jet coordinates also respect the conjugation, {\it i.e.} $\overline{V^\alpha_{Z^j \oZ^k U^\ell}}=V^\alpha_{Z^k \oZ^j U^\ell}$.

Modulo contact forms\footnote{Contact forms play no essential role in this study.}, the lifted horizontal forms $\omega^z, \omega^{\oz}, \omega^j, j=1, 2, 3$, match up to their corresponding invariant horizontal forms $\sigma^z, \sigma^{\oz}, \sigma^j$ introduced in \eqref{sigma-mu}. Therefore, according to  \eqref{st-eq-sigma-mu}, they admit the structure equations:
\begin{equation}
\label{eq: order zero structure equations}
\begin{aligned}
d\omega^z &= \mu_Z\wedge \omega^z + \mu_{\oZ}\wedge \omega^{\oz} + \mu_{U_j}\wedge \omega^j+ \mu_{V^j}\wedge \big(V^j_Z\,\omega^z+V^j_{\oZ}\,\omega^{\oz}+V^j_{U_k}\,\omega^k\big)
\\
&= \mu_Z\wedge \omega^z + \mu_{U_j}\wedge \omega^j+ i\,\mu_{U_j}\wedge (V^j_Z\,\omega^z+V^j_{\oZ}\,\omega^{\oz}+V^j_{U_k}\,\omega^k),
\\
d\omega^{\oz} &= \overline{d\omega^z},
\\
d\omega^k &= \alpha^k_Z \wedge \omega^z + \alpha^k_{\oZ}\wedge \omega^{\oz} + \alpha^k_{U_j}\wedge \omega^j + \alpha^k_{V^j} \wedge (V^j_Z\,\omega^z+V^j_{\oZ}\,\omega^{\oz}+V^j_{U_r}\,\omega^r), \qquad k=1,2,3.
\end{aligned}
\end{equation}

\subsection{Recurrence relations}

The prolongation of the infinitesimal generator \eqref{eq: v} of $\mathcal G$ is given by:
\begin{equation}
\label{v-infty}
\vv^{(\infty)} = \xi \frac{\partial}{\partial z} + \oxi \frac{\partial}{\partial \oz} + \sum_{j=1}^3 \eta^j \frac{\partial}{\partial u_j} + \sum_{\alpha=1}^3\sum_{k=\sharp J\geq 0} \phi^{\alpha;J}\frac{\partial}{\partial v^\alpha_J},
\end{equation}
where the vector components $\phi^{\alpha;J}$ are defined recursively by the prolongation formula:
\begin{equation}\label{prolong-formula}
\phi^{\alpha; J,x^j}=D_{x^j}\phi^{\alpha;J}-(D_{x^j}\xi) \,v^\alpha_{z, J}-(D_{x^j}\overline\xi) \,v^\alpha_{J, \oz}-\sum_{k=1}^3\,(D_{x^j}\eta^k)\, v^\alpha_{J, u_k},
\end{equation}
for $x^j=z, \oz, u_1, u_2, u_3$.
Accordingly, the recurrence relations in our case are represented as ({\it cf.} \eqref{Rec-Rel-General}):
\begin{equation}
\label{rec-formula}
\aligned
dZ&=\omega^z+\mu, \qquad dZ=\omega^{\oz}+\overline\mu,
\\
dU_j&=\omega^j+\alpha^j,
\\
dV^\alpha_J&=V^\alpha_{J, z}\, \omega^z+ V^\alpha_{J, \oz}\, \omega^{\oz}+\sum_{j=1}^3 \, V^\alpha_{J, u_j}\, \omega^j+\lambda(\phi^{\alpha; J}),
\endaligned
\end{equation}
for $j, \alpha=1, 2, 3$ and $\sharp J\geq 0$, where $\lambda(\phi^{\alpha; J})$ is the lift of the vector coefficient $\phi^{\alpha; J}$ ({\it cf.} \eqref{lift-comp}).

\section{Normal form construction}
\label{section-NF}

After providing preliminary requirements in the last section, now we are ready to launch the construction of convergent normal forms for 5-dimensional totally nondegenerate CR manifolds $M^5\subset\mathbb C^4$. As stated before, our strategy is to build an appropriate moving frame for the action of the biholomorphic Lie pseudo-group $\mathcal G$ on theses CR manifolds which runs quite parallel to construct desired normal forms  \cite{Olver-2018}. As is customary in moving frame theory \cite{Valiquette-11, Valiquette-SIGMA}, we do this task by normalizing {\it order by order} the appearing differential invariants. This strategy is based fundamentally on the above mentioned recurrence formula.

\subsection{Order Zero}

According to \eqref{rec-formula}, the recurrence formula in this order gives:
\[
dZ = \omega^z + \mu, \qquad
d\oZ = \omega^{\oz} + \omu,\qquad
dU_j= \omega^j + \alpha^j, \qquad
dV^j = V^j_Z\omega^z+V^j_{\oZ}\omega^{\oz}+V^j_{U_j}\omega^j + \gamma^k
\]
for $j=1, 2, 3$. By the discussion presented at the end of Section \ref{sec-moving-frame} and since we aim to construct our sough normal form at the origin, as the reference point, with some {\it origin-preserving} holomorphic transformation then, we choose the normalizations (order zero cross-section):
\[
Z = \oZ = U_j = V^j = 0
\]
which, according to the above recurrence relations, brings the order zero Maurer-Cartan forms as:
\begin{equation*}
\label{eq: order zero mc form normalizations}
\mu = -\omega^z,\qquad
\omu = -\omega^{\oz},\qquad
\alpha^k = -\omega^k,\qquad
\gamma^k =-V^k_Z\,\omega^z-V^k_{\oZ}\,\omega^{\oz}-V^k_{U_j}\,\omega^j .
\end{equation*}

\subsection{Order One}

The elementary normalization \eqref{normalization-5cubic} indicates that the coefficients of $z, \oz, u_j$ in the power series defining equations of $M^5\subset\mathbb C^4$ remain vanished under holomorphic transformations. Then, we may have the cross-section normalisation:
\[
V^k_Z=V^k_{\oZ}=V^k_{U_j}=0,
\]
for $j, k=1, 2, 3$. Consequently, the order one recurrence relations take the form (henceforth we do not present the recurrence relations of the conjugated lifted invariants due to the fact that $dV_{\overline J}=\overline{dV_J}$):
\begin{equation*}
\label{eq: order one recurrence relations}
\begin{aligned}
0=dV^k_Z &=V^k_{ZZ}\,\omega^z+V^k_{Z\oZ}\,\omega^{\oz}+ V^k_{ZU_j}\,\omega^j-i\,\alpha^k_Z,
\\
0=dV^k_{U_j} &= V^k_{ZU_j}\,\omega^z+V^k_{\oZ U_j}\,\omega^{\oz}+ V^k_{U_j U_r}\,\omega^r-\alpha^k_{V^j}.
\end{aligned}
\end{equation*}
This implies that:
\[
\aligned
\alpha^k_Z=-i\,V^k_{ZZ}\,\omega^z-i\,V^k_{Z\oZ}\,\omega^{\oz}-i\, V^k_{Z,j}\,\omega^j, &\qquad \alpha^k_{\oZ}=i\,V^k_{Z\oZ}\,\omega^{z}+i\,V^k_{\oZ \oZ}\,\omega^{\oz}+i\, V^k_{\oZ,j}\,\omega^j,
\\
\alpha^k_{V^j}=V^k_{ZU_j}\,\omega^z&+V^k_{\oZ U_j}\,\omega^{\oz}+ V^k_{U_j U_r}\,\omega^r,
\endaligned
\]
for $j, k=1,2,3$.

Among performing the above computations and for each $k=1, 2, 3$, we observed that the first order prolonged vector coefficient $\phi^{k;z}$ ({\it see} \eqref{v-infty}) includes the term $1\eta^k_z$. This, according to the prolongation formula \eqref{prolong-formula}, implies that for each integer $j\geq 0$ and each triple $\ell\in\mathbb N^3$, we can see the term $1\eta^k_{z^{j+1} u^\ell}$ in the higher order prolonged vector coefficient $\phi^{k;z^{j+1} u^\ell}$. Since the lift of this term is $\lambda(\eta^k_{z^{j+1} u^\ell})=\alpha^k_{Z^{j+1} U^\ell}$, then we may find through the recurrence relation of $dV^k_{Z^{j+1} U^\ell}$ a nonzero constant coefficient of the Maurer-Cartan form $\alpha^k_{Z^{j+1} U^\ell}$. Thus, by the normalization $V^k_{Z^{j+1} U^\ell}=0$, one may specify this Maurer-Cartan form. Likewise, the term $-1\eta^k_{v^j}$ is visible in the prolonged vector coefficient $\phi^{k,u_j}$ for $j=1, 2, 3$. Then, similar argument shows that one can specify each Maurer-Cartan form $\alpha^k_{U^\ell V^j}, \ell\in\mathbb N^3$ by applying the normalization $V^k_{U^\ell U_j}=0$. Thus, we have

\begin{Lemma}
\label{lem-observ-1}
For $k=1, 2, 3$ and for each integer $j\geq 0$ and triple $\ell\in\mathbb N^3$, it is always possible
\begin{itemize}
  \item[$1.$] to specify the Maurer-Cartan forms $\alpha^k_{Z^{j+1}U^\ell}$ by normalizing $V^k_{Z^{j+1}U^\ell}=0$.
  \item[$2.$] to specify the Maurer-Cartan forms $\alpha^k_{U^\ell V^j}$ by normalizing $V^k_{U^\ell U_j}=0$.
\end{itemize}
\end{Lemma}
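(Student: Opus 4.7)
\medskip
\noindent
\textbf{Proof proposal.} The plan is to inspect the prolongation formula \eqref{prolong-formula} recursively and show that each of the two Maurer--Cartan forms in question appears with a \emph{nonzero constant coefficient} in the lift $\lambda(\phi^{k;J})$ of the appropriate prolonged vector field component; once this is established, the corresponding recurrence relation \eqref{rec-formula} for $dV^k_J$ can be solved (as a linear equation in that Maurer--Cartan form) whenever we impose the cross-section normalization $V^k_J=0$ (and hence $dV^k_J=0$).

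First I would treat part (1). For $J=z$, the prolongation formula together with the infinitesimal determining equation $\phi^k_z=-i\,\eta^k_z$ from \eqref{eq: infinitesimal determining equations} shows that at the previously fixed order-zero and order-one cross-section (where all $V^\alpha, V^\alpha_Z, V^\alpha_{\oZ}, V^\alpha_{U_j}$ vanish) the quantity $\phi^{k;z}$ picks up $-i\,\eta^k_z$ as its ``principal'' summand, all other summands involving strictly lower-order derivatives of the vector coefficients. The general case is then handled by induction on the multi-index $(j{+}1,\ell)$: assuming $\phi^{k;z^{j+1}u^\ell}$ contains $c\,\eta^k_{z^{j+1}u^\ell}$ with $c\in\mathbb{C}^\ast$ modulo lower-order material, applying one more total derivative $D_z$ or $D_{u_r}$ produces $c\,\eta^k_{z^{j+2}u^\ell}$ or $c\,\eta^k_{z^{j+1}u^{\ell+e_r}}$ respectively, since the correction terms coming from $-(D_{x^j}\xi)v^\alpha_{z,J}$, $-(D_{x^j}\bar\xi)v^\alpha_{J,\bar z}$ and $-(D_{x^j}\eta^r)v^\alpha_{J,u_r}$ all carry a factor of a lifted jet coordinate that has already been normalized to zero, or involve $\eta^k$-derivatives of strictly lower total order. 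Lifting via \eqref{lift-comp} converts this leading summand into $c\,\alpha^k_{Z^{j+1}U^\ell}$; then the recurrence relation for $dV^k_{Z^{j+1}U^\ell}$ reads
\begin{equation*}
dV^k_{Z^{j+1}U^\ell}=\text{(horizontal part)}+c\,\alpha^k_{Z^{j+1}U^\ell}+\text{(lower-order Maurer--Cartan forms)},
\end{equation*}
so the cross-section $V^k_{Z^{j+1}U^\ell}=0$ allows one to solve for $\alpha^k_{Z^{j+1}U^\ell}$.

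Part (2) is entirely parallel: the infinitesimal determining equation $\phi^k_{u_j}=-\eta^k_{v^j}$ produces the seed term $-\eta^k_{v^j}$ inside $\phi^{k;u_j}$, and iterated application of $D_{u_r}$ propagates it to a term $(-1)^{|\ell|+1}\,\eta^k_{u^\ell v^j}$ (up to combinatorial factors) inside $\phi^{k;u^\ell u_j}$, whose lift is a nonzero constant multiple of $\alpha^k_{U^\ell V^j}$. The corresponding cross-section $V^k_{U^\ell U_j}=0$ then pins down $\alpha^k_{U^\ell V^j}$ from the recurrence for $dV^k_{U^\ell U_j}$.

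The main obstacle I anticipate is not an algebraic difficulty but a \emph{bookkeeping} one: one must verify carefully that no unexpected cancellation occurs among the many correction terms generated by $D_{x^j}\xi$, $D_{x^j}\bar\xi$ and $D_{x^j}\eta^r$ at each inductive step, and in particular that each such correction either contributes to a strictly lower-order $\eta$-derivative or carries a factor of a jet variable already normalized to zero by the cross-section choices fixed in Section \ref{section-NF}. Once this is verified, the coefficient of the targeted top-order derivative of $\eta^k$ remains a power of $\pm i$ (hence nonzero), the lifted recurrence relation becomes linearly solvable in the relevant Maurer--Cartan form, and the claimed specifications follow without any further geometric input.
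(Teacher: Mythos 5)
Your argument is essentially the paper's own: the paper likewise seeds the computation with the terms $\phi^k_z=-i\,\eta^k_z$ (via \eqref{eq: infinitesimal determining equations}) inside $\phi^{k;z}$ and $-\eta^k_{v^j}$ inside $\phi^{k;u_j}$, propagates them through the prolongation formula \eqref{prolong-formula} to conclude that $\alpha^k_{Z^{j+1}U^\ell}$ (resp. $\alpha^k_{U^\ell V^j}$) enters the recurrence relation of $dV^k_{Z^{j+1}U^\ell}$ (resp. $dV^k_{U^\ell U_j}$) with a nonzero constant coefficient, and then solves for that Maurer--Cartan form once the normalization $V^k_{Z^{j+1}U^\ell}=0$ (resp. $V^k_{U^\ell U_j}=0$) is imposed. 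The difference is only presentational: the paper states this as a symbolic observation, whereas your inductive bookkeeping (every competing same-order contribution is multiplied by a first-order lifted jet coordinate already normalized to zero) is precisely the verification the paper leaves implicit.
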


\begin{Remark}
Normalizations mentioned in the above lemma are well-known in CR geometry. Indeed, they correspond to the fact that the pluriharmonic terms are always removable from the defining equations of generic CR manifolds ({\it see e.g} \cite[Lemma 3.2]{Chern-Moser}).
\end{Remark}

\subsection{Order Two}

Again, a glance at the elementary normalization \eqref{normalization-5cubic} indicates that in this order we are permitted to normalize:
\[
V^2_{Z\oZ}=V^3_{Z\oZ}=0, \qquad V^1_{Z\oZ}=1.
\]
By Lemma \ref{lem-observ-1}, these differential invariants are the only ones, we shall consider in this order. Taking this lemma into account,
we have the recurrence relations:
\[
\aligned
0=dV^1_{Z\oZ}&=V^1_{Z^2\oZ}\,\omega^z+V^1_{Z\oZ^2}\,\omega^{\oz}+V^1_{Z\oZ U_j}\,\omega^j+\alpha^1_{U_1}-(\mu_Z+\overline\mu_{\oZ}),
\\
0=dV^r_{Z\oZ}&=V^r_{Z^2\oZ}\,\omega^z+V^r_{Z\oZ^2}\,\omega^{\oz}+V^r_{Z\oZ U_j}\,\omega^j+\alpha^r_{U_1},
\endaligned
\]
for $r=1,2$. This implies that:
\begin{gather*}
\aligned
\alpha^1_{U_1}&=-V^1_{Z^2\oZ}\omega^z-V^1_{Z\oZ^2}\omega^{\oz}-V^1_{Z\oZ U_j}\omega^j+(\mu_Z+\overline\mu_{\oZ}),
\\
\alpha^r_{U_1}&=-V^r_{Z^2\oZ}\omega^z-V^r_{Z\oZ^2}\omega^{\oz}-V^r_{Z\oZ U_j}\omega^j.
\endaligned
\end{gather*}

Before concluding this order, we notice that in the expression of the prolonged vector coefficient $\phi^{k,z\oz}, k=1,2,3$, one observes the term $v^1_{z\oz}\eta^k_{u_1}$. Thus, by an argument similar to that of the proof of Lemma \ref{lem-observ-1}, one finds

\begin{Lemma}
\label{lem-observ-2}
For $k=1, 2, 3$ and for each triple $\ell\in\mathbb N^3$, it always is possible to specify the Maurer-Cartan forms $\alpha^k_{U_1U^\ell}$ by normalizing to zero lifted differential invariants $V^k_{Z\oZ U^\ell}$.
\end{Lemma}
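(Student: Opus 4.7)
The plan is to mimic verbatim the strategy of Lemma \ref{lem-observ-1}, now starting from the observation recorded immediately before the statement: the prolonged vector coefficient $\phi^{k;z\oz}$ contains the term $v^1_{z\oz}\,\eta^k_{u_1}$. First I would apply the prolongation formula \eqref{prolong-formula} iteratively with respect to $u_1, u_2, u_3$ in order to compute $\phi^{k;z\oz u^\ell}$ for a generic multi-index $\ell=(l_1,l_2,l_3)\in\mathbb N^3$, but only keeping track of the descendants of the distinguished summand $v^1_{z\oz}\,\eta^k_{u_1}$. Since the total derivations $D_{u_j}$ satisfy a Leibniz rule, among the terms produced by $D_{u^\ell}\bigl(v^1_{z\oz}\,\eta^k_{u_1}\bigr)$ there appears the single term $v^1_{z\oz}\,\eta^k_{u_1 u^\ell}$, and this is the only summand of $\phi^{k;z\oz u^\ell}$ whose lift can contribute the Maurer--Cartan form $\alpha^k_{U_1 U^\ell}$.

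Next, invoking the lifting rule $\lambda(\eta^k_{u_1 u^\ell})=\alpha^k_{U_1 U^\ell}$ from \eqref{lift-comp}, I would write out the recurrence relation for $dV^k_{Z\oZ U^\ell}$ supplied by \eqref{rec-formula}. By the order zero normalizations $Z=\oZ=U_j=V^j=0$ together with the order two normalization $V^1_{Z\oZ}=1$, the coefficient of $\alpha^k_{U_1 U^\ell}$ inside $\lambda(\phi^{k;z\oz u^\ell})$, when restricted to the partial moving frame built so far, collapses to the nonzero constant $1$. All other Maurer--Cartan forms appearing in the same recurrence relation either have already been specified in lower orders (via the zeroth/first/second order normalizations and via Lemma \ref{lem-observ-1}), or are lifted jets whose coefficients sit in $\mathbb K[V^{\bullet}_{\cdots}]$ and do not multiply $\alpha^k_{U_1 U^\ell}$.

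Imposing the cross-section normalization $V^k_{Z\oZ U^\ell}=0$ therefore yields a linear equation in which $\alpha^k_{U_1 U^\ell}$ enters with a unit coefficient, hence uniquely solvable for $\alpha^k_{U_1 U^\ell}$ as a linear combination of the horizontal forms $\omega^z,\omega^{\oz},\omega^j$ with coefficients that are polynomials in the already normalized/lifted jet variables. The anticipated obstacle is purely a bookkeeping one: I must verify that no other branch of the iterated prolongation $D_{u^\ell}$ acting on the remaining summands of $\phi^{k;z\oz}$ reintroduces a term proportional to $\eta^k_{u_1 u^\ell}$ that could cancel the leading coefficient $1$. Inspection of \eqref{prolong-formula} shows this cannot happen, since any competing contribution would need to differentiate the jet factor $v^1_{z\oz}$ rather than the infinitesimal coefficient $\eta^k_{u_1}$, producing strictly distinct Maurer--Cartan forms after lifting, exactly as in the argument for Lemma \ref{lem-observ-1}.
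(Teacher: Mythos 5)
Your proposal is essentially the paper's own proof: the paper likewise observes the term $v^1_{z\oz}\,\eta^k_{u_1}$ inside $\phi^{k;z\oz}$, propagates it through the prolongation formula to find $v^1_{z\oz}\,\eta^k_{u_1u^\ell}$ inside $\phi^{k;z\oz u^\ell}$, and uses $V^1_{Z\oZ}=1$ so that $\alpha^k_{U_1U^\ell}$ enters the recurrence relation of $dV^k_{Z\oZ U^\ell}$ with unit coefficient, whence the normalization $V^k_{Z\oZ U^\ell}=0$ specifies it. The only minor overstatement is your claim that all other Maurer--Cartan forms in the relation are already specified: for $k=1$ the forms $\mu_{ZU^\ell}$ and $\omu_{\oZ U^\ell}$ also appear and are only normalized at order three, but this does not affect solvability, since $\alpha^1_{U_1U^\ell}$ is still isolated with coefficient $1$ and is expressed in terms of these yet-unnormalized forms, exactly as in the paper.
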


\subsection{Order Three}

Keeping still in mind the elementary defining equations \eqref{normalization-5cubic}, here we normalize to zero all lifted invariants $V_J$ with $\sharp J=3$ except:
\[
V^2_{Z^2\oZ}=V^2_{Z\oZ^2}=1, \qquad V^3_{Z^2\oZ}=-i, \qquad V^3_{Z\oZ^2}=i.
\]
These normalizations together with those chosen in the previous order, are actually imposed by the total nondegeneracy of the CR manifolds $M^5$, in question.
Taking into accounts the general observations, found in the preceding orders, here we have the simplified recurrence relations:
\[
\aligned
0=dV^1_{Z^2\oZ}&=V^1_{Z^3\oZ}\,\omega^z+V^1_{Z^2\oZ^2}\,\omega^{\oz}+V^1_{Z^2\oZ j}\,\omega^j+4i\,\overline\mu_{U_1}+\alpha^1_{U_2}-i\,\alpha^1_{U_3}-\mu_{ZZ},
\\
0=dV^2_{Z^2\oZ}&=V^2_{Z^3\oZ}\,\omega^z+V^2_{Z^2\oZ^2}\,\omega^{\oz}+V^2_{Z^2\oZ j}\,\omega^j-\overline\mu_{\oZ}-2\,\mu_{Z}+\alpha^2_{U_2}-i\,\alpha^2_{U_3},
\\
0=dV^3_{Z^2\oZ}&=V^3_{Z^3\oZ}\,\omega^z+V^3_{Z^2\oZ^2}\,\omega^{\oz}+V^3_{Z^2\oZ j}\,\omega^j+i\,\overline\mu_{\oZ}+2i\,\mu_{Z}+\alpha^3_{U_2}-i\,\alpha^3_{U_3}.
\endaligned
\]
Then, our cross-section gives rise to the following expressions of the Maurer-Cartan forms:
\[
\aligned
\mu_{U_1}&=-\frac{i}{4} \big(V^1_{Z^2\oZ^2}\,\omega^z+V^1_{Z\oZ^3}\,\omega^{\oz}+V^1_{Z\oZ^2 j}\,\omega^j+\alpha^1_{U_2}+i\,\alpha^1_{U_3}-\overline\mu_{\oZ \,\oZ}\big),
\\
\mu_Z&=\frac{1}{6} \big(3\, V^2_{Z^3\oZ}-3\, V^2_{Z^2\oZ^2}+i\, V^3_{Z^3\oZ}-i\, V^3_{Z^2\oZ^2}\big)\, \omega^z + \frac{1}{6} \big(3\, V^2_{Z^2\oZ^2}-3\, V^2_{Z\oZ^3}+i\, V^3_{Z^2\oZ^2}-i\, V^3_{Z\oZ^3}\big)\, \omega^{\oz}
\\
&+\frac{1}{6} \big(3\, V^2_{Z^2\oZ j}-3\, V^2_{Z\oZ^2j}-i\, V^3_{Z\oZ^2j}+i\, V^3_{Z^2\oZ j}\big) \,\omega^j+\frac{1}{3} \alpha^3_{U_3}-i\, \alpha^2_{U_3},
\\
\alpha^2_{U_2}&=\frac{1}{2}\big(i\,V^3_{Z^3\oZ}-V^2_{Z^2\oZ^2}-V^2_{Z^3\oZ}-i\,V^3_{Z^2\oZ^2}\big)\, \omega^z + \frac{1}{2} \big(i\,V^3_{Z^2\oZ^2}-V^2_{Z^2\oZ^2}-V^2_{Z\oZ^3}-i\,V^3_{Z\oZ^3}\big)\, \omega^{\oz}
\\
&+\frac{1}{2} \big(i\,V^3_{Z^2\oZ j}-V^2_{Z^2\oZ j}-V^2_{Z\oZ^2j}-i\,V^3_{Z\oZ^2j}\big)\, \omega^j+\alpha^3_{U_3}
\\
\alpha^3_{U_2}&=\frac{1}{2}\big(i\, V^2_{Z^2\oZ^2}-i\, V^2_{Z^3\oZ}-V^3_{Z^3\oZ}-V^3_{Z^2\oZ^2}\big)\,\omega^z+\frac{1}{2}\big(i\, V^2_{Z\oZ^3}-i\, V^2_{Z^2\oZ^2}-V^3_{Z^2\oZ^2}-V^3_{Z\oZ^3}\big) \,\omega^{\oz}
\\
&+\frac{1}{2} \big(i\, V^2_{Z\oZ^2j}-i\, V^2_{Z^2\oZ j}-V^3_{Z^2\oZ j}-V^3_{Z\oZ^2j}\big)\, \omega^j-\alpha^2_{U_3}.
\endaligned
\]

\begin{Lemma}
\label{lem-observ-3}
Let $j, l\geq 0$ and $\ell\in\mathbb N^3$. One can specify
\begin{itemize}
\item[$1.$] the Maurer-Cartan form $\mu_{U_1U^\ell}$ by normalizing to zero the lifted differential invariant $V^1_{Z^{2}\oZ U^\ell}$.
 \item[$2.$] the Maurer-Cartan form $\mu_{Z^{j+1}U^\ell}$ by normalizing to zero the lifted differential invariant $V^2_{Z^{j+2}\oZ U^\ell}$.
  \item[$3.$] the real Maurer-Cartan forms $\alpha^2_{U_2^{j+1} U_3^l}$ and $\alpha^3_{U_2^{j+1} U_3^l}$ by normalizing to zero the lifted differential invariant $V^3_{Z^{2}\oZ U_2^{j} U_3^l}$.
 \end{itemize}
\end{Lemma}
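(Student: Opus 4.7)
The plan is to follow exactly the pattern set by Lemmas~\ref{lem-observ-1} and \ref{lem-observ-2}: for each of the three parts, isolate inside the lift $\lambda(\phi^{\alpha;J})$ of the prolonged vector coefficient a specific summand that reduces, after imposition of the already-placed normalizations, to a nonzero constant multiple of the target Maurer-Cartan form. Substituting this into the recurrence relation \eqref{rec-formula} for $dV^\alpha_J$ and setting $V^\alpha_J=0$ (or, equivalently when $V^\alpha$ is real-valued, setting its conjugate to zero) then algebraically determines the Maurer-Cartan form. The three base cases, all with $\sharp J=3$, have already been carried out in the order-three calculation immediately preceding the lemma: one reads off the coefficient $4i$ in front of $\overline\mu_{U_1}$ in $dV^1_{Z^2\oZ}$, the coefficient $-2$ in front of $\mu_Z$ in $dV^2_{Z^2\oZ}$, and, together with the conjugate relation, a nonsingular linear system that yields $\alpha^2_{U_2}$ and $\alpha^3_{U_2}$ from the simultaneous normalizations of $V^2_{Z^2\oZ}$ and $V^3_{Z^2\oZ}$.

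For the inductive step I would apply the iterated total derivatives $D_{u_1}^{\ell_1}D_{u_2}^{\ell_2}D_{u_3}^{\ell_3}$ to the prolongation formula \eqref{prolong-formula}, and likewise $D_z^{j}$ in part~(2) and $D_{u_2}^{j}D_{u_3}^{l}$ in part~(3), retaining modulo contact forms only those summands whose lifts survive under the cross-section and the normalizations of Lemmas~\ref{lem-observ-1}--\ref{lem-observ-2}. The distinguished summand responsible for the $4i\,\overline\mu_{U_1}$ term at the base propagates, under this iteration, to a term $4i\,\overline\mu_{U_1 U^\ell}$ inside $\lambda(\phi^{1;z^2\oz u^\ell})$, giving part~(1) after taking complex conjugates. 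The same mechanism, now with the additional iteration $D_z^{j}$, carries the coefficient $-2$ to a term $-2\,\mu_{Z^{j+1}U^\ell}$ inside $\lambda(\phi^{2;z^{j+2}\oz u^\ell})$, giving part~(2). Finally, the order-three nonsingular block responsible for $\alpha^2_{U_2}$ and $\alpha^3_{U_2}$ is promoted by $D_{u_2}^{j}D_{u_3}^{l}$ to the analogous nonsingular block occurring in the pair $dV^2_{Z^2\oZ U_2^j U_3^l}$, $dV^3_{Z^2\oZ U_2^j U_3^l}$; coupled with the normalization of Lemma~\ref{lem-observ-2} for $V^2_{Z^2\oZ U_2^j U_3^l}$, the normalization $V^3_{Z^2\oZ U_2^j U_3^l}=0$ then specifies both $\alpha^2_{U_2^{j+1}U_3^l}$ and $\alpha^3_{U_2^{j+1}U_3^l}$ by inverting this $2\times 2$ block, exactly as at order three.

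The principal obstacle is the bookkeeping of the many correction terms $-(D_{x^j}\xi)\,v^\alpha_{z,J}-(D_{x^j}\overline\xi)\,v^\alpha_{J,\oz}-\sum_k(D_{x^j}\eta^k)\,v^\alpha_{J,u_k}$ produced by each application of \eqref{prolong-formula}. Most of them involve factors of $V^\alpha_Z$, $V^\alpha_{\oZ}$ or $V^\alpha_{U_j}$ and therefore vanish on the cross-section; the few that survive pair already-specified Maurer-Cartan forms (supplied by Lemmas~\ref{lem-observ-1}--\ref{lem-observ-2} and by the order-three solutions displayed above) with the nonzero constants $V^1_{Z\oZ}=1$, $V^j_{Z^2\oZ}$, $V^j_{Z\oZ^2}$, so that they are absorbed into the $\omega^i$-coefficients and leave the leading coefficient of the target Maurer-Cartan form intact. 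The delicate point is to verify that no hidden cancellation arises at this leading coefficient during the induction; this can be checked using the structural identities \eqref{MC-relations} together with the infinitesimal determining equations \eqref{eq: infinitesimal determining equations}--\eqref{eq: 2nd order jet relations}, and is tightest in part~(3), where the two real Maurer-Cartan forms must be disentangled via the invertibility of the $2\times 2$ block inherited from the order-three computation.
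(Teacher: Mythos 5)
Your proposal takes essentially the same route as the paper's own proof: the paper likewise isolates a distinguished monomial in $\phi^{1;z^2\oz}$, in $\phi^{2;z^2\oz}$, and in the combination $i\,\phi^{2;z^2\oz}+\phi^{3;z^2\oz}$ (namely $(v^1_{z\oz})^2\,\overline{\xi}_{u_1}$, a term $-v^2_{z^2\oz}\,(2\xi_z+\overline{\xi}_{\oz})$, and the $\eta^\bullet_{u_2},\eta^\bullet_{u_3}$ terms multiplying $v^2_{z^2\oz}, v^3_{z^2\oz}$, the last combination being exactly the elimination of $\mu_Z$ that your $2\times 2$ block performs), argues via the prolongation formula \eqref{prolong-formula} that these monomials persist with the additional $z$- and $u$-derivatives attached to the vector-field components, and uses the cross-section values $V^1_{Z\oZ}=1$, $V^2_{Z^2\oZ}=1$, $V^3_{Z^2\oZ}=-i$ to identify their lifts with the target Maurer--Cartan forms, at the same level of verification of non-cancellation as you indicate. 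One minor slip: the normalization $V^2_{Z^2\oZ U_2^jU_3^l}=0$ you invoke in part (3) is supplied by part (2) of this very lemma (the case $j=0$), not by Lemma~\ref{lem-observ-2}, which concerns $V^k_{Z\oZ U^\ell}$.
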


\proof
In the expression of the third order prolonged vector coefficient $\phi^{1;z^2\oz}$, one finds some nonzero constant coefficient of the term $(v^1_{z\oz})^2 \overline\xi_{u_1}$. Thus, according to the prolongation formula \eqref{prolong-formula} and for each $\ell\in\mathbb N^3$, one expects the higher order prolonged vector coefficient $\phi^{1; z^2\oz u^\ell}$ to include the monomial $(v^1_{z\oz})^2 \overline\xi_{u_1 u^\ell}$, as well. Due to the normalization $V^1_{Z\oZ}=1$, the lift of this monomial is nothing but the Maurer-Cartan form $\overline\mu_{U_1U^\ell}$. Hence, one can specify this form\,\,---\,\,and also its conjugation\,\,---\,\,by normalizing $V^1_{Z^2\oZ U^\ell}=0$ in the recurrence relation of this lifted differential invariant. This completes the proof of the first assertion. The proof of the next two assertions is quite similar. Indeed, they hold since the monomials $-v^2_{z^2\oz} (2\xi+\overline\xi)$ and $i v^2_{z^2\oz}\eta^2_{u_2}+i v^3_{z^2\oz}\eta^2_{u_3}$ appear respectively in the expressions of $\phi^{2; z^2\oz}$ and $i\phi^{2; z^2\oz}+\phi^{3; z^2\oz}$.
\endproof

It follows from Lemmas \ref{lem-observ-1}, \ref{lem-observ-2} and \ref{lem-observ-3} that the collection of linearly independent Maurre-Cartan forms \eqref{Basis-MC-original} is reducible to:
\begin{equation}\label{unnormalized-MC-prd-3}
\mu_{U_2^k U_3^l}, \qquad \overline\mu_{U_2^k U_3^l} \qquad \alpha^1_{U_2^k U_3^l}, \qquad \alpha^2_{U_3^{l+1}}, \qquad \alpha^3_{U_3^{l+1}},
\end{equation}
for  $(0,0)\neq(k,l)\in\mathbb N^2$ and $\ell\in\mathbb N^3$.

\subsection{Order Four}

We start this order with the normalizations $V^1_{Z^3\oZ}=V^j_{Z^2\oZ^2}=0$ for $j=1, 2, 3$. Thanks to Lemmas \ref{lem-observ-1}, \ref{lem-observ-2} and \ref{lem-observ-3}, the recurrence relations of these lifted differential invariants simplify to:

\begin{equation}
\label{rec-ord-4}
\aligned
0&=dV^1_{Z^2\oZ^2}=\big(4i\, V^2_{Z^2\oZ^2 U_1}+V^1_{Z^3\oZ^2}\big)\, \omega^z-\big(4i\, V^2_{Z^2\oZ^2 U_1}-V^1_{Z^2\oZ^3}\big)\, \omega^{\oz}+V^1_{Z^2\oZ^2 j}\, \omega^j
\\
&\ \ \ \ \ \ \ \ \ \ \ \ \ \ \ \ \ -4i\, \big(V^3_{Z^2\oZ U_1}-V^3_{Z\oZ^2U_1}\big)\, \alpha^2_{U_3} -4i\, \big(\mu_{U_2}-\overline\mu_{U_2}\big)-12 \,\big(\mu_{U_3}+\overline\mu_{U_3}\big),
\\
0&=dV^1_{Z^3\oZ}=\frac{5}{4}\, V^1_{Z^4\oZ}\, \omega^z+\frac{1}{4} \big(4 \,V^1_{Z^3\oZ^2}+V^1_{Z^4\oZ}-V^2_{Z^4\oZ^2}\big)\, \omega^{\oz}+V^1_{Z^3\oZ j}\, \omega^j
\\
&\ \ \ \ \ \ \ \ \ \ \ \ \ \ \ \ +V^3_{Z^3\oZ}\, \alpha^1_{U_3}-\frac{1}{4}\, V^3_{Z^4\oZ}\, \alpha^2_{U_3}+3i\, \overline\mu_{U_2}+3\, \overline\mu_{U_3},
\\
0&=dV^2_{Z^2\oZ^2}=V^2_{Z^3\oZ^2}\, \omega^z+V^2_{Z^2\oZ^3}\, \omega^{\oz}+V^2_{Z^2\oZ^2 j}\, \omega^j-2\,\alpha^1_{U_2},
\\
0&=dV^3_{Z^2\oZ^2}=\big(V^3_{Z^3\oZ^2}-2i\,V^3_{Z^2\oZ U_1}\big)\, \omega^z+\big(V^3_{Z^2\oZ^3}+2i\, V^3_{Z\oZ^2 U_1}\big)\, \omega^{\oz}+V^3_{Z^2\oZ^2 j} \, \omega^j-2\,\alpha^1_{U_3}.
\endaligned
\end{equation}

These equations result in the following expressions for the Maurer-Cartan forms:
\begin{equation*}
\label{normalization-ord-4}
\aligned
{\rm Im}\,\mu_{U_2}&=-\frac{1}{8} \bigg(\big(4i \,V^2_{Z^2\oZ^2 U_1}+V^1_{Z^3\oZ^2}\big)\, \omega^z-\big(4i\, V^2_{Z^2\oZ^2 U_1}-V^1_{Z^2\oZ^3}\big)\, \omega^{\oz}+V^1_{Z^2\oZ^2 U_j}\, \omega^j
\\
&-4i\, \big(V^3_{Z^2\oZ U_1}-V^3_{Z\oZ^2U_1}\big) \alpha^2_{U_3}-12\, \big(\mu_{U_3}+\overline\mu_{U_3}\big)\bigg),
\\
\mu_{U_3}&=\frac{1}{96} \bigg(15\, V^1_{Z^4 \oZ}-12i\, V^3_{Z^3\oZ} V^3_{Z^2\oZ U_1}+20i\, V^3_{Z\oZ^3} V^3_{Z^2\oZ U_1}+12i\, V^2_{Z^2\oZ^2 U_1}+3\, V^1_{Z^3\oZ^2}
\\
&-20\, V^1_{Z^2\oZ^3}-5\, V^1_{Z\oZ^4}+5\, V^2_{Z^2\oZ^4}-10\, V^3_{Z\oZ^3} V^3_{Z^3\oZ^2}+6\, V^3_{Z^3\oZ} V^3_{Z^3\oZ^2}\bigg)\, \omega^z
\\
&+\frac{1}{96}\bigg(12\, V^1_{Z^3\oZ^2}-25\, V^1_{Z\oZ^4}+12i\, V^3_{Z^3\oZ} V^3_{Z\oZ^2 U_1}-20i\, V^3_{Z\oZ^3} V^3_{Z\oZ^2 U_1}-12i\, V^2_{Z^2\oZ^2 U_1}
\\
&+3\, V^1_{Z^2\oZ^3}+3 \,V^1_{Z^4\oZ}-3\, V^2_{Z^4\oZ^2}-10\, V^3_{Z\oZ^3} V^3_{Z^2\oZ^3}+6 \,V^3_{Z^3\oZ} V^3_{Z^2\oZ^3}\bigg)\, \omega^{\oz}
\\
&+\frac{1}{96} \bigg(3\, V^1_{Z^2\oZ^2U_j}+12\, V^1_{Z^3\oZ U_j}-20\, V^1_{Z\oZ^3U_j}+6 \,V^3_{Z^3\oZ} V^3_{Z^2\oZ^2U_j}-10\, V^3_{Z\oZ^3} V^3_{Z^2\oZ^2U_j}\bigg) \,\omega^j
\\
&+\frac{1}{96} \bigg(5\, V^3_{Z\oZ^4}-3\, V^3_{Z^4\oZ}-12i\, V^3_{Z2\oZ U_1}+12i\, V^3_{Z\oZ^2U1}\bigg)\, \alpha^2_{U_3}+i \,{\rm Re}\, \mu_{U_2},
\\
\alpha^1_{U_2}&=\frac{1}{2} \,V^2_{Z^3\oZ^2}\, \omega^z+\frac{1}{2}\,V^2_{Z^2\oZ^3} \,\omega^{\oz}+\frac{1}{2}\,V^2_{Z^2\oZ^2 U_j}\, \omega^j,
\\
\alpha^1_{U_3}&=\big(\frac{1}{2} \,V^3_{Z^3\oZ^2}-i\, V^3_{Z^2\oZ U_1}\big)\, \omega^z+ \big(\frac{1}{2}\, V^3_{Z^2\oZ^3}+i\, V^3_{Z\oZ^2 U_1}\big)\, \omega^{\oz}+\frac{1}{2}\, V^3_{Z^2\oZ^2 U_j}\, \omega^j.
\endaligned
\end{equation*}

Following the same lines of the proofs of Lemmas \ref{lem-observ-1}, \ref{lem-observ-2} and \ref{lem-observ-3}, a symbolic view to the computations of the recurrence relations \eqref{rec-ord-4} helps us to find some more general observations concerning possible normalizations in higher orders. For brevity, we omit the proof of these observations as they essentially resemble an slight repetition of the arguments presented in the proofs of the already mentioned lemmas.

\begin{Lemma}
\label{lem-observ-4}
For each $j, l\geq 0$ it is always possible to
\begin{itemize}
  \item[1.] specify the Maurer-Cartan form ${\rm Im}\,\mu_{U_2^{j+1}}$ by normalizing to zero the lifted invariant $V^1_{Z^2\oZ^2 U_2^j}$.
  \item[2.] specify the Maurer-Cartan form $\mu_{U_2^j U_3^{l+1}}$ by normalizing to zero the lifted invariant $V^1_{Z^3\oZ U_2^j U_3^l}$.
  \item[3.] specify the Maurer-Cartan form $\alpha^1_{U_2^{j+1} U_3^l}$ by normalizing to zero the lifted invariant $V^2_{Z^2\oZ^2 U_2^j U_3^l}$.
  \item[4.] specify the Maurer-Cartan form $\alpha^1_{U_3^{l+1}}$ by normalizing to zero the lifted invariant $V^3_{Z^2\oZ^2 U_3^l}$.
\end{itemize}
\end{Lemma}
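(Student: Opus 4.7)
The plan is to adapt the template of the proofs of Lemmas \ref{lem-observ-1}--\ref{lem-observ-3}: for each of the four Maurer--Cartan forms in the statement, I would identify a distinguished monomial in the relevant prolonged vector coefficient $\phi^{\alpha;J}$ whose lift is a nonzero multiple of that form, and then invoke the prolongation formula \eqref{prolong-formula} to propagate the nonvanishing contribution under iterated $u_2$- and $u_3$-differentiations. The base cases $(j,l)=(0,0)$ are precisely what the four recurrence relations displayed in \eqref{rec-ord-4} deliver.

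For item (1), the recurrence for $dV^1_{Z^2\oZ^2}$ contains the combination $-4i(\mu_{U_2}-\overline\mu_{U_2})-12(\mu_{U_3}+\overline\mu_{U_3})$, which, combined with the expression for ${\rm Re}\,\mu_{U_3}$ obtained immediately below \eqref{rec-ord-4}, pins down ${\rm Im}\,\mu_{U_2}$. To reach ${\rm Im}\,\mu_{U_2^{j+1}}$ for higher $j$, I would extract from $\phi^{1;z^2\oz^2 u_2^j}$ the single monomial proportional to $(\xi_{u_2^{j+1}}-\overline\xi_{u_2^{j+1}})$ produced by iterating $D_{u_2}$ on the monomial already present at $j=0$; its lift is $2ic\cdot{\rm Im}\,\mu_{U_2^{j+1}}$ with a universal nonzero constant $c$ (an artifact of the normalization $V^1_{Z\oZ}=1$). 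For item (2), once item (1) has handled ${\rm Im}\,\mu_{U_2^{j+1}}$, the combination $3i\overline\mu_{U_2}+3\overline\mu_{U_3}$ visible in $dV^1_{Z^3\oZ}$ leaves $\overline\mu_{U_3}$ as the only free form at the base case; the inductive step promotes this to $\overline\mu_{U_2^j U_3^{l+1}}$ (equivalently $\mu_{U_2^j U_3^{l+1}}$ by conjugation) via the same mechanism applied to $\phi^{1;z^3\oz u_2^j u_3^l}$.

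Items (3) and (4) are structurally simpler because no $\mu$-form enters. The third and fourth recurrences of \eqref{rec-ord-4} exhibit the clean contributions $-2\alpha^1_{U_2}$ and $-2\alpha^1_{U_3}$ in $dV^2_{Z^2\oZ^2}$ and $dV^3_{Z^2\oZ^2}$ respectively. To extend these to $\alpha^1_{U_2^{j+1}U_3^l}$ and $\alpha^1_{U_3^{l+1}}$, I would apply iterated $u_2$- and $u_3$-differentiations through \eqref{prolong-formula} to the distinguished monomials (of type $-v^2_{z^2\oz^2}\,\eta^1_{u_2}$ and $-v^3_{z^2\oz^2}\,\eta^1_{u_3}$, schematically) appearing in $\phi^{2;z^2\oz^2}$ and $\phi^{3;z^2\oz^2}$; the resulting iterated monomials lift, in view of the order-three normalizations $V^2_{Z^2\oZ}=1$ and $V^3_{Z^2\oZ}=-i$, to nonzero constant multiples of the target Maurer--Cartan forms.

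The main obstacle is combinatorial bookkeeping. Expanding $\phi^{\alpha;z^j\oz^k u^\ell}$ through \eqref{prolong-formula} generates a proliferation of monomials built from products of previously normalized lifted invariants and their derivatives, and one must verify that no competing monomial contributes a second copy of the distinguished Maurer--Cartan form whose total coefficient could vanish. As in the proofs of Lemmas \ref{lem-observ-1}--\ref{lem-observ-3}, the remedy is a multi-degree count in the variables $\xi,\overline\xi,\eta^k$ and in the $v$-variables: the distinguished monomial is uniquely characterized by its multi-degree, so no cancellation is possible and the nonvanishing coefficient survives at every order.
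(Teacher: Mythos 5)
Your plan coincides with the paper's own treatment: the paper explicitly omits the proof of this lemma, remarking that it is a near repetition of the arguments used for Lemmas \ref{lem-observ-1}, \ref{lem-observ-2} and \ref{lem-observ-3}, namely reading the base cases off the order-four recurrences \eqref{rec-ord-4} and then, via the prolongation formula \eqref{prolong-formula}, tracking a distinguished monomial in $\phi^{\alpha;J}$ whose lift is a nonzero constant multiple of the target Maurer--Cartan form under iterated $u_2$-, $u_3$-differentiations --- exactly your strategy, including the coupling of ${\rm Im}\,\mu_{U_2}$ with $\mu_{U_3}$ through the first two recurrences. One small correction to your schematic choice in items (3)--(4): a factor such as $v^2_{z^2\oz^2}$ cannot serve as the distinguished coefficient, since its lift $V^2_{Z^2\oZ^2}$ is normalized to zero; as in the paper's proof of Lemma \ref{lem-observ-3}, the accompanying $v$-jet must be one lifting to a nonzero constant (e.g.\ $v^1_{z\oz}$, $v^2_{z^2\oz}$, $v^3_{z^2\oz}$), which is what actually produces the constants $-2$ in \eqref{rec-ord-4}.
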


Following this lemma, the collection of yet unnormalized Maurer-Cartan forms \eqref{unnormalized-MC-prd-3} reduces to:
\begin{equation}
\label{unnormalized-MC-prd-4}
{\rm Re}\,\mu_{U_2^{j+1}}, \qquad \alpha^2_{U_3^{l+1}}, \qquad \alpha^3_{U_3^{l+1}}, \qquad\qquad j, l\geq 0.
\end{equation}

By Lemmas \ref{lem-observ-1}, \ref{lem-observ-2} and \ref{lem-observ-3}, it remains in this order to only consider the recurrence relations of the two lifted differential invariants $V^3_{Z^3\oZ}$ and $V^3_{Z^2\oZ U_1}$. Our computations show:
\begin{equation}
\label{VZ3Zbar}
\aligned
dV^3_{Z^3\oZ}&=\big(V^3_{Z^4\oZ}-\frac{2i}{3}\, V^3_{Z^3\oZ}V^3_{Z^3\oZ}\big)\,\omega^z+\big(V^3_{Z^3\oZ^2}+i\, V^2_{Z^3\oZ^2}+\frac{2i}{3}\,V^3_{Z^3\oZ}V^3_{Z\oZ^3}\big)\,\omega^{\oz}
\\
& +\big(\frac{2i}{3}\, V^3_{Z^3\oZ} V^3_{Z\oZ^2 U_j}-\frac{2i}{3}\, V^3_{Z^3\oZ} V^3_{Z^2\oZ U_j}+V^3_{Z^3\oZ U_j}\big)\, \omega^j+V^3_{Z^3\oZ} \,\big(3i\, \alpha^2_{U_3}-\frac{1}{3}\, \alpha^3_{U_3}\big),
\\
dV^3_{Z^2\oZ U_1}&=\frac{1}{12} \bigg(4i\, V^3_{Z^3\oZ} V^3_{Z^2\oZ U_1}-10 \,V^1_{Z^4\oZ}+3i\, V^3_{Z^3\oZ} V^2_{Z^3\oZ^2}-2i\, V^3_{Z^3\oZ} V^2_{Z^2\oZ^3}+ V^3_{Z^3\oZ U_1}-7\, V^3_{Z^3\oZ} V^3_{Z^3\oZ^2}\bigg)\, \omega^z
\\
&+\frac{1}{12} \bigg(10 i\, V^3_{Z\oZ^3} V^3_{Z^2\oZ U_1}-14i \,V^3_{Z^3\oZ} V^3_{Z\oZ^2 U_1}+3i \,V^3_{Z^3\oZ} V^2_{Z^2\oZ^3}+V^3_{Z^2\oZ^2 U_1}-7\, V^3_{Z^3\oZ} V^3_{Z^2\oZ^3}
\\
&\ \ \ \ \ \ \ \ \ \ \ +i\, V^2_{Z^2\oZ^2 U_1}-8\, V^1_{Z^3\oZ^2}-2 \,V^1_{Z^4\oZ}+2\, V^2_{Z^4\oZ^2}\bigg) \,\omega^{\oz}
\\
&+\frac{1}{12} \bigg(10i\, V^3_{Z^2\oZ U_1} V^3_{Z\oZ^2 U_j}-10i\, V^3_{Z^2\oZ U_1} V^3_{Z^2\oZ U_j}+3i\, V^3_{Z^3\oZ} V^2_{Z^2\oZ^2 U_j}-7\, V^3_{Z^3\oZ} V^3_{Z^2\oZ^2 U_j}
\\
&\ \ \ \ +V^3_{Z^2\oZ U_1 U_j} -8\, V^1_{Z^3\oZ U_j}\bigg)\, \omega^j+\frac{1}{6}\, \big(V^3_{Z^4\oZ}-i\, V^3_{Z^3\oZ} V^3_{Z\oZ^3}+12i\, V^3_{Z^2\oZ U_1}\big)\, \alpha^2_{U_3}-\frac{2}{3}\, V^3_{Z^2\oZ U_1}\, \alpha^3_{U_3}.
\endaligned
\end{equation}

These equations indicate that $V^3_{Z^3\oZ}$ and $V^3_{Z^2\oZ U_1}$ are {\it relative differential invariants} \cite{Olver-1995}. Therefore, vanishing or non-vanishing of the coefficients of the monomials $z^3\oz$ and $z^2\oz u_1$ in the third defining equation of \eqref{normalization-5cubic} is an invariant property under holomorphic transformations. By the above recurrence relations, the possibility of normalizing the two real Maurer-Cartan forms $\alpha^2_{U_3}$ and $\alpha^3_{U_3}$ depends upon the vanishing and non-vanishing of the two differential invariants $V^3_{Z^3\oZ}$ and $V^3_{Z^2\oZ U_1}$. Thus, we have to distinguish three major branches in the future issues of constructing desired moving frame (normal form):
\begin{itemize}
  \item[$\Box$] {\bf Branch 1.} $V^3_{Z^3\oZ}\neq 0$.
  \item[$\Box$] {\bf Branch 2.} $V^3_{Z^3\oZ}= 0$ and $V^3_{Z^2\oZ U_1}\neq 0$.
  \item[$\Box$] {\bf Branch 3.} $V^3_{Z^3\oZ}= V^3_{Z^2\oZ U_1}= 0$.
\end{itemize}

Thus, the current order four was the last order in which our desired normal form appears in a uniform fashion. From now on, we have to proceed our normal form construction along three separate lines. Anyway, according to the discussion introduced at the end of Section \ref{sec-moving-frame} ({\it cf.} \cite{Olver-2018}) and up to this order, we have succeeded in construction a {\it universal} and {\it partial} normal form, presented in the following

\begin{Theorem}
Every $5$-dimensional real-analytic totally nondegenerate CR manifold $M^5\subset\mathbb C^4$ can be transformed, through some origin-preserving holomorphic transformation, into the {\sl partial} normal form:
\begin{equation}
\label{partial-normal-form}
\aligned
v^1&=z\oz+ \sum_{j+k+\sharp\ell\geq 5} \frac{1}{j!\,k!\,\ell!}\,V^1_{Z^j\oZ^k U^\ell} z^j \oz^k u^\ell,
\\
v^2&=\frac{1}{2}\,(z^2\oz+z\oz^2)+\sum_{j+k+\sharp\ell\geq 5} \frac{1}{j!\, k!\, \ell!}\,V^2_{Z^j\oZ^k U^\ell} z^j \oz^k u^\ell,
\\
v^3&=-\frac{i}{2}\,(z^2\oz-z\oz^2)+\frac{1}{6}\,V^3_{Z^3\oZ}\,z^3\oz+\frac{1}{6}\,V^3_{Z\oZ^3}\,z\oz^3+\frac{1}{2}\,V^3_{Z^2\oZ U_1} z^2\oz u_1+\frac{1}{2}\,V^3_{Z\oZ^2 U_1} z\oz^2 u_1
\\
& \ \ \ \ \ \ \ +\sum_{j+k+\sharp\ell\geq 5} \frac{1}{j! \, k! \, \ell!}\, V^3_{Z^j\oZ^k U^\ell} z^j \oz^k u^\ell,
\endaligned
\end{equation}
where, taking into account the conjugation relation $\overline{V^\bullet_{Z^j\oZ^k U^\ell}}=V^\bullet_{Z^k\oZ^j U^\ell}$, we have the cross-section normalizations:
\begin{equation}
\label{normalization-partial}
\aligned
0&=V^1_{Z^j U^\ell}=V^1_{Z\oZ U^\ell}=V^1_{Z^2\oZ U^\ell}=V^1_{Z^2\oZ^2 U_2^j}=V^1_{Z^3\oZ U_2^j U_3^l},
\\
0&=V^2_{Z^j U^\ell}=V^2_{Z^j\oZ U^\ell}=V^2_{Z^2\oZ^2 U_2^j U_3^l},
\\
0&=V^3_{Z^j U^\ell}=V^3_{Z\oZ U^\ell}=V^3_{Z^2\oZ U_2^j U_3^l}=V^3_{Z^2\oZ^2 U_3^l},
\endaligned
\end{equation}
for $j, l\in\mathbb N$ and $\ell\in\mathbb N^3$.
\end{Theorem}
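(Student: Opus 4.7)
The plan is that this theorem is essentially a bookkeeping collation of the order-by-order normalizations carried out in Subsections 4.1--4.4, combined with the four auxiliary Lemmas \ref{lem-observ-1}--\ref{lem-observ-4}. The proof strategy is to verify that the vanishing coefficients listed in \eqref{normalization-partial} are precisely the cross-section imposed at orders $0$ through $4$, and that every such normalization is admissible, i.e. that the corresponding recurrence relation \eqref{rec-formula} exhibits a nonzero constant coefficient of the Maurer--Cartan form being specified. Since every normalization can be effected by a unique element of the origin-preserving biholomorphic pseudo-group $\mathcal G$, the resulting transformation is automatically real-analytic (the moving-frame construction works throughout at the infinitesimal level, never formally), which yields the convergence of the ensuing normal form.

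The first step of the bookkeeping is simply to go through orders $0$--$4$ and collate the normalizations. At order $0$, fixing the reference point gives $Z = \oZ = U_j = V^k = 0$. At order $1$, \eqref{normalization-5cubic} forces $V^k_Z = V^k_{\oZ} = V^k_{U_j} = 0$, which Lemma \ref{lem-observ-1} promotes to the full families $V^k_{Z^{j+1}U^\ell} = V^k_{U^\ell U_j} = 0$, matching the rows indexed by $V^k_{Z^j U^\ell}$ in \eqref{normalization-partial}. At order $2$, the assignments $V^1_{Z\oZ} = 1$, $V^{2,3}_{Z\oZ} = 0$ are dictated again by \eqref{normalization-5cubic}, and Lemma \ref{lem-observ-2} extends them to $V^k_{Z\oZ U^\ell} = 0$. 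At order $3$, the totally nondegenerate Levi-type choices $V^2_{Z^2\oZ} = V^2_{Z\oZ^2} = 1$, $V^3_{Z^2\oZ} = -V^3_{Z\oZ^2} = -i$ trigger Lemma \ref{lem-observ-3} and produce $V^1_{Z^2\oZ U^\ell} = V^2_{Z^{j+2}\oZ U^\ell} = V^3_{Z^2\oZ U_2^j U_3^l} = 0$. Finally, at order $4$, the solvability of \eqref{rec-ord-4} for ${\rm Im}\,\mu_{U_2}, \mu_{U_3}, \alpha^1_{U_2}, \alpha^1_{U_3}$ allows the normalizations $V^1_{Z^3\oZ} = V^j_{Z^2\oZ^2} = 0$, and Lemma \ref{lem-observ-4} propagates them to $V^1_{Z^2\oZ^2 U_2^j} = V^1_{Z^3\oZ U_2^j U_3^l} = V^2_{Z^2\oZ^2 U_2^j U_3^l} = V^3_{Z^2\oZ^2 U_3^l} = 0$. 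Taking the union of these lists reproduces \eqref{normalization-partial} verbatim.

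The main obstacle to worry about is not a logical gap but a consistency check, namely that the partial normal form is truly \emph{universal}, i.e. independent of the impending branch split at $V^3_{Z^3\oZ}$ and $V^3_{Z^2\oZ U_1}$. One must verify that the Maurer--Cartan forms consumed up to order $4$ are disjoint from the still-unnormalized family \eqref{unnormalized-MC-prd-4}, so that no hidden conflict of normalizations arises. Inspection of the forms pinned down in orders $0$--$4$ (namely the zeroth-order generators together with $\mu_{U_1}$, $\alpha^k_Z$, $\alpha^k_{\oZ}$, $\alpha^k_{V^j}$, $\alpha^k_{U_1}$, $\mu_Z$, $\alpha^2_{U_2}$, $\alpha^3_{U_2}$, ${\rm Im}\,\mu_{U_2}$, $\mu_{U_3}$, $\alpha^1_{U_2}$, $\alpha^1_{U_3}$) shows that none of them lies in $\{{\rm Re}\,\mu_{U_2^{j+1}}, \alpha^2_{U_3^{l+1}}, \alpha^3_{U_3^{l+1}}\}$; the branch decision, deferred until \eqref{VZ3Zbar}, depends only on $\alpha^2_{U_3}$ and $\alpha^3_{U_3}$. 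With that transversality verified, the equivariant moving-frame construction produces a unique origin-preserving holomorphic transformation taking $M^5$ to \eqref{partial-normal-form}, completing the proof.
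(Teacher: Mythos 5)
Your proposal is correct and follows essentially the same route as the paper, which proves this theorem implicitly by collating the order-by-order cross-section normalizations of orders $0$--$4$ together with Lemmas \ref{lem-observ-1}--\ref{lem-observ-4} and invoking the moving-frame/normal-form mechanism recalled at the end of Section \ref{sec-moving-frame}; your added transversality check against the unnormalized forms \eqref{unnormalized-MC-prd-4} is consistent with the paper's own tracking of the reduced Maurer--Cartan basis. The only slip is your closing claim that the transformation is \emph{unique}: at this stage only a partial moving frame exists (the parameters dual to ${\rm Re}\,\mu_{U_2^{j+1}}, \alpha^2_{U_3^{l+1}}, \alpha^3_{U_3^{l+1}}$ are still free), but since the theorem asserts only existence this does not affect the validity of the argument.
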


In order to complete this partial normal form, we have to proceed in the next sections along the three appeared branches. This situation may remember that of part (d), page 246 of \cite{Chern-Moser}, where a more explicit expression of the normal form (3.18), constructed for real hypersurfaces in $\mathbb C^2$, depends upon the invariant property of vanishing or non-vanishing of the coefficient of the monomial $z^4\oz^2$.

\section{Branch $1$: $V^3_{Z^3\oZ}\neq 0$}
\label{Sec-Branch-1}

As the first branch, let us assume that the coefficient $V^3_{Z^3\oZ}$ in the partial normal form \eqref{partial-normal-form} is nonzero. Hence, we may plainly normalize it to $1$. This normalization brings the recurrence relation of this differential invariant in \eqref{VZ3Zbar} into the simple form:

 \begin{equation*}
\aligned
0=dV^3_{Z^3\oZ}&=\big(V^3_{Z^4\oZ}-\frac{2i}{3}\big)\,\omega^z+\big(V^3_{Z^3\oZ^2}+i\, V^2_{Z^3\oZ^2}+\frac{2i}{3}\big)\,\omega^{\oz}
\\
& +\big(\frac{2i}{3}\, V^3_{Z\oZ^2 U_j}-\frac{2i}{3}\, V^3_{Z^2\oZ U_j}+V^3_{Z^3\oZ U_j}\big) \,\omega^j+3i\, \alpha^2_{U_3}-\frac{1}{3}\, \alpha^3_{U_3}.
\endaligned
\end{equation*}
Keeping in mind that $\alpha^2_{U_3}$ and $\alpha^3_{U_3}$ are real Maurer-Cartan forms, this equation leads us to specify them as:
\[
\aligned
\alpha^2_{U_3}&=\frac{i}{6}\big(i\, V^2_{Z^2\oZ^3}-V^3_{Z^2 \oZ^3}+ V^3_{Z^4 \oZ}\big)\, \omega^z+\frac{i}{6} \big(i\, V^2_{Z^3 \oZ^2}+ V^3_{Z^3 \oZ^2}-V^3_{Z \oZ^4}\big)\, \omega^{\oz}+ \frac{i}{6}\big(V^3_{Z^3\oZ U_j}-V^3_{Z\oZ^3 U_j}\big)\, \omega^j,
\\
\alpha^3_{U_3}&=\frac{2}{3}\big(V^3_{Z^4 \oZ}+V^3_{Z^2 \oZ^3}-i\, V^2_{Z^2\oZ^3}-\frac{4i}{3}\big)\, \omega^z+\frac{2}{3}\big(V^3_{Z^3 \oZ^2}+i\, V^2_{Z^3 \oZ^2}+V^3_{Z \oZ^4}+\frac{4i}{3}\big)\, \omega^{\oz}
\\
&+\frac{2}{3}\big(\frac{4i}{3}\, V^3_{Z \oZ^2 U_j}-\frac{4i}{3}\, V^3_{Z^2 \oZ U_j}+V^3_{Z^3 \oZ U_j}+V^3_{Z \oZ^3 U_j}\big)\, \omega^j.
\endaligned
\]

A close inspection of the computations concerned the recurrence relation of $dV^3_{Z^3\oZ}$ reveals that

\begin{Lemma}
  \label{normalization-alpha2-alpha3-U3-br-1}
In Branch $1$ and for any $j\geq 0$, it is possible to normalize Maurer-Cartan forms $\alpha^2_{U_3^{j+1}}$ and $\alpha^3_{U_3^{j+1}}$ by normalizing $V^3_{Z^3\oZ}=1$ and $V^3_{Z^3\oZ U_3^j}=0$, when $j\neq 0$.
\end{Lemma}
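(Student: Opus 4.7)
The plan is to proceed by induction on $j$, generalising the base case $j=0$ just computed. The $j=0$ step is transparent: the combination $3i\,\alpha^2_{U_3}-\tfrac{1}{3}\,\alpha^3_{U_3}$ appears in the recurrence for $V^3_{Z^3\oZ}$, and since the two forms are real while the coefficients $3i$ and $-\tfrac{1}{3}$ are $\mathbb{R}$-linearly independent, the complex equation $dV^3_{Z^3\oZ}=0$ (after imposing $V^3_{Z^3\oZ}=1$) splits into two real equations uniquely specifying both $\alpha^2_{U_3}$ and $\alpha^3_{U_3}$, exactly as carried out before the lemma.

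For the inductive step $j\geq 1$, the goal is to show that the recurrence of $V^3_{Z^3\oZ U_3^j}$ isolates $\alpha^2_{U_3^{j+1}}$ and $\alpha^3_{U_3^{j+1}}$ through the same linear combination $3i\,\alpha^2_{U_3^{j+1}}-\tfrac{1}{3}\,\alpha^3_{U_3^{j+1}}$. To see this, apply the prolongation formula \eqref{prolong-formula} starting from $\phi^{3;z^3\oz}$ and iterate $D_{u_3}$ exactly $j$ times. The base coefficient $\phi^{3;z^3\oz}$ contains the monomial $v^3_{z^3\oz}\bigl(3i\,\eta^2_{u_3}-\tfrac{1}{3}\,\eta^3_{u_3}\bigr)$, since this is precisely what produced the base case recurrence in \eqref{VZ3Zbar}. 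Repeated application of $D_{u_3}$ then generates in $\phi^{3;z^3\oz u_3^j}$ the term $v^3_{z^3\oz}\bigl(3i\,\eta^2_{u_3^{j+1}}-\tfrac{1}{3}\,\eta^3_{u_3^{j+1}}\bigr)$ together with monomials involving only vector coefficients $\eta^k_{u_3^r}$ with $r\leq j$, or $\xi, \overline\xi$ and their derivatives. Under the lifting operation and the normalization $V^3_{Z^3\oZ}=1$, this term contributes the invariant combination $3i\,\alpha^2_{U_3^{j+1}}-\tfrac{1}{3}\,\alpha^3_{U_3^{j+1}}$ to the recurrence relation of $V^3_{Z^3\oZ U_3^j}$; all other Maurer-Cartan forms occurring there have already been specified in previous orders by Lemmas \ref{lem-observ-1}--\ref{lem-observ-4} or by the inductive hypothesis on $r<j$. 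Normalising $V^3_{Z^3\oZ U_3^j}=0$ and splitting the resulting complex equation into real and imaginary parts then solves uniquely for both real forms $\alpha^2_{U_3^{j+1}}$ and $\alpha^3_{U_3^{j+1}}$.

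The main technical obstacle is to ensure that no cancellation corrupts the leading coefficients $3i$ and $-\tfrac{1}{3}$ as one iterates $D_{u_3}$, and that the extra terms produced during the iteration do not reintroduce unspecified Maurer-Cartan forms of order $j+1$. This is a purely symbolic verification of the shape recorded in the prolongation formula, entirely parallel to the arguments of Lemmas \ref{lem-observ-1}--\ref{lem-observ-4}: the extra terms produced by $D_{u_3}$ either multiply jet coordinates which are already normalized to constants independent of the group parameters, or contribute to Maurer-Cartan forms specified by earlier normalizations. We would therefore omit the detailed combinatorial bookkeeping, echoing the style of the preceding observation lemmas, and record the result in the form stated.
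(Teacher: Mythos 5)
Your argument is correct and takes essentially the same route as the paper: the paper gives no written proof beyond an appeal to ``a close inspection of the computations,'' and its implicit justification is precisely your monomial-persistence argument in the style of Lemmas \ref{lem-observ-1}--\ref{lem-observ-4}, with the base case handled, as you do, by splitting the complex equation $0=dV^3_{Z^3\oZ}$ (after setting $V^3_{Z^3\oZ}=1$) into real and imaginary parts to solve for the two real forms $\alpha^2_{U_3}$ and $\alpha^3_{U_3}$. The only cosmetic caveat is that the coefficients $3i$ and $-\tfrac{1}{3}$ in \eqref{VZ3Zbar} emerge after substituting the order-three normalizations (of $\mu_Z$, $\alpha^2_{U_2}$, $\alpha^3_{U_2}$) rather than from a single raw monomial of $\phi^{3;z^3\oz}$, but this does not affect the mechanism you describe, which is the same inspection the paper relies on.
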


At this stage, most of the Maurer-Cartan forms are normalized except those of the real form ${\rm Re}\, \mu_{U_2^{j+1}}$ ({\it cf.} \eqref{unnormalized-MC-prd-4}). In order to check the possibility of their normalization, we have to proceed into the next order five.

\subsection{Order 5 -- Branch 1}

After examining several order five recurrence relations, we realized finally that the normalization ${\rm Re} V^1_{Z^4\oZ}=0$ may bring our desired result. By this normalization, the corresponding recurrence formula gives the long expression:
\[
\aligned
dV^1_{Z^4\oZ}&=\bigg(\frac{i}{2} V^1_{Z^2\oZ^3}-\frac{11i}{12} V^3_{Z^4\oZ} V^3_{Z^2\oZ U_1}+\frac{i}{48} V^3_{Z\oZ^4} V^2_{Z^2\oZ^3}-\frac{i}{12} V^3_{Z\oZ^2U_1} V^3_{Z^4\oZ}-\frac{i}{30} V^3_{Z^5\oZ} V^3_{Z^4\oZ}+\frac{i}{8} V^1_{Z^3\oZ^2}
\\
&+\frac{i}{12} V^3_{Z\oZ^2U_1} V^3_{Z^2\oZ^3}-\frac{5i}{144} V^3_{Z^4\oZ} V^2_{Z^2\oZ^3}+\frac{5}{144} V^3_{Z^4\oZ} V^3_{Z^2\oZ^3}+\frac{1}{2} V^3_{Z^4\oZ} V^3_{Z^3\oZ^2}+\frac{1}{48} V^3_{Z\oZ^4} V^3_{Z^4\oZ}
\\
&-\frac{1}{48} V^3_{Z\oZ^4} V^3_{Z^2\oZ^3}+\frac{1}{12} V^3_{Z\oZ^2U_1} V^2_{Z^2\oZ^3}-\frac{1}{12} V^3_{Z^2\oZ U_1} V^2_{Z^2\oZ^3}+\frac{1}{30} V^3_{Z^5\oZ} V^2_{Z^2\oZ^3}-\frac{i}{8} V^2_{Z^2\oZ^4}+\frac{6}{5} V^1_{Z^5\oZ}
\\
&+\frac{i}{30} V^3_{Z^5\oZ} V^3_{Z^2\oZ^3}-\frac{1}{2} V^2_{Z^2\oZ^2U_1}-\frac{1}{3} V^3_{Z^2\oZ U_1}-\frac{5}{144} V^3_{Z^4\oZ} V^3_{Z^4\oZ}-\frac{i}{12}V^3_{Z^2\oZ U_1} V^3_{Z^2\oZ^3}-\frac{i}{6} V^3_{Z^3\oZ^2}
\\
&-\frac{7}{6} V^1_{Z^4\oZ} V^3_{Z^4\oZ}-\frac{1}{6} V^1_{Z^4\oZ} V^3_{Z^2\oZ^3}+\frac{i}{6} V^1_{Z^4\oZ} V^2_{Z^2\oZ^3}-\frac{7i}{9} V^1_{Z^4\oZ}\bigg)\, \omega^z
\\
&+\bigg(\frac{5i}{24} V^2_{Z^4\oZ^2}-\frac{5i}{6} V^1_{Z^3\oZ^2}-\frac{i}{6} V^3_{Z^2\oZ^3}+\frac{i}{8} V^1_{Z^2\oZ^3}-\frac{5i}{144} V^3_{Z^4\oZ} V^2_{Z^3\oZ^2}-\frac{i}{30} V^3_{Z^5\oZ} V^3_{Z^3\oZ^2}+\frac{i}{30} V^3_{Z^5\oZ} V^3_{Z\oZ^4}
\\
&+\frac{i}{12} V^3_{Z^2\oZ U_1} V^3_{Z^3\oZ^2}+\frac{i}{48} V^3_{Z\oZ^4} V^2_{Z^3\oZ^2}-\frac{i}{12} V^3_{Z\oZ^2U_1} V^3_{Z^3\oZ^2}-\frac{i}{12} V^3_{Z^2\oZ U_1} V^3_{Z\oZ^4}+\frac{i}{12} V^3_{Z\oZ^2U_1} V^3_{Z\oZ^4}
\\
&+\frac{1}{2} V^3_{Z^4\oZ} V^3_{Z^2\oZ^3}-\frac{5}{144} V^3_{Z^4\oZ} V^3_{Z^3\oZ^2}+\frac{5}{144} V^3_{Z\oZ^4} V^3_{Z^4\oZ}+\frac{1}{48} V^3_{Z\oZ^4} V^3_{Z^3\oZ^2}+\frac{1}{12} V^3_{Z\oZ^2U_1} V^2_{Z^3\oZ^2}
\\
&-\frac{1}{12} V^3_{Z^2\oZ U_1} V^2_{Z^3\oZ^2}+\frac{1}{30} V^3_{Z^5\oZ} V^2_{Z^3\oZ^2}+\frac{1}{2} V^2_{Z^2\oZ^2U_1}+\frac{1}{3} V^3_{Z\oZ^2U_1}+\frac{1}{5} V^1_{Z^5\oZ}-\frac{1}{48} V^3_{Z\oZ^4} V^3_{Z\oZ^4}
\\
&+V^1_{Z^4\oZ^2}-\frac{1}{5} V^2_{Z5\oZ^2}+i V^3_{Z\oZ^2U_1}V^3_{Z^4\oZ}-\frac{1}{6} V^1_{Z^4\oZ} V^3_{Z\oZ^4}-\frac{7}{6} V^1_{Z^4\oZ} V^3_{Z^3\oZ^2}-\frac{7i}{6} V^1_{Z^4\oZ} V^2_{Z^3\oZ^2}-\frac{11i}{9} V^1_{Z^4\oZ}\bigg) \, \omega^{\oz}
\\
&+\bigg(\frac{i}{12} V^3_{Z^2\oZ U_1} V^3_{Z^3\oZ U_j}-\frac{i}{6} V^3_{Z^2\oZ^2 U_j}+\frac{i}{2} V^1_{Z\oZ^3 U_j}-\frac{5i}{6} V^1_{Z^3\oZ U_j}+\frac{i}{12} V^3_{Z\oZ^2 U_1} V^3_{Z\oZ^3 U_j}+\frac{1}{2} V^3_{Z^4\oZ} V^3_{Z^2\oZ^2 U_j}
\\
&-\frac{5}{144} V^3_{Z^4\oZ} V^3_{Z^3\oZ U_j}+\frac{5}{144} V^3_{Z^4\oZ} V^3_{Z\oZ^3 U_j}+\frac{1}{48} V^3_{Z\oZ^4} V^3_{Z^3\oZ U_j}-\frac{1}{48} V^3_{Z\oZ^4} V^3_{Z\oZ^3 U_j}+\frac{i}{30} V^3_{Z^5\oZ} V^3_{Z\oZ^3 U_j}
\\
&+\frac{i}{8} V^1_{Z^2\oZ^2 U_j}-\frac{i}{12} V^3_{Z^2\oZ U_1} V^3_{Z\oZ^3 U_j}-\frac{i}{30} V^3_{Z^5\oZ} V^3_{Z^3\oZ U_j}-\frac{i}{12} V^3_{Z\oZ^2 U_1} V^3_{Z^3\oZ U_j}+V^1_{Z^4\oZ U_j}
\\
&-\frac{7i}{18} V^1_{Z^4\oZ} V^3_{Z\oZ^2 U_j}+\frac{7i}{18} V^1_{Z^4\oZ} V^3_{Z^2\oZ U_j}-\frac{7}{6} V^1_{Z^4\oZ} V^3_{Z^3\oZ U_j}-\frac{1}{6} V^1_{Z^4\oZ} V^3_{Z\oZ^3 U_j}\bigg)\, \omega^j +4 \, {\rm Re} \mu_{U_2}.
\endaligned
\]
This equation readily specifies the expression of the real Maurer-Cartan form ${\rm Re}\, \mu_{U_2}$. Examining the monomials which appeared among the above performed computations shows that in general we have

\begin{Lemma}
\label{lemma-normalization-Re-mu-U2-br-1}
In Branch $1$ and for any $j\geq 0$, one can specify each real Maurer-Cartan form ${\rm Re}\, \mu_{U_2^{j+1}}$ by normalizing to zero the real lifted differential invariant ${\rm Re} V^1_{Z^4\oZ U_2^j}$.
\end{Lemma}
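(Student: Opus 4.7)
The plan is to adapt the strategy used in the proofs of Lemmas \ref{lem-observ-1}--\ref{lem-observ-4}: identify inside the prolonged coefficient $\phi^{1;z^4\oz u_2^j}$ a pair of monomials whose lift produces a nonzero constant multiple of ${\rm Re}\,\mu_{U_2^{j+1}}$, and then solve the recurrence formula \eqref{rec-formula} for this Maurer-Cartan form after imposing the real normalization ${\rm Re}\,V^1_{Z^4\oZ U_2^j}=0$. The base case $j=0$ is already witnessed by the long recurrence relation for $dV^1_{Z^4\oZ}$ displayed just above the lemma, whose last term is $+4\,{\rm Re}\,\mu_{U_2}$. Since $\overline{V^1_{Z^4\oZ}}=V^1_{Z\oZ^4}$ and ${\rm Re}\,\mu_{U_2}$ is real, averaging the equation with its conjugate preserves the $4\,{\rm Re}\,\mu_{U_2}$ contribution, and every other Maurer-Cartan form on the right-hand side has already been specified by earlier normalizations; hence ${\rm Re}\,V^1_{Z^4\oZ}=0$ uniquely determines ${\rm Re}\,\mu_{U_2}$.

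For the inductive step on $j\geq 1$, I would apply the prolongation formula \eqref{prolong-formula} $j$ times with respect to $u_2$ and track the fate of the monomial pair $2\,\xi_{u_2}+2\,\overline\xi_{u_2}$ already present in $\phi^{1;z^4\oz}$ (these being the source of the $4\,{\rm Re}\,\mu_{U_2}$ visible in the base case, via $\lambda(\xi_{u_2})=\mu_{U_2}$). Since $D_{u_2}$ sends $\xi_{u_2^m}$ to $\xi_{u_2^{m+1}}$ and likewise for $\overline\xi$, after $j$ iterations the pair $2\,\xi_{u_2^{j+1}}+2\,\overline\xi_{u_2^{j+1}}$ persists inside $\phi^{1;z^4\oz u_2^j}$ with the same constant $4$, so the lift contributes $4\,{\rm Re}\,\mu_{U_2^{j+1}}$ to the right-hand side of the recurrence for $dV^1_{Z^4\oZ U_2^j}$. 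Averaging with the conjugate equation then isolates a nonzero real coefficient in front of ${\rm Re}\,\mu_{U_2^{j+1}}$, and imposing ${\rm Re}\,V^1_{Z^4\oZ U_2^j}=0$ specifies it.

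The main obstacle will be controlling the auxiliary terms produced by the three correction contributions in \eqref{prolong-formula}: a priori, a piece such as $-(D_{u_2}\xi)\,v^1_{z^5\oz u_2^{j-1}}$ could reintroduce $\xi_{u_2^{j+1}}$ and compete with the inductive leading term. However, the weight grading $[z]=[\oz]=1$, $[u_1]=2$, $[u_2]=[u_3]=3$ together with the partial normal form \eqref{partial-normal-form} forces every such competitor to carry a factor that is a lifted invariant vanishing at the origin under the cross-section \eqref{normalization-partial}; consequently they affect only the coefficients of already-specified Maurer-Cartan forms, while the coefficient of ${\rm Re}\,\mu_{U_2^{j+1}}$ retains the constant inherited from the base case. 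Combined with Lemma \ref{normalization-alpha2-alpha3-U3-br-1}, which handles the remaining $\alpha^2_{U_3^{l+1}}$ and $\alpha^3_{U_3^{l+1}}$ forms from \eqref{unnormalized-MC-prd-4}, this completes the normalization of all yet-unspecified Maurer-Cartan forms in Branch $1$ and closes the induction.
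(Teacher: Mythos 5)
Your proposal follows essentially the same route as the paper: the case $j=0$ is read off from the displayed recurrence relation for $dV^1_{Z^4\oZ}$ (whose last term is $+4\,{\rm Re}\,\mu_{U_2}$), and the general case is obtained by the same monomial-tracking argument that underlies Lemmas \ref{lem-observ-1}--\ref{lem-observ-4}, which is exactly what the paper means by ``examining the monomials which appeared among the above performed computations.''

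One detail in your inductive step is stated incorrectly, though it is repairable. The bare pair $2\,\xi_{u_2}+2\,\overline{\xi}_{u_2}$ cannot occur with constant coefficients in $\phi^{1;z^4\oz}$: this coefficient is produced by prolongation only in $z$ and $\oz$, and since $\xi=\xi(z,u,v)$ with $\xi_{v^\alpha}=i\,\xi_{u_\alpha}$, the derivative $\xi_{u_2}$ enters only through chain-rule terms of the form $i\,v^\alpha_{\bullet}\,\xi_{u_\alpha}$, i.e.\ always multiplied by jet coordinates of the $v^\alpha$. The constant $4$ in front of ${\rm Re}\,\mu_{U_2}$ emerges only after lifting and evaluating these jet factors on the cross-section (using $V^1_{Z\oZ}=1$, $V^2_{Z^2\oZ}=1$, $V^3_{Z^2\oZ}=-i$, \dots), precisely as in the proof of Lemma \ref{lem-observ-3}, where the relevant monomial is $(v^1_{z\oz})^2\,\overline{\xi}_{u_1}$ rather than a bare $\overline{\xi}_{u_1}$. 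With this correction your induction does close: applying $D_{u_2}$ to a term (jet monomial)$\,\cdot\,\xi_{u_2^m}$ reproduces the same jet monomial times $\xi_{u_2^{m+1}}$, whose lift keeps the same constant value on the cross-section, while the terms where $D_{u_2}$ hits the jet factor, the correction terms of \eqref{prolong-formula}, and the contributions $v^\alpha_{u_2}\,\xi_{v^\alpha u_2^m}$ either feed lower-order, already specified Maurer--Cartan forms or carry lifted factors such as $V^\alpha_{U_2}$ that vanish on the cross-section. So the conclusion and the mechanism are right; only the identification of the source monomials needs fixing, and the appeal to the weight grading is better replaced by this direct cross-section evaluation.
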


At this stage, there is no further Maurer-Cartan form remained unnormalized. This amounts to state that in this branch a complete equivariant moving frame and equivalently a complete normal form is available in the current order five. By the discussion presented at the end of Section \ref{sec-moving-frame}, the holomorphic transformation which brings a CR manifold $M^5$  to this normal form is unique. Summing up the obtained results, we have

\begin{Theorem}
\label{th-branch-1}
Let $M^5\subset\mathbb C^4$ be a five dimensional real-analytic totally nondegenerate CR manifold enjoying the assumption $V^3_{Z^3\oZ}\neq 0$. Then, there exists a unique origin-preserving holomorphic transformation which brings it into the complete normal form:
\begin{equation}
\label{normal-form-Branch-1}
\aligned
v^1&=z\oz+ \sum_{j+k+\sharp\ell\geq 5} \frac{1}{j!\,k!\,\ell!}\,V^1_{Z^j\oZ^k U^\ell} z^j \oz^k u^\ell,
\\
v^2&=\frac{1}{2}\,(z^2\oz+z\oz^2)+\sum_{j+k+\sharp\ell\geq 5} \frac{1}{j!\, k!\, \ell!}\,V^2_{Z^j\oZ^k U^\ell} z^j \oz^k u^\ell,
\\
v^3&=-\frac{i}{2}\,(z^2\oz-z\oz^2)+\frac{1}{6}\,(z^3\oz+z\oz^3)+\frac{1}{2}\,V^3_{Z^2\oZ U_1} z^2\oz u_1+\frac{1}{2}\,V^3_{Z\oZ^2 U_1} z\oz^2 u_1
\\
& \ \ \ \ \ \ \ +\sum_{j+k+\sharp\ell\geq 5} \frac{1}{j! \, k! \, \ell!}\, V^3_{Z^j\oZ^k U^\ell} z^j \oz^k u^\ell,
\endaligned
\end{equation}
with  the cross-section normalizations \eqref{normalization-partial} added by:
\begin{equation}
\label{normalization-branch-1}
\aligned
0={\rm Re} V^1_{Z^4\oZ U_2^l}=V^3_{Z^3\oZ U_3^l}
\endaligned
\end{equation}
for $l\geq 0$.
Moreover, in this branch, the infinitesimal CR automorphism algebra $\frak{aut}_{CR}(M^5)$ is five dimensional and the biholomorphic equivalence problem to $M^5$ can be reduced to an absolute parallelism, namely $\{e\}$-structure, on itself with the structure equations of the lifted horizontal coframe $\omega^z, \omega^{\oz}, \omega^j, j=1, 2, 3$ of $M^5$, obtained by \eqref{eq: order zero structure equations} after applying the already mentioned normalizations and inserting the achieved expressions of the normalized Maurer-Cartan forms.
\end{Theorem}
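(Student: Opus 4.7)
My plan is to assemble the theorem from the partial normal form already established, together with the two key lemmas of Section~\ref{Sec-Branch-1} that specify the remaining unnormalized Maurer--Cartan forms. The starting point is the universal partial normal form \eqref{partial-normal-form} with the normalizations \eqref{normalization-partial}; after this, the only Maurer--Cartan forms still free are ${\rm Re}\,\mu_{U_2^{j+1}}$, $\alpha^2_{U_3^{l+1}}$, $\alpha^3_{U_3^{l+1}}$ for $j,l \geq 0$, as recorded in \eqref{unnormalized-MC-prd-4}. The assumption $V^3_{Z^3\oZ}\neq 0$ of Branch~$1$ allows us to set $V^3_{Z^3\oZ}=1$; plugging this into the recurrence \eqref{VZ3Zbar} for $dV^3_{Z^3\oZ}$ produces a nontrivial linear combination of the real forms $\alpha^2_{U_3}$ and $\alpha^3_{U_3}$ which can be inverted to give explicit expressions for both, as displayed at the start of Section~\ref{Sec-Branch-1}. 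By Lemma~\ref{normalization-alpha2-alpha3-U3-br-1}, the same mechanism iterates: normalizing $V^3_{Z^3\oZ U_3^j}=0$ for $j\geq 1$ specifies all higher-order forms $\alpha^2_{U_3^{j+1}}$ and $\alpha^3_{U_3^{j+1}}$.

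Next I would pass to order five. The idea, justified by the long recurrence for $dV^1_{Z^4\oZ}$ written out in Section~\ref{Sec-Branch-1}, is that after the previous normalizations this recurrence contains the term $4\,{\rm Re}\,\mu_{U_2}$ and no other unnormalized Maurer--Cartan form, so imposing ${\rm Re}\,V^1_{Z^4\oZ}=0$ determines ${\rm Re}\,\mu_{U_2}$ uniquely. Lemma~\ref{lemma-normalization-Re-mu-U2-br-1} then extends the same normalization to all higher analogues ${\rm Re}\,V^1_{Z^4\oZ U_2^j}=0$, which successively fix every ${\rm Re}\,\mu_{U_2^{j+1}}$. At this stage the list \eqref{unnormalized-MC-prd-4} is exhausted, so every Maurer--Cartan form has an explicit expression in terms of the lifted horizontal coframe and the lifted invariants; this is precisely what it means for a complete moving frame to exist, and the resulting image of the moving frame map is the graph of the defining equations \eqref{normal-form-Branch-1} subject to the additional cross-section conditions \eqref{normalization-branch-1}.

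The remaining assertions follow from the general framework of Section~\ref{sec-moving-frame}. The cross-section being coordinate and the action being transitive on it gives the \emph{uniqueness} of the origin-preserving holomorphic transformation bringing $M^5$ into the normal form, while convergence of the transformation is automatic since the moving frame construction operates at the infinitesimal level and produces a genuine (holomorphic) element of $\mathcal{G}$ rather than a formal series. Because no group parameters remain after the order-five normalizations, the moving frame lives on $M^5$ itself and the invariantized horizontal coframe $\omega^z,\omega^{\oz},\omega^1,\omega^2,\omega^3$ forms an absolute parallelism on $M^5$; its structure equations are obtained by substituting the now-explicit expressions of $\mu, \overline\mu, \alpha^j, \gamma^j$ into \eqref{eq: order zero structure equations}. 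Since biholomorphic equivalences of two such $M^5$ correspond bijectively to matchings of these $\{e\}$-structures, and since an $\{e\}$-structure on a $5$-manifold has at most $5$-dimensional symmetry algebra, we conclude $\dim\frak{aut}_{CR}(M^5)\leq 5$; equality holds for the corresponding homogeneous model.

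The step I expect to be the principal obstacle is the verification that the specific normalization ${\rm Re}\,V^1_{Z^4\oZ U_2^j}=0$ indeed isolates ${\rm Re}\,\mu_{U_2^{j+1}}$ with a nonzero constant coefficient independent of $j$ \emph{without re-introducing} any of the forms already normalized in lower orders; this requires a careful symbolic inspection of the prolonged coefficients $\phi^{1;z^4\oz u_2^j}$, analogous to the inspections carried out in Lemmas \ref{lem-observ-1}--\ref{lem-observ-4}, and is exactly the content that Lemma~\ref{lemma-normalization-Re-mu-U2-br-1} is stated to capture. Given that lemma and the preceding normalization chain, the remainder of the proof is essentially bookkeeping.
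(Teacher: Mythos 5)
Your proposal is correct and follows essentially the same route as the paper: normalize $V^3_{Z^3\oZ}=1$ to fix $\alpha^2_{U_3},\alpha^3_{U_3}$ (and their $U_3$-derivatives via Lemma~\ref{normalization-alpha2-alpha3-U3-br-1}), then impose ${\rm Re}\,V^1_{Z^4\oZ U_2^j}=0$ at order five to fix ${\rm Re}\,\mu_{U_2^{j+1}}$ via Lemma~\ref{lemma-normalization-Re-mu-U2-br-1}, exhausting \eqref{unnormalized-MC-prd-4} and invoking the general moving-frame framework of Section~\ref{sec-moving-frame} for uniqueness, convergence and the $\{e\}$-structure. Your closing remark correctly identifies the symbolic verification behind Lemma~\ref{lemma-normalization-Re-mu-U2-br-1} as the computational crux, which is exactly where the paper relies on its explicit (machine-assisted) recurrence computation.
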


Following the Chern-Moser terminology in \cite{Chern-Moser}, we may represent the normal form \eqref{normal-form-Branch-1} as:
\begin{equation}
\label{NM-CM}
v^k=\Phi^k(z,\oz, u):=\sum_{i, j}\,\Phi^k_{ij}(u_1, u_2, u_3)\, z^i \oz^j, \qquad j=1,2,3,
\end{equation}
for some complex power series $\Phi^k_{ij}:=\Phi^k_{ij}(u_1, u_2, u_3)$ with $\Phi^k_{ij}=\overline{\Phi^k_{ji}}$. Via the following auxiliary schematic diagram we attempt to partly describe these functions in terms of the applied normalizations \eqref{normalization-partial}-\eqref{normalization-branch-1}. In each diagram, a filled black circle standing at a point $(i,j)$ amounts to state that the corresponding function $\Phi_{ij}$ vanishes. But, in contrast, the black-white circles like those standing at the points $(2,1)$ and $(2,2)$ of the diagram associated to $\Phi^3$, respectively mean that the coefficients of the monomials $u_2^j u_3^l$ in $\Phi^3_{2,1}$ and the coefficients of the monomials $u_3^l$ in $\Phi^3_{2,2}$ are identically zero for each arbitrary $j, l\geq 0$.

\begin{figure}[h]
  \centering
\parbox{1.4in}{%
\includegraphics[height=4cm, width=4 cm]{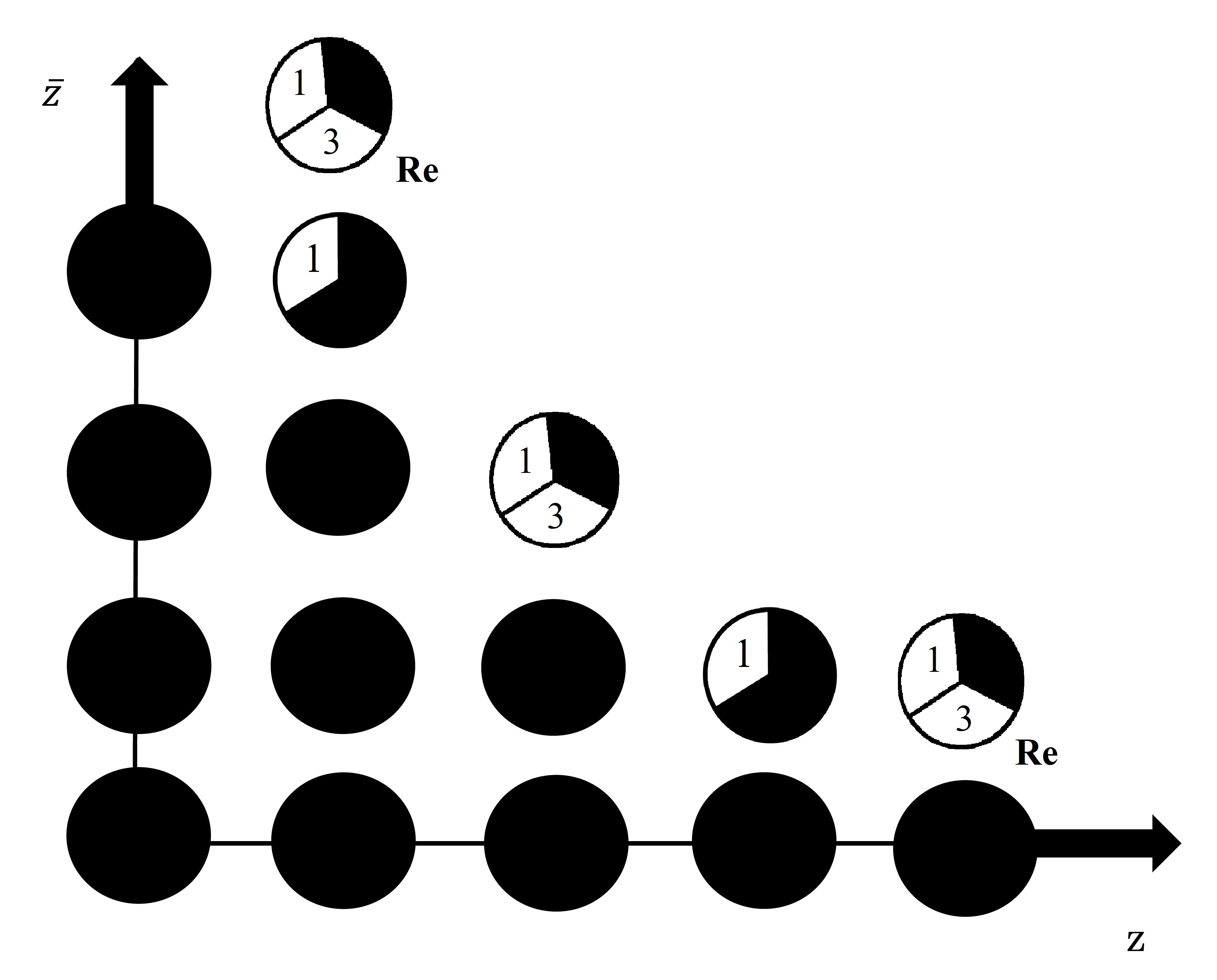}
\caption{$\Phi^1$}}
\qquad\qquad
\parbox{1.4in}{%
\includegraphics[height=4cm, width=4 cm]{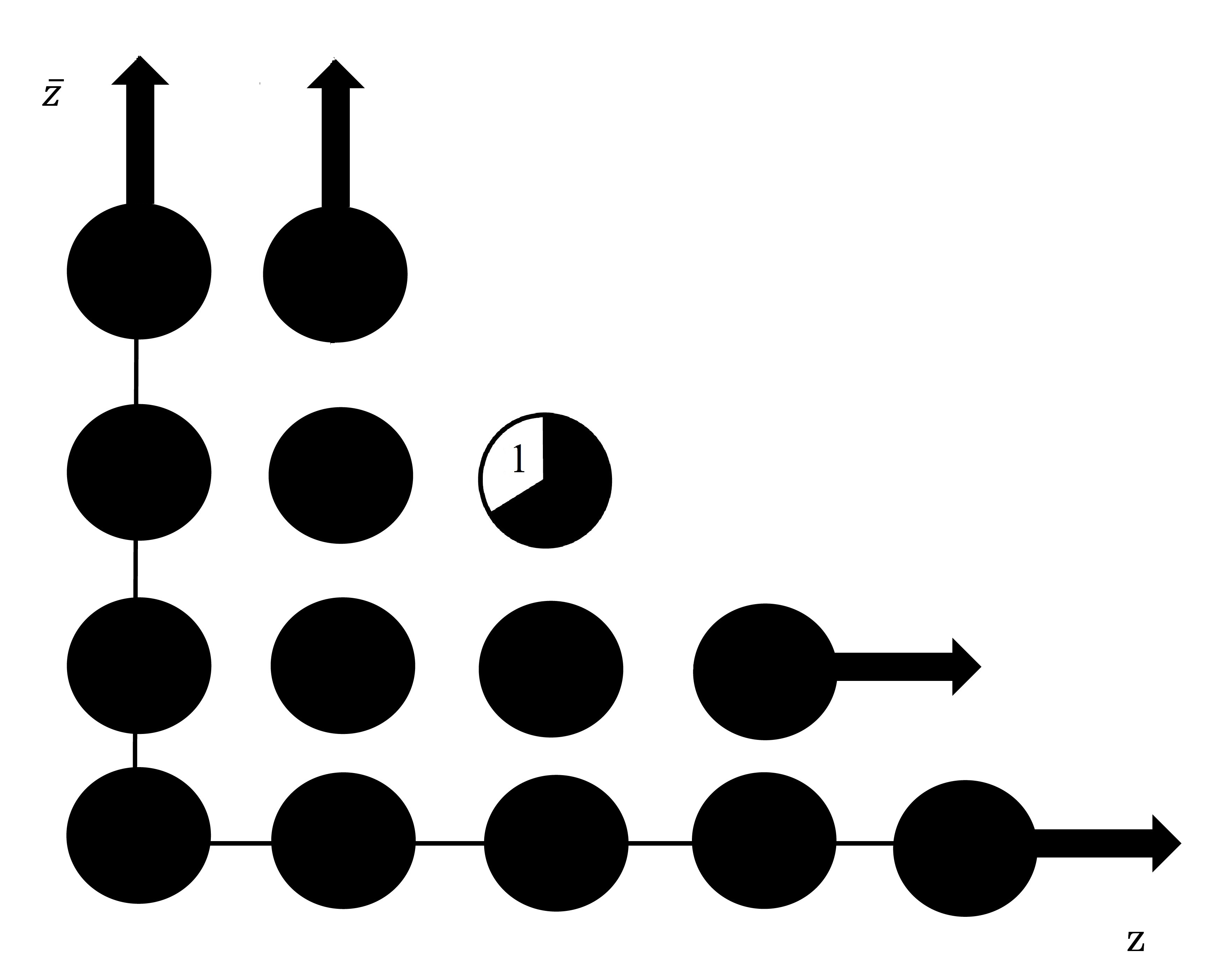}
\caption{ِ$\Phi^2$}}
\qquad\qquad
\begin{minipage}{1.4in}%
\includegraphics[height=4cm, width=4 cm]{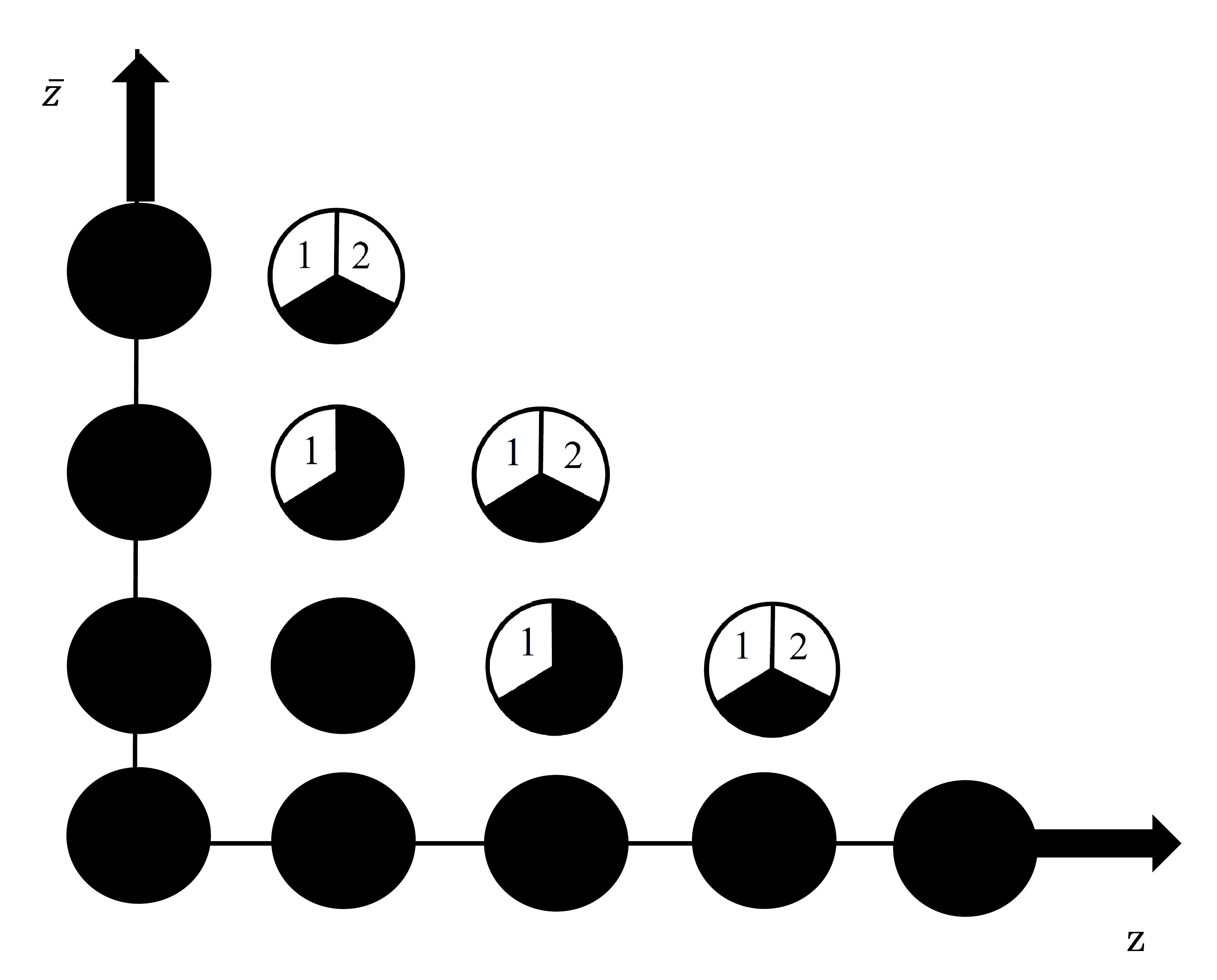}
\caption{$\Phi^3$}
\end{minipage}
\end{figure}
Roughly speaking, a certain cross-section is of {\it minimal order} if it is chosen in such a way that it has as low an order as possible ({\it see} \cite[Definition 3.1]{Olver-2007} for the explicit definition). It is easy to see that our cross-section \eqref{normalization-branch-1} enjoys this property.  By definition (\cite[Definition 5.2]{Olver-2007}), a differential invariant $V^k_{J,j}$ is an {\it edge differential invariant} of our cross-section if $V^k_{J}$ appears in \eqref{normalization-partial}-\eqref{normalization-branch-1}. It is convenient to turn up the edge differential invariants of the cross-section, constructed in this branch.

\begin{Corollary}
The collection of edge differential invariants of the cross-section \eqref{normalization-partial}-\eqref{normalization-branch-1} forms a generating system of differential invariants for the biholomorphic equivalence problem to $5$-dimensional totally nondegenerate CR manifolds $M^5$, belonging to Branch $1$.
\end{Corollary}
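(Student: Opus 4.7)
The plan is to derive the corollary as a direct application of the general Edge Invariant Theorem in the equivariant moving frames formalism (cf.\ Theorem 5.14 of \cite{Olver-2007} in the pseudo-group setting), which asserts that whenever a moving frame arises from a minimal-order cross-section, the set of its edge differential invariants generates the full algebra of differential invariants under invariant differentiation, modulo contributions coming from commutators of the dual invariant derivations. Thus the task reduces to checking the minimality hypothesis for our cross-section and to ruling out extra commutator generators in the present projectable action.

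First I would verify minimality. The cross-section built from \eqref{normalization-partial}-\eqref{normalization-branch-1} was constructed order-by-order by normalizing, at each jet order $n$, only those lifted coordinates whose recurrence relations carry a nonzero coefficient of a hitherto unspecified Maurer-Cartan form, as recorded in Lemmas \ref{lem-observ-1}, \ref{lem-observ-2}, \ref{lem-observ-3}, \ref{lem-observ-4}, \ref{normalization-alpha2-alpha3-U3-br-1} and \ref{lemma-normalization-Re-mu-U2-br-1}. In particular, no normalization could have been performed at a strictly lower order because the corresponding Maurer-Cartan form does not enter any lower-order recurrence relation. This is precisely the content of the minimality condition (Definition 3.1 of \cite{Olver-2007}), already asserted in the paragraph preceding the corollary.

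Next, I would unpack the generation statement by invoking the recurrence formula \eqref{Rec-Rel-General}. Since every Maurer-Cartan form has now been expressed as an invariant linear combination of the horizontal forms $\omega^z, \omega^{\oz}, \omega^j$ with coefficients polynomial in the lifted invariants, the exterior derivative of a non-phantom lifted invariant $V^k_{Z^j\oZ^l U^\ell}$ reduces modulo contact forms to
\[
dV^k_{Z^j\oZ^l U^\ell} \equiv V^k_{Z^{j+1}\oZ^l U^\ell}\,\omega^z + V^k_{Z^j\oZ^{l+1} U^\ell}\,\omega^{\oz} + \sum_{r=1}^3 V^k_{Z^j\oZ^l U^{\ell+e_r}}\,\omega^r + \Psi,
\]
where $\Psi$ is a polynomial expression in strictly lower-order invariants. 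Reading this equation as a formula for $\mathcal D_\bullet V^k_{Z^j\oZ^l U^\ell}$ and iterating, one recovers every non-phantom invariant as an invariant differential polynomial in those lifted invariants that sit on the boundary of the normalized index set, namely the edge invariants.

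The main obstacle I expect is showing that invariant differentiation of edge invariants cannot conjure new generators outside the edge collection via commutator relations among $\mathcal D_z, \mathcal D_{\oz}, \mathcal D_{u_1}, \mathcal D_{u_2}, \mathcal D_{u_3}$. Because the pseudo-group $\mathcal G$ acts projectably on $\mathbb C^4$ (the determining equations \eqref{eq: determining equations} force the target independent coordinates $Z,\oZ,U_j$ to be independent of the dependent ones), each commutator $[\mathcal D_a,\mathcal D_b]$ expands as a linear combination of the $\mathcal D_c$'s with coefficients lying in the algebra generated by lower-order differential invariants already accounted for, in line with the discussion at the end of \S\ref{sec-moving-frame}. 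Once this commutator closure is verified, the Edge Invariant Theorem applies verbatim and yields the desired generating property of the edge differential invariants for the biholomorphic equivalence problem in Branch $1$.
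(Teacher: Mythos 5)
Your argument is correct and is essentially the paper's own proof: the paper likewise observes that the cross-section \eqref{normalization-partial}--\eqref{normalization-branch-1} is of minimal order and then deduces the corollary as a straightforward consequence of the edge-invariant generating theorem, namely \cite[Theorem 7.2]{Olver-2007} (your citation of ``Theorem 5.14'' should be corrected to this). Your additional paragraphs sketching the recurrence-formula and commutator mechanics merely re-derive what that theorem already supplies, so they are harmless but not needed.
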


\proof
It is a straightforward consequence of \cite[Theorem 7.2]{Olver-2007}.
\endproof

Notice that the above generating set is not necessarily minimal.

\section{Branch $2$: $V^3_{Z^3\oZ}= 0$ and $V^3_{Z^2\oZ U_1}\neq 0$}
\label{Sec-Branch-2}

As the second branch, let us assume that in the partial normal form \eqref{partial-normal-form}, we have $V^3_{Z^3\oZ}=0$ but, in contrary, the coefficient $V^3_{Z^2\oZ U_1}$ does not vanish. By the former assumption, it plainly follows from the first recurrence relation in \eqref{VZ3Zbar} that:
\begin{equation}
\label{v-order-5-br-2}
V^3_{Z^4\oZ}=0, \qquad V^3_{Z^3\oZ^2}=-i V^2_{Z^3\oZ^2}, \qquad V^3_{Z^3\oZ U_j}=0, \qquad \qquad j=1, 2, 3.
\end{equation}
Moreover, by normalizing $V^3_{Z^2\oZ U_1}=1$, the recurrence relation of this differential invariant in \eqref{VZ3Zbar} converts into the form:
\begin{equation*}
\aligned
0=dV^3_{Z^2\oZ U_1}=&-\frac{5}{6}\,  V^1_{Z^4\oZ}\, \omega^z+\frac{1}{12} \big(V^3_{Z^2\oZ^2 U_1}+i V^2_{Z^2\oZ^2 U_1}-8 V^1_{Z^3\oZ^2}-2 V^1_{Z^4\oZ}+2 V^2_{Z^4\oZ^2}\big) \, \omega^{\oz}
\\
&+\frac{1}{12} \big(10i V^3_{Z\oZ^2 U_j}-10i V^3_{Z^2\oZ U_j}+V^3_{Z^2\oZ U_1 U_j}-8 V^1_{Z^3\oZ U_j}\big) \,\omega^j+ 2i\, \alpha^2_{U_3}-\frac{2}{3}\, \alpha^3_{U_3}.
\endaligned
\end{equation*}
Taking into account that $\alpha^2_{U^3}$ and $\alpha^3_{U_3}$ are real Maurer-Cartan forms, this equation results in the expressions:
\[
\aligned
\alpha^2_{U_3}&=\frac{i}{48} \big(i\, V^2_{Z^2\oZ^2 U_1}+8 \,V^1_{Z2\oZ^3}-10\, V^1_{Z^4\oZ}+2\, V^1_{Z\oZ^4}-2\, V^2_{Z^2\oZ^4}-V^3_{Z^2\oZ^2 U_1}\big)\, \omega^{z}
\\
&+\frac{i}{48} \big(i\,V^2_{Z^2\oZ^2 U_1}-8\, V^1_{Z^3\oZ^2}-2 \,V^1_{Z^4\oZ}+10\, V^1_{Z\oZ^4}+2\, V^2_{Z^4\oZ^2}+V^3_{Z^2\oZ^2 U_1}\big)\, \omega^{\oz}
\\
&+\frac{i}{48} \big(8\, V^1_{Z\oZ^3 U_j}-8\,V^1_{Z^3\oZ U_j}+V^3_{Z^2\oZ U_1 U_j}-V^3_{Z\oZ^2 U_1 U_j}\big)\, \omega^j,
\\
\alpha^3_{U_3}&=\big(\frac{1}{16}\, V^3_{Z^2\oZ^2 U_1}-\frac{5}{8} \,V^1_{Z^4\oZ}-\frac{i}{16}\, V^2_{Z^2\oZ^2 U_1}-\frac{1}{2}\, V^1_{Z^2\oZ^3}-\frac{1}{8}\, V^1_{Z\oZ^4}+\frac{1}{8} V^2_{Z^2\oZ^4}\big) \,\omega^z
\\
&+\big(\frac{i}{16} V^2_{Z^2\oZ^2 U_1}-\frac{1}{2}\, V^1_{Z^3\oZ^2}-\frac{1}{8}\, V^1_{Z^4\oZ}+\frac{1}{8} \,V^2_{Z^4\oZ^2}+\frac{1}{16}\, V^3_{Z^2\oZ^2 U_1}-\frac{5}{8} \, V^1_{Z\oZ^4}\big)\, \omega^{\oz}
\\
&+\big(\frac{5i}{4}\, V^3_{Z\oZ^2 U_j}-\frac{5i}{4}\, V^3_{Z^2\oZ U_j}-\frac{1}{2}\, V^1_{Z^3\oZ U_j}+\frac{1}{16}\, V^3_{Z^2\oZ U_1 U_j}-\frac{1}{2}\, V^1_{Z\oZ^3 U_j}+\frac{1}{16}\, V^3_{Z\oZ^2 U_1 U_j}\big) \,\omega^j.
\endaligned
\]

Generally, inspecting monomials appeared among the performed computations shows that

\begin{Lemma}
  \label{normalization-alpha2-alpha3-U3-br-2}
In Branch $2$ and for each $j\geq 0$, it is possible to identically specify the Maurer-Cartan forms $\alpha^2_{U_3^{j+1}}$ and $\alpha^3_{U_3^{j+1}}$ by normalizing $V^3_{Z^2\oZ U_1}=1$ and $V^3_{Z^2\oZ U_1 U_3^j}=0$, when $j\neq 0$.
\end{Lemma}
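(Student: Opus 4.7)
The plan is to follow the pattern established in the proofs of Lemmas \ref{lem-observ-1}, \ref{lem-observ-3} and Lemma \ref{normalization-alpha2-alpha3-U3-br-1} in the preceding branch. The common strategy is to trace through the prolongation formula \eqref{prolong-formula} in order to pinpoint the monomials in the prolonged vector coefficient $\phi^{3;z^2\oz u_1 u_3^j}$ whose lifts furnish the two top-order real Maurer-Cartan forms $\alpha^2_{U_3^{j+1}}$ and $\alpha^3_{U_3^{j+1}}$ with a non-degenerate scalar coefficient matrix, then invoke the cross-section normalization $V^3_{Z^2\oZ U_1 U_3^j}=0$ to solve for them.

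Concretely, the base case $j=0$ is already verified in the displayed recurrence relation $dV^3_{Z^2\oZ U_1}=\cdots+2i\,\alpha^2_{U_3}-\tfrac{2}{3}\,\alpha^3_{U_3}$. Since the two scalar coefficients $2i$ and $-\tfrac{2}{3}$ are $\mathbb{R}$-linearly independent as elements of $\mathbb{C}$, and since $\alpha^2_{U_3},\alpha^3_{U_3}$ are real, the scalar complex equation $dV^3_{Z^2\oZ U_1}=0$ splits via its real and imaginary parts into a $2\times 2$ linear system over $\mathbb{R}$ whose determinant is $\tfrac{4}{3}\neq 0$; hence this system is solvable for the two real unknowns, exactly as displayed.

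For the inductive step $j\geq 1$, I would apply the iterated operator $D_{u_3}^{\,j}$ to the seed factor $v^3_{z^2\oz u_1}\big(i\,\eta^2_{u_3}+c\,\eta^3_{u_3}\big)$ sitting inside $\phi^{3;z^2\oz u_1}$, with $c=-\tfrac{2}{3}$ from the base case. Among the Leibniz expansion, only the term where all $j$ derivatives land on the $\eta^k$ factor contributes the top-order forms $\eta^2_{u_3^{j+1}}$ and $\eta^3_{u_3^{j+1}}$, whose lifts are precisely $\alpha^2_{U_3^{j+1}}$ and $\alpha^3_{U_3^{j+1}}$. Since $V^3_{Z^2\oZ U_1}$ has already been normalized to $1$ under the Branch $2$ assumption, the resulting contribution to the recurrence of $V^3_{Z^2\oZ U_1 U_3^j}$ is a nonzero multiple of the base-case pair, namely $2i\,\alpha^2_{U_3^{j+1}}-\tfrac{2}{3}\,\alpha^3_{U_3^{j+1}}$ up to a combinatorial factor. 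All other Leibniz pieces, as well as all prolongation cross-terms stemming from the already-normalized lifted invariants, involve strictly lower-order Maurer-Cartan forms that have either been specified previously or lie on the horizontal side of the equation.

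The principal bookkeeping obstacle is to confirm that no form $\alpha^k_\bullet$ normalized in Lemmas \ref{lem-observ-1}--\ref{lem-observ-4} or in Lemma \ref{normalization-alpha2-alpha3-U3-br-2} itself (at lower $u_3$-weight) re-injects, after substitution, any further contribution to $\alpha^2_{U_3^{j+1}}$ or $\alpha^3_{U_3^{j+1}}$; this is immediate since all those normalizations involve only Maurer-Cartan forms of strictly smaller order in $u_3$, and the prolongation process cannot raise the $u_3$-weight on group-jet variables. Once this is checked, setting $V^3_{Z^2\oZ U_1 U_3^j}=0$ (so that $dV^3_{Z^2\oZ U_1 U_3^j}=0$) reduces to the same $2\times 2$ real linear system as in the base case, and we read off explicit expressions for both $\alpha^2_{U_3^{j+1}}$ and $\alpha^3_{U_3^{j+1}}$ as linear combinations of the horizontal lifted forms $\omega^z,\omega^{\oz},\omega^j$.
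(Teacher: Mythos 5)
Your proposal is correct and follows essentially the paper's own route: the base case $j=0$ is exactly the displayed recurrence relation $0=dV^3_{Z^2\oZ U_1}=\cdots+2i\,\alpha^2_{U_3}-\tfrac{2}{3}\,\alpha^3_{U_3}$ solved via its real and imaginary parts, and for $j\geq 1$ the paper likewise argues by ``inspecting the monomials'' produced by the prolongation formula \eqref{prolong-formula}, i.e.\ by tracking the seed monomial $v^3_{z^2\oz u_1}\eta^k_{u_3}$ into $\phi^{3;z^2\oz u_1 u_3^j}$ exactly as in the proofs of Lemmas \ref{lem-observ-1} and \ref{lem-observ-3}, with lower-order contributions already normalized. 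No genuine divergence from the paper's argument.
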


Similar to the case of Branch $1$ and up to the end of order four, the only yet unnormalized Maurer-Cartan forms are those of the real form ${\rm Re} \,\mu_{U_2^{j+1}}$. Let us proceed into the next order to check the possibility of normalizing these Maurer-Cartan forms.

\subsection{Order 5 -- Branch 2}

In this branch and as suggested by the performed computations, we normalize to zero the imaginary part of the lifted invariant $V^2_{Z^3\oZ^2}$. We have the following long recurrence expression:
\[
\aligned
dV^2_{Z^3\oZ^2}&=\big(\frac{5}{6}\, V^1_{Z^2\oZ^3}-\frac{5}{24}\, V^2_{Z^2\oZ^4}+\frac{5}{24}\, V^1_{Z\oZ^4}-\frac{23}{8}\, V^1_{Z^3\oZ^2}+\frac{i}{3}\, V^2_{Z^3\oZ^2} V^3_{Z2\oZ^2}+\frac{i}{24} \,V^2_{Z^2 \oZ^2 U_1} V^2_{Z^3\oZ^2}
\\
&-\frac{1}{12}\, V^2_{Z^3\oZ^2} V^2_{Z^2\oZ^4}+\frac{1}{3}\, V^2_{Z^3\oZ^2} V^1_{Z^2\oZ^3}-\frac{1}{24}\, V^3_{Z^2\oZ^2 U_1} V^2_{Z^3\oZ^2}+\frac{5}{12}\, V^2_{Z^3\oZ^2} V^1_{Z^4\oZ}+\frac{1}{12}\, V^2_{Z^3\oZ^2} V^1_{Z\oZ^4}
\\
&-\frac{19}{24}\, V^1_{Z^4\oZ}-\frac{1}{24}\, V^3_{Z^2\oZ^2 U_1}-V^2_{Z^3\oZ^2} V^2_{Z^3\oZ}+V^2_{Z^4\oZ^2}+\frac{13i}{24}\, V^2_{Z^2\oZ^2 U_1}\big) \,\omega^{z}
\endaligned
\]
\[
\aligned
&+\big(\frac{1}{24}\, V^1_{Z^4\oZ}+\frac{25}{24}\, V^1_{Z\oZ^4}-\frac{11}{6}\, V^1_{Z^3\oZ^2}-\frac{15}{8} \,V^1_{Z^2\oZ^3}-\frac{i}{3}\, V^2_{Z^3\oZ^2} V^3_{Z^2\oZ^2}-\frac{i}{24}\, V^2_{Z^2\oZ^2 U_1} V^2_{Z^3\oZ^2}
\\
&+\frac{1}{12}\, V^2_{Z^3\oZ^2} V^1_{Z^4\oZ}+\frac{1}{3}\, V^2_{Z^3 \oZ^2} V^1_{Z^3 \oZ^2}+\frac{5}{12}\, V^2_{Z^3 \oZ^2} V^1_{Z\oZ^4}-\frac{1}{12}\, V^2_{Z^3\oZ^2} V^2_{Z^4\oZ^2}+V^2_{Z^3\oZ^3}+\frac{1}{24} \,V^3_{Z^2\oZ^2 U_1}
\\
&-\frac{1}{24} \,V^2_{Z^4\oZ^2}-V^2_{Z^3\oZ^2} V^2_{Z^2\oZ^2}-\frac{1}{24} \,V^3_{Z^2\oZ^2 U_1} V^2_{Z^3\oZ^2}+\frac{13i}{24}\, V^2_{Z^2\oZ^2 U_1}\big) \,\omega^{\oz}
\\
&+\big(\frac{5}{6} \,V^1_{Z\oZ^3 U_j}-\frac{5}{6}\, V^1_{Z^3\oZ U_j}-\frac{15}{8}\, V^1_{Z^2 \oZ^2 U_j}+\frac{i}{2}\, V^2_{Z^3\oZ^2} V^3_{Z^2\oZ U_j}-\frac{i}{2}\, V^2_{Z^3\oZ^2} V^3_{Z\oZ^2 U_j}+\frac{1}{3}\, V^2_{Z^3\oZ^2} V^1_{Z\oZ^3 U_j}
\\
&+\frac{1}{3}\, V^2_{Z^3\oZ^2} V^1_{Z^3\oZ U_j}-\frac{1}{24} \,V^3_{Z\oZ^2 U_1 U_j}+\frac{1}{24}\, V^3_{Z^2 \oZ U_1 U_j}-V^2_{Z^3\oZ^2} V^2_{Z^2\oZ U_j}-\frac{1}{24}\, V^2_{Z^3\oZ^2} V^3_{Z\oZ^2 U_1 U_j}
\\
&+V^2_{Z^3\oZ^2 U_j}-\frac{1}{24}\, V^2_{Z^3\oZ^2} V^3_{Z^2\oZ U_1 U_j}\big)\, \omega^j-4i \, {\rm Re} \mu_{U_2}.
\endaligned
\]

Clearly, this equation brings the expression of the real Maurer-Cartan form ${\rm Re} \,\mu_{U_2}$. More generally, a close inspection of the performed computations reveals that

\begin{Lemma}
In Branch $2$ and for each $j\geq 0$, it is possible to specify the real Maurer-Cartan form ${\rm Re} \,\mu_{U_2^{j+1}}$ by normalizing ${\rm Im}\, V^2_{Z^3\oZ^2 U_2^j}=0$.
\end{Lemma}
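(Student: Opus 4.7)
My plan is to follow the pattern used in the proofs of Lemmas \ref{lem-observ-1}--\ref{lem-observ-4}, namely to identify a specific monomial in the prolonged vector component $\phi^{2;z^3\oz^2 u_2^j}$ whose lift, combined with its conjugate, contributes a nonzero constant multiple of ${\rm Re}\,\mu_{U_2^{j+1}}$ to the recurrence relation of $V^2_{Z^3\oZ^2 U_2^j}$. Since $\overline{V^2_{Z^3\oZ^2 U_2^j}}=V^2_{Z^2\oZ^3 U_2^j}$, subtracting the recurrence relations of these two conjugate lifted invariants isolates the recurrence for the real differential invariant ${\rm Im}\,V^2_{Z^3\oZ^2 U_2^j}$, in which the combination ${\rm Re}\,\mu_{U_2^{j+1}}=\tfrac{1}{2}(\mu_{U_2^{j+1}}+\omu_{U_2^{j+1}})$ appears with a nonzero imaginary coefficient.

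The key step is to trace, via repeated application of the prolongation formula \eqref{prolong-formula} in the $u_2$-direction, the monomial $v^2_{z^2\oz}\cdot\oxi_{u_2^{j+1}}$ (and its conjugate $v^2_{z\oz^2}\cdot\xi_{u_2^{j+1}}$) through $\phi^{2;z^3\oz^2 u_2^j}$. Each $u_2$-iteration contributes, via the summand $-(D_{u_2}\oxi)\,v^2_{\cdot,\oz}$, an additional $\oxi_{u_2}$ factor without affecting the coefficient of $v^2_{z^2\oz}$. Under the cumulative normalizations of the partial normal form \eqref{partial-normal-form}, this coefficient stabilizes as a nonzero rational constant, whose lift is precisely a nonzero constant multiple of $\omu_{U_2^{j+1}}$. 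The analogous conjugate branch contributes a nonzero constant multiple of $\mu_{U_2^{j+1}}$, and the two combine to form a nonzero constant multiple of ${\rm Re}\,\mu_{U_2^{j+1}}$. The base case $j=0$ is already confirmed by the explicit appearance of $-4i\,{\rm Re}\,\mu_{U_2}$ in the recurrence of $dV^2_{Z^3\oZ^2}$ computed just above the lemma statement.

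The main obstacle I anticipate is ruling out any cancellation of this leading monomial by the many secondary terms that accumulate in $\phi^{2;z^3\oz^2 u_2^j}$ after substituting the previously normalized Maurer-Cartan forms, such as ${\rm Im}\,\mu_{U_2^{j'+1}}$, $\mu_{U_2^{j'}U_3^{l+1}}$, $\alpha^1_{U_2^{j'+1}U_3^l}$ and the Branch-$2$-specific $\alpha^2_{U_3^{l+1}}, \alpha^3_{U_3^{l+1}}$ with $j'<j$. Since those substitutions are linear in the remaining Maurer-Cartan forms and are fixed by strictly lower-order data, they cannot produce new factors of $\xi_{u_2^{j+1}}$ or $\oxi_{u_2^{j+1}}$; thus the nonvanishing of the coefficient of ${\rm Re}\,\mu_{U_2^{j+1}}$ is structural. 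I would make this rigorous by a combinatorial induction on $j$, mirroring the reasoning of the preceding lemmas and, for higher orders, supported by the symbolic {\sc Maple} verification that the authors invoke throughout the paper. Solving the resulting linear equation then yields the asserted specification of ${\rm Re}\,\mu_{U_2^{j+1}}$ in terms of lower-order lifted invariants.
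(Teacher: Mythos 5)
Your proposal is correct and follows essentially the same route as the paper: the paper establishes the case $j=0$ by the explicit order-five recurrence for $dV^2_{Z^3\oZ^2}$ (whose final term is $-4i\,{\rm Re}\,\mu_{U_2}$) and then asserts the general case by "a close inspection of the performed computations," i.e.\ exactly the monomial-tracing argument through the prolongation formula used in Lemmas \ref{lem-observ-1}--\ref{lem-observ-4} that you spell out. Your additional care about conjugation and about cancellations by previously normalized Maurer--Cartan forms only makes explicit what the paper leaves to the symbolic ({\sc Maple}) verification.
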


As one sees and similar to Branch 1, all Maurer-Cartan forms have gained their normalized expressions at this order. Thus, we have succeeded again in constructing a complete moving frame and equivalently a complete normal form at this stage. It follows from this possibility of constructing this moving frame that the holomorphic transformation which brings each CR manifold to this normal form is unique. Summing up the results, we have

\begin{Theorem}
  \label{th-branch-2}
Let $M^5\subset\mathbb C^4$ be a five dimensional real-analytic totally nondegenerate CR manifold enjoying two assumptions $V^3_{Z^3\oZ}=0$ and $V^3_{Z^2\oZ U_1}\neq 0$. Then, there exists a unique origin-preserving holomorphic transformation that brings $M^5$ into the complete normal form:
\begin{equation}
\label{normal-form-branch-2}
\aligned
v^1&=z\oz+ \sum_{j+k+\sharp\ell\geq 5} \frac{1}{j!\,k!\,\ell!}\,V^1_{Z^j\oZ^k U^\ell} z^j \oz^k u^\ell,
\\
v^2&=\frac{1}{2}\,(z^2\oz+z\oz^2)+\sum_{j+k+\sharp\ell\geq 5} \frac{1}{j!\,k!\,\ell!}\,V^2_{Z^j\oZ^k U^\ell} z^j \oz^k u^\ell,
\\
v^3&=-\frac{i}{2}\,(z^2\oz-z\oz^2)+\frac{1}{2}\,(z^2\oz u_1+ z\oz^2 u_1)+\sum_{j+k+\sharp\ell\geq 5} \frac{1}{j!\,k!\,\ell!}\,V^3_{Z^j\oZ^k U^\ell} z^j \oz^k u^\ell,
\endaligned
\end{equation}
where, in addition to the equations \eqref{v-order-5-br-2}, we have the cross-section normalizations \eqref{normalization-partial} together with:
\begin{equation}
\label{normalization-branch-2}
\aligned
0={\rm Im} V^2_{Z^3\oZ^2 U_2^j}=V^3_{Z^2\oZ U_1 U_3^{j+1}}
\endaligned
\end{equation}
for $j\geq 0$. Moreover, in this branch, the Lie algebra $\frak{aut}_{CR}(M^5)$ is five dimensional and the biholomorphic equivalence problem to $M^5$ can be reduced to an absolute parallelism, namely $\{e\}$-structure, on itself with the structure equations of the horizontal coframe $\omega^z, \omega^{\oz}, \omega^j, j=1, 2, 3$ of $M^5$, obtained by \eqref{eq: order zero structure equations} after applying the already mentioned normalizations and inserting the achieved expressions of the normalized Maurer-Cartan forms.
\end{Theorem}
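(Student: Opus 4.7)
The plan is to assemble the theorem by collecting the successive normalizations already carried out in Section~\ref{Sec-Branch-2} and verifying that no Maurer--Cartan form remains unnormalized, so that the resulting moving frame is complete and determines a unique origin-preserving biholomorphism.

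First, I would begin from the partial normal form \eqref{partial-normal-form} and the cross-section \eqref{normalization-partial}, and impose the Branch~$2$ hypotheses $V^3_{Z^3\oZ}=0$ and $V^3_{Z^2\oZ U_1}\neq 0$. Setting $V^3_{Z^3\oZ}=0$ in the first recurrence relation of \eqref{VZ3Zbar} forces \eqref{v-order-5-br-2}, which in particular reduces $V^3$ to the shape appearing in the third line of \eqref{normal-form-branch-2}. Normalizing $V^3_{Z^2\oZ U_1}=1$ together with $V^3_{Z^2\oZ U_1 U_3^{j+1}}=0$ (for $j\geq 0$) specifies all Maurer--Cartan forms $\alpha^2_{U_3^{j+1}}$ and $\alpha^3_{U_3^{j+1}}$, by the lemma stated just after those computations. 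Combined with the order-$\leq 4$ normalizations of Lemmas~\ref{lem-observ-1}--\ref{lem-observ-4}, this leaves only the real forms $\mathrm{Re}\,\mu_{U_2^{j+1}}$ unnormalized.

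Next, I would move to order five and impose $\mathrm{Im}\,V^2_{Z^3\oZ^2 U_2^j}=0$ for all $j\geq 0$. The long displayed recurrence relation for $dV^2_{Z^3\oZ^2}$ explicitly isolates $\mathrm{Re}\,\mu_{U_2}$ with a nonzero coefficient, and the symbolic pattern of the prolongation formula \eqref{prolong-formula} (exactly as in the proofs of Lemmas~\ref{lem-observ-1}--\ref{lem-observ-4}) shows that each $\mathrm{Re}\,\mu_{U_2^{j+1}}$ is likewise specified by the corresponding higher-order normalization. At this point every Maurer--Cartan form in the reduced basis \eqref{unnormalized-MC-prd-4} has been given an explicit expression as a linear combination of the invariant horizontal forms $\omega^z,\omega^{\oz},\omega^j$ with coefficients that are functions of the non-phantom lifted invariants, so a complete equivariant moving frame $\rho^{(\infty)}$ is in hand.

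Having a complete moving frame, I would then invoke the general principle recalled at the end of Section~\ref{sec-moving-frame}: the Taylor series of $M^5$ at the origin is mapped to its invariantized counterpart by a \emph{unique} element $g\in\mathcal{G}$, and because the cross-section contains the base point, this $g$ is origin-preserving. The resulting Taylor expansion is precisely \eqref{normal-form-branch-2} with the normalizations \eqref{normalization-partial}, \eqref{v-order-5-br-2} and \eqref{normalization-branch-2}. Uniqueness of the biholomorphism is equivalent to freeness of the prolonged $\mathcal{G}^{(\infty)}$-action at the chosen cross-section, which is automatic from the fact that every group-jet parameter has been pinned down.

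Finally, for the $\{e\}$-structure and the dimension count of $\frak{aut}_{CR}(M^5)$, I would substitute the normalized expressions of all Maurer--Cartan forms into the structure equations \eqref{eq: order zero structure equations}. The five 1-forms $\omega^z,\omega^{\oz},\omega^1,\omega^2,\omega^3$ become a coframe on $M^5$ itself with structure functions written entirely in terms of basic invariants, giving the announced absolute parallelism; standard theory then bounds $\dim\frak{aut}_{CR}(M^5)\leq 5$, and the bound is attained because the pseudo-group action on the cross-section is free. The expected main obstacle is bookkeeping, namely verifying that the symbolic argument used to extend Lemmas~\ref{normalization-alpha2-alpha3-U3-br-2} and its order-five analogue to all higher weights is actually uniform in $j$; this is handled by tracking the leading monomials $-v^3_{z^2\oz u_1}\overline{\xi}_{u_1 u_3^j}$ and the $\mathrm{Im}$-part of the $\overline{\xi}_{u_2^{j+1}}$-contribution in the prolongation formula, exactly as in the proof of Lemma~\ref{lem-observ-3}.
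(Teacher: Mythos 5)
Your proposal is correct and follows essentially the same route as the paper: impose the Branch~2 hypotheses, extract \eqref{v-order-5-br-2} from \eqref{VZ3Zbar}, normalize $V^3_{Z^2\oZ U_1}=1$ and $V^3_{Z^2\oZ U_1 U_3^{j+1}}=0$ to fix $\alpha^2_{U_3^{j+1}},\alpha^3_{U_3^{j+1}}$, then use ${\rm Im}\,V^2_{Z^3\oZ^2 U_2^j}=0$ at order five to fix ${\rm Re}\,\mu_{U_2^{j+1}}$, and conclude uniqueness and the $\{e\}$-structure from the completeness of the resulting moving frame. The only cosmetic difference is your explicit (plausible but unverified) identification of the leading monomials behind the uniform-in-$j$ lemmas, which the paper leaves to its symbolic computations.
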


Representing the achieved normal form \eqref{normal-form-branch-2} as \eqref{NM-CM}, the following schematic diagrams describe partly the associated functions $\Phi^k_{ij}$, $k=1, 2, 3$, in terms of the applied normalizations \eqref{normalization-partial}-\eqref{normalization-branch-2}:
\begin{figure}[h]
    \centering
\parbox{1.4in}{%
\includegraphics[height=4cm, width=4 cm]{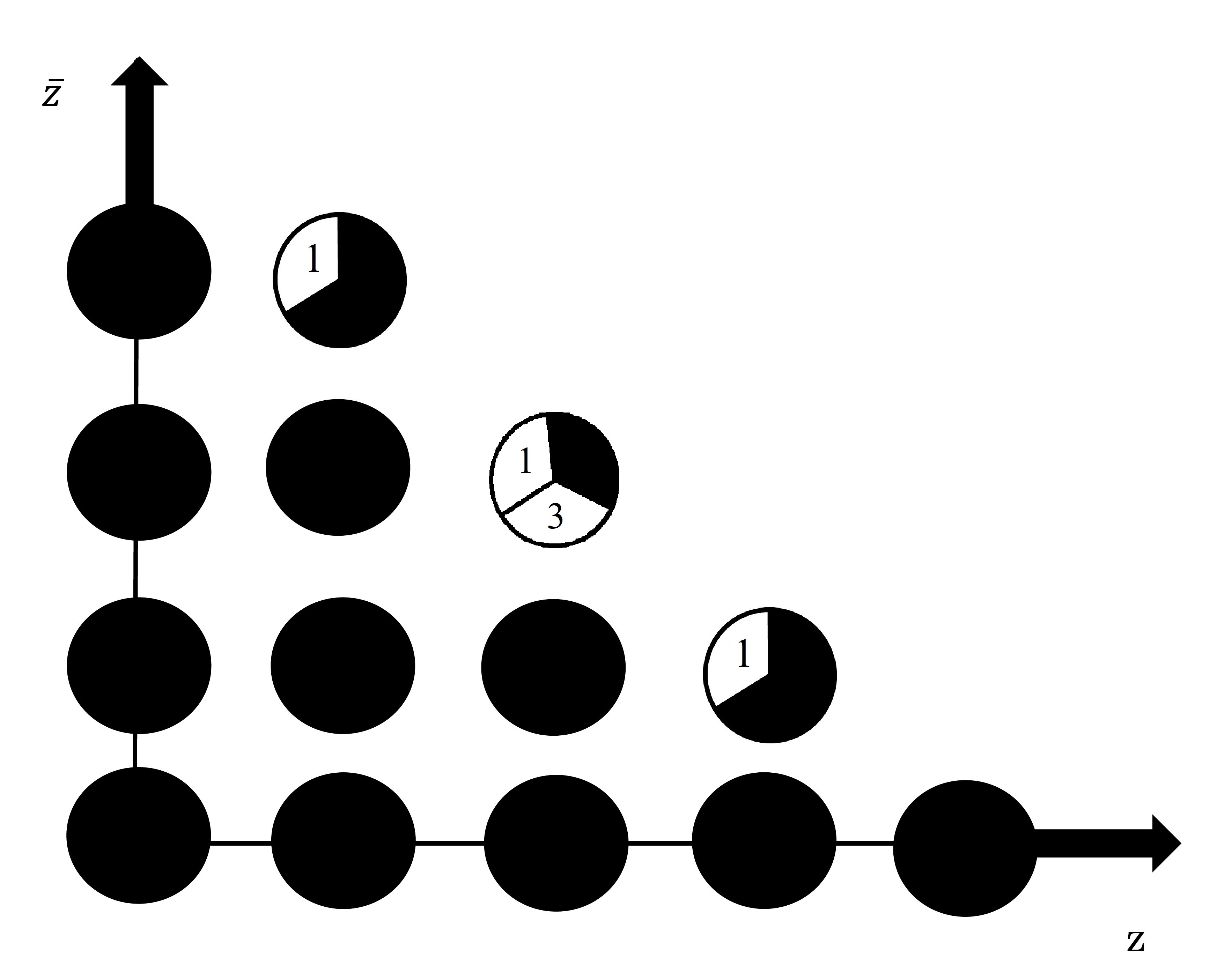}
\caption{$\Phi^1$}}
\qquad\qquad
\parbox{1.4in}{%
\includegraphics[height=4cm, width=4 cm]{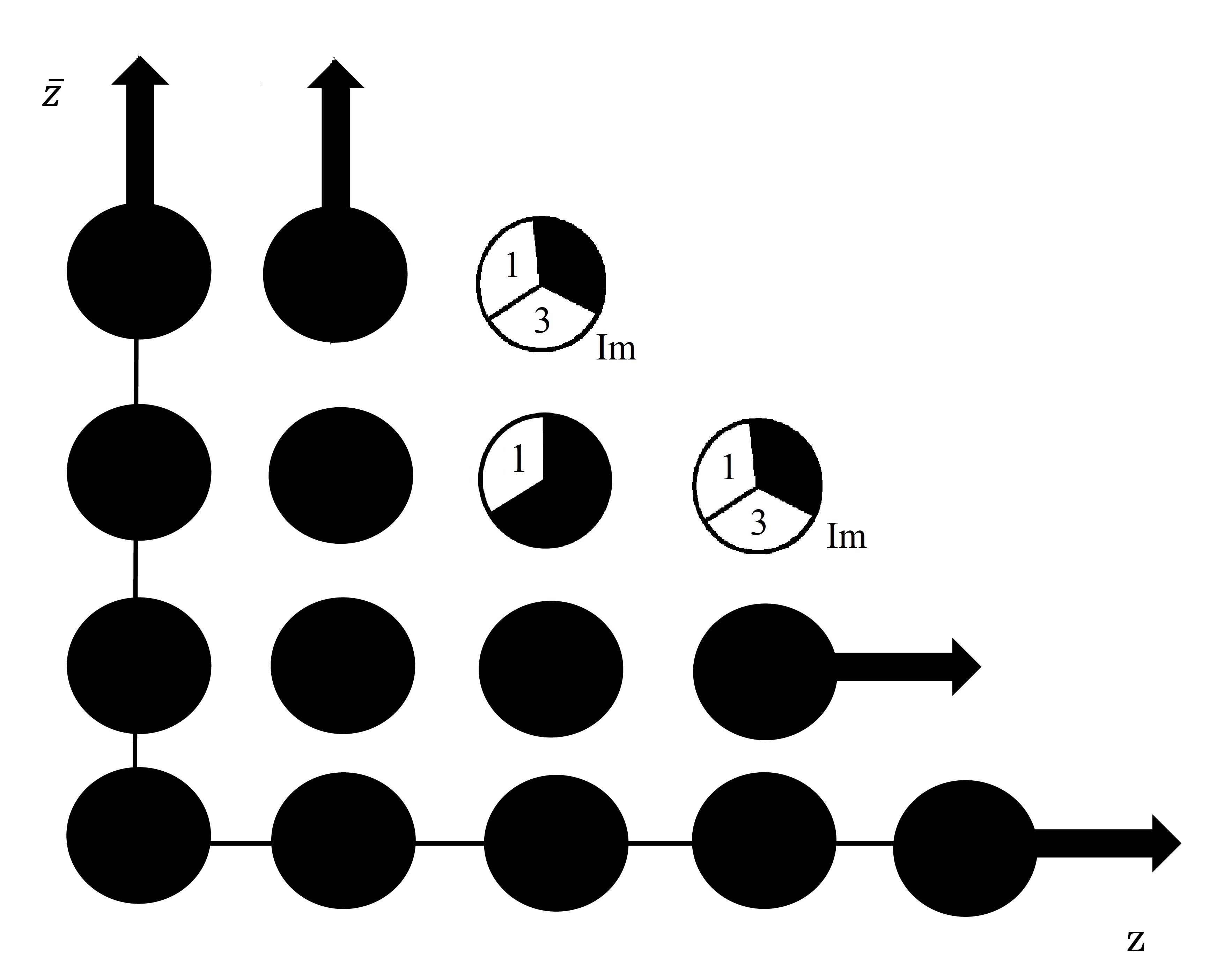}
\caption{ِ$\Phi^2$}}
\qquad\qquad
\begin{minipage}{1.4in}%
\includegraphics[height=4cm, width=4 cm]{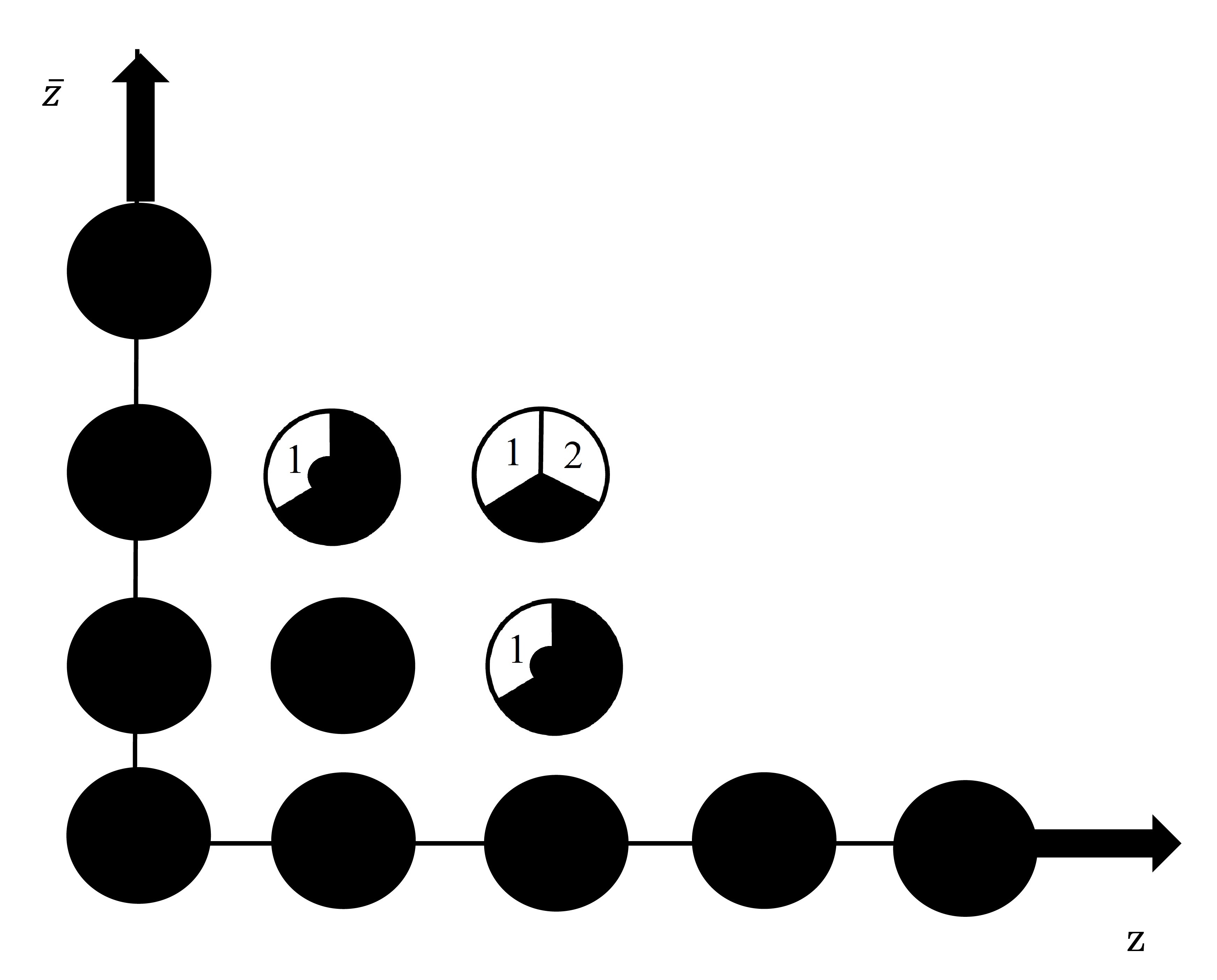}
\caption{$\Phi^3$}
\end{minipage}
\end{figure}

The interpretation of the circles at each point of these diagrams is as before, but it may be necessary to state that the circle standing at the point $(2,1)$ in the diagram of $\Phi^3$ amounts to the fact that $\Phi^3_{21}$ not only does not include the monomials $u_2^j u_3^l$ for $j, l\geq 0$ but also monomials of the form $u_1 u_3^l$s are absent in it.

Our cross-section \eqref{normalization-partial}-\eqref{normalization-branch-2} associated to this branch is minimal, as well. Thus, we have:

\begin{Corollary}
The collection of edge differential invariants of the cross-section \eqref{normalization-partial}-\eqref{normalization-branch-2} forms a generating system of differential invariants for the biholomorphic equivalence problem to $5$-dimensional totally nondegenerate CR manifolds $M^5$, belonging to Branch $2$.
\end{Corollary}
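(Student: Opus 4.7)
The proof will be a direct application of Theorem 7.2 of \cite{Olver-2007}, exactly as in the preceding corollary for Branch 1. To invoke that theorem I need two ingredients: the cross-section used to build the moving frame must be of minimal order, and then its edge differential invariants automatically generate the whole algebra of differential invariants under the action of the invariant differential operators dual to $\iota(\omega^z),\iota(\omega^{\oz}),\iota(\omega^j)$.

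The plan is first to verify minimality of the cross-section \eqref{normalization-partial}-\eqref{normalization-branch-2}. Minimality in the sense of \cite[Definition 3.1]{Olver-2007} means that at each order $n$, the normalizations imposed are the lowest-order ones that make the prolonged action free on the relevant open subset of ${\rm J}^n$. Walking through the construction: the zeroth, first, second and third order normalizations are dictated by the totally nondegenerate defining equations \eqref{normalization-5cubic} together with the universal removal of pluriharmonic terms described in Lemmas~\ref{lem-observ-1}, \ref{lem-observ-2} and \ref{lem-observ-3}; none of these can be lowered. At order four the splitting $V^3_{Z^3\oZ}=0$, $V^3_{Z^2\oZ U_1}=1$ together with the normalizations of Lemma~\ref{lem-observ-4} exhaust all Maurer-Cartan forms that can be solved for at that order, and the reduction of the remaining forms to ${\rm Re}\,\mu_{U_2^{j+1}},\alpha^2_{U_3^{j+1}},\alpha^3_{U_3^{j+1}}$ in \eqref{unnormalized-MC-prd-4} shows that no lower-order normalization could have eliminated them. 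Finally, at order five the normalizations ${\rm Im}\,V^2_{Z^3\oZ^2 U_2^j}=0$ and $V^3_{Z^2\oZ U_1 U_3^{j+1}}=0$ precisely absorb the last real Maurer-Cartan forms ${\rm Re}\,\mu_{U_2^{j+1}}$ and $\alpha^2_{U_3^{j+1}},\alpha^3_{U_3^{j+1}}$. Thus each normalization sits at the smallest order at which the corresponding group parameter (equivalently, the corresponding Maurer-Cartan form) can possibly be normalized, establishing minimality.

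Once minimality is in hand, \cite[Definition 5.2]{Olver-2007} identifies the edge invariants as those $V^k_{J,j}$ with $V^k_J$ among the normalized invariants listed in \eqref{normalization-partial}-\eqref{normalization-branch-2}. Theorem 7.2 of \cite{Olver-2007} then states that for a minimal order cross-section the edge invariants, together with the invariant differentiations, generate the entire differential invariant algebra. Translated to the present setting, every differential invariant of the biholomorphic action of $\mathcal G$ on $5$-dimensional totally nondegenerate CR manifolds belonging to Branch~2 can be obtained from the edge invariants of \eqref{normalization-partial}-\eqref{normalization-branch-2} by applying iterated invariant differentiations $\mathcal D_{\omega^z},\mathcal D_{\omega^{\oz}},\mathcal D_{\omega^j}$ dual to the lifted horizontal forms.

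The main conceptual obstacle is the minimality check: because the construction proceeds through several lemmas (Lemmas~\ref{lem-observ-1}-\ref{lem-observ-4} and the Branch~2 lemmas) rather than through a single clean argument, one must carefully track that no Maurer-Cartan form could have been specified at a strictly lower order than it actually was. Once this bookkeeping is done, the remaining assertion is a direct quotation of \cite[Theorem 7.2]{Olver-2007}, and no further computation is required.
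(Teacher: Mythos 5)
Your proposal matches the paper's argument: the paper likewise observes that the cross-section \eqref{normalization-partial}--\eqref{normalization-branch-2} is of minimal order and then deduces the generating property as a straightforward consequence of \cite[Theorem 7.2]{Olver-2007}, exactly as for Branch 1. Your additional order-by-order bookkeeping of the minimality check is a harmless elaboration of what the paper asserts more briefly.
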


\section{Branch $3$: $V^3_{Z^3\oZ}=V^3_{Z^2\oZ U_1}=0$}
\label{Sec-Branch-3}

Now we arrive at Branch $3$, where the coefficients of $z^3\oz$ and $z^2\oz u_1$ in the partial moving frame \eqref{normalization-partial} are identically zero.
This branch includes Beloshapka's cubic model $M^5_{\tt c}$:
\begin{equation}
\label{model-cubic}
\aligned
v^1&=z\oz,
\\
v^2&=z^2\oz+z\oz^2,
\\
v^3&=i\,z^2\oz-i\,z\oz^2.
\endaligned
\end{equation}
The Lie algebra  $\frak{aut}_{CR}(M^5_{\sf c})$  of infinitesimal CR automorphisms of $M^5_{\sf c}$ is of the maximum possible dimension $7$ \cite[Proposition 3.2]{5-cubic} and since none of the CR manifolds $M^5$ in the former branches exhibits such CR symmetry dimension, then submanifolds equivalent to this model may possibly emerge in this branch.

Losing the benefit of the lifted differential invariants $V^3_{Z^3\oZ}$ and $V^3_{Z^2\oZ U_1}$ for normalizing further Maurer-Cartan forms in order four may convince oneself to expect more complicated computations at this branch. Before proceeding into the next order, we point it out that by equating to zero the coefficients of the linearly independent lifted horizontal forms $\omega^z, \omega^{\oz}, \omega^j, j=1, 2, 3,$ in the recurrence relations of these lifted differential invariants in  \eqref{VZ3Zbar}, we receive the following helpful equations:
\begin{equation}
\label{v-order-5}
\aligned
\ \ \ \ \ \ V^3_{Z^4\oZ}&=0, \qquad V^3_{Z^3\oZ^2}=-i V^2_{Z^3\oZ^2}, \qquad V^3_{Z^3\oZ U_j}=0,
\\
V^1_{Z^4\oZ}&=0, \qquad V^2_{Z^4\oZ^2}=4 V^1_{Z^3\oZ^2}-\frac{1}{2} V^3_{Z^2\oZ^2 U_1}-\frac{i}{2} V^2_{Z^2\oZ^2 U_1},
\\
V^3_{Z^2\oZ U_1 U_1}&=8 V^1_{Z^3\oZ U_1}, \qquad V^3_{Z^2\oZ U_1 U_2}=0, \qquad V^3_{Z^2\oZ U_1 U_3}=0.
\endaligned
\end{equation}

Recall that, up to the end of order four, the remained unnormalized Maurer-Cartan forms are those visible in \eqref{unnormalized-MC-prd-4}.

\subsection{Order 5 -- Branch 3}

In comparison to the former orders, computations in order five grow {\it explosively}. Then, for the sake of brevity, we will present from now on upcoming long expressions modulo the lifted horizontal coframe $\omega^z, \omega^{\oz}, \omega^j, j=1, 2, 3$. Nevertheless, the full expressions and intermediate computations are accessible in the {\sc Maple} worksheet \cite{Maple}.
Let us start with the recurrence relation of $V^1_{Z^3\oZ^2}$. After a large amount of simplification, we get:
\[
\aligned
dV^1_{Z^3\oZ^2}&=V^1_{Z^3\oZ^2} \big(2i\, \alpha^2_{U_3}-\frac{2}{3}\, \alpha^3_{U_3}\big)-8\, \alpha^3_{U_3U_3}+12i\, \alpha^2_{U_3 U_3}, \qquad {\rm mod}\ \ \omega^z, \omega^{\oz}, \omega^j.
\endaligned
\]
Thus, by the normalization $V^1_{Z^3\oZ^2}=0$, one receives that:
\[
\alpha^2_{U_3 U_3}\equiv 0, \qquad {\rm and} \qquad \alpha^3_{U_3 U_3}\equiv 0, \qquad {\rm mod}\ \ \omega^z, \omega^{\oz}, \omega^j.
\]

We continue with the recurrence formula of $dV^2_{Z^2\oZ^3}$. Our computations simplify to:
\[
dV^2_{Z^3\oZ^2}=-\frac{2}{3} \,V^2_{Z^3\oZ^2}\, \alpha^3_{U_3}-4i\, {\rm Re}\, \mu_{U_2}, \qquad {\rm mod}\ \ \omega^z, \omega^{\oz}, \omega^j.
\]
Hence,  by normalizing ${\rm Im} V^2_{Z^3\oZ^2}=0$, one may also specify:
\[
{\rm Re}\, \mu_{U_2}\equiv 0, \qquad {\rm mod}\ \ \omega^z, \omega^{\oz}, \omega^j.
\]

A close inspection of the performed computations for the above two recurrence relations shows that

\begin{Lemma}
\label{normalization-alpha3-U3-l+2}
Let $j\geq 0$. It is always possible
\begin{itemize}
  \item[$1.$] to specify each two Maurer-Cartan forms $\alpha^2_{U_3^{j+2}}$ and $\alpha^3_{U_3^{j+2}}$ by normalizing to zero the lifted differential invariant $V^1_{Z^3\oZ^2 U_3^j}$.
  \item[$2.$] to specify each real Maurer-Cartan form ${\rm Re}\,\mu_{U_2^{j+1}}$, by normalizing to zero the lifted differential invariant ${\rm Im} V^2_{Z^3\oZ^2 U_2^j}$.
\end{itemize}
\end{Lemma}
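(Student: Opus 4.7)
The proof strategy follows that of Lemmas \ref{lem-observ-1}--\ref{lem-observ-4}: we identify explicit monomials in the relevant prolonged vector field coefficients whose lifts contribute nonzero constant coefficients to the target Maurer--Cartan forms in the recurrence relations of the chosen lifted differential invariants. In particular, the two recurrence relations displayed in the text (for $dV^1_{Z^3\oZ^2}$ and $dV^2_{Z^3\oZ^2}$) already settle the base case $j=0$ for both assertions, so the task is really to extend these patterns to arbitrary $j\geq 1$.

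For assertion $1$, the displayed recurrence for $dV^1_{Z^3\oZ^2}$ shows that $\alpha^2_{U_3^2}$ and $\alpha^3_{U_3^2}$ appear with the constants $12i$ and $-8$. Tracing them back through \eqref{prolong-formula} applied to $\phi^{1;z^3\oz^2}$, they arise from monomials quadratic in the order-three jets $v^3_{z^2\oz}$ and $v^3_{z\oz^2}$ multiplied by $\eta^k_{u_3 u_3}$ for $k=2,3$; on the cross-section those jets are frozen at the constant values $-i$ and $i$. The key observation is that $\phi^{1;z^3\oz^2 u_3^j}$ is obtained from $\phi^{1;z^3\oz^2}$ by $j$ successive applications of $D_{u_3}$, and the cross-section values of $v^3_{z^2\oz}, v^3_{z\oz^2}$ do not depend on $u_3$. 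Hence the monomials $(v^3_{z^2\oz})(v^3_{z\oz^2})\,\eta^k_{u_3^{j+2}}$ survive in $\phi^{1;z^3\oz^2 u_3^j}$ with the same constant coefficients $12i$ and $-8$. Their lifts are $\alpha^k_{U_3^{j+2}}$, so these Maurer--Cartan forms appear with nonzero constant coefficients in the recurrence of $V^1_{Z^3\oZ^2 U_3^j}$, permitting the normalization.

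For assertion $2$, the displayed recurrence for $dV^2_{Z^3\oZ^2}$ contains $-4i\,{\rm Re}\,\mu_{U_2}$ with a nonzero constant coefficient. Structurally, this term originates in $\phi^{2;z^3\oz^2}$ from the contributions $-(D_{x^j}\xi)v^2_{\ldots}-(D_{x^j}\bar\xi)v^2_{\ldots}$ of the prolongation formula, collapsed to a constant by the cross-section values $v^2_{z^2\oz}=v^2_{z\oz^2}=1$ and by the infinitesimal determining relations \eqref{eq: infinitesimal determining equations} tying the real part of $\mu_{U_2}$ to $\xi_{u_2}+\overline\xi_{u_2}$. Iterating $D_{u_2}$ through the prolongation formula $j$ further times produces the monomial $\xi_{u_2^{j+1}}+\overline\xi_{u_2^{j+1}}$ with the same constant coefficient, whose lift is $\mu_{U_2^{j+1}}+\overline\mu_{U_2^{j+1}} = 2\,{\rm Re}\,\mu_{U_2^{j+1}}$. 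The normalization ${\rm Im}\,V^2_{Z^3\oZ^2 U_2^j}=0$ therefore specifies the remaining real Maurer--Cartan form.

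The main obstacle is to rule out an accidental cancellation of these constant coefficients by the combinatorial profusion of new terms introduced at each application of $D_{u_3}$ or $D_{u_2}$. The structural reason this does not happen is that the only monomials producing lifts of $\alpha^k_{U_3^{j+2}}$ or ${\rm Re}\,\mu_{U_2^{j+1}}$ with purely numerical coefficients are those built on cross-section-normalized jets $v^3_{z^2\oz}, v^2_{z^2\oz}, v^1_{z\oz}$ that are independent of $u_2, u_3$; all competing contributions carry factors of lifted invariants that vanish on the cross-section, or live in lower-order Maurer--Cartan forms already specified in the previous lemmas. A fully rigorous verification at each $j$ nevertheless rests on the symbolic {\sc Maple} computations referenced by the author, precisely as in the preceding observational lemmas.
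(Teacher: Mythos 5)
Your proposal follows essentially the paper's own route: the paper offers no separate proof of this lemma beyond the remark that ``a close inspection of the performed computations for the above two recurrence relations'' yields it, i.e.\ the displayed recurrences for $dV^1_{Z^3\oZ^2}$ and $dV^2_{Z^3\oZ^2}$ settle $j=0$ and the general case is obtained by the same monomial-tracking in the prolonged coefficients used for Lemmas \ref{lem-observ-1}--\ref{lem-observ-4}. Your persistence argument under repeated $D_{u_3}$ and $D_{u_2}$ (frozen cross-section jets independent of $u_2,u_3$), together with the explicit deferral to the symbolic {\sc Maple} computations for the exact coefficients, matches the paper's level of justification, so the proposal is correct and takes essentially the same approach.
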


We notice that following the normalization ${\rm Im} \,V^2_{Z^3\oZ^2}=0$, the recurrence relation of this differential invariant changes into the form:

\begin{equation}
\label{dREV2Z2Zbar3}
d({\rm Re} \,V^2_{Z^3\oZ^2})=-\frac{2}{3}\, ({\rm Re}\, V^2_{Z^3\oZ^2}) \, \alpha^3_{U_3}, \qquad {\rm mod}\ \ \omega^z, \omega^{\oz}, \omega^j.
\end{equation}

Thanks to Lemma \ref{normalization-alpha3-U3-l+2}, the collection of yet unnormalized Maurer-Cartan forms \eqref{unnormalized-MC-prd-4} is now extensively reduced to only two Maurer-Cartan forms:

\begin{equation}
\label{unnormalized-MC-br2-ord5}
\alpha^2_{U_3} \qquad {\rm and} \qquad \alpha^3_{U_3}.
\end{equation}

At this stage, let us probe the recurrence relations corresponding to some of the lifted differential invariants appearing in \eqref{v-order-5}. Although these equations will not result in normalizing the remained Maurer-Cartan forms but they can reveal some helpful relations between lifted differential invariants. After a large amount of simplifications,  we receive that:

\begin{equation*}
\label{recu-rel-vanished-ord-5}
\aligned
0&=dV^3_{Z^4\oZ}=V^3_{Z^5\oZ}\, \omega^z+\big(V^3_{Z^4\oZ^2}+i\, V^2_{Z^4\oZ^2}\big)\, \omega^{\oz} +V^3_{Z^4\oZ U_j} \omega^j,
\\
0&=dV^3_{Z^3\oZ U_1}=V^3_{Z^4\oZ U_1} \omega^z+\big(V^3_{Z^3\oZ^2 U_1}+i\, V^2_{Z^3\oZ^2 U_1}-2i\, V^1_{Z^3\oZ U_1}\big)\, \omega^{\oz} +V^3_{Z^3\oZ U_1 U_j} \omega^j,
\\
0&=dV^3_{Z^3\oZ U_2}=V^3_{Z^4\oZ U_2} \omega^z+\big(V^3_{Z^3\oZ^2 U_2}+i \,V^2_{Z^3\oZ^2 U_2}\big)\, \omega^{\oz} +V^3_{Z^3\oZ U_2 U_j} \omega^j,
\\
0&=dV^3_{Z^3\oZ U_3}=V^3_{Z^4\oZ U_3} \omega^z+\big(V^3_{Z^3\oZ^2 U_3}+i\, V^2_{Z^3\oZ^2 U_3}\big)\, \omega^{\oz} +V^3_{Z^3\oZ U_3 U_j} \omega^j,
\\
0&=dV^1_{Z^4\oZ}= \frac{6}{5}\, V^1_{Z^5\oZ} \,\omega^z+\big(V^1_{Z^4\oZ^2}+\frac{1}{5} \,V^1_{Z^5\oZ}-\frac{1}{5} \,V^2_{Z^5\oZ^2}\big)\, \omega^{\oz}+V^1_{Z^4\oZ U_j} \omega^j-\frac{1}{5}\, V^3_{Z^5\oZ}\, \alpha^2_{U_3}.
\endaligned
\end{equation*}
Equating to zero the coefficients of the horizontal lifted 1-forms implies the following helpful relations:
\begin{equation}
\label{v-order-5-second-part}
\aligned
0= V^3_{Z^5\oZ}&=V^3_{Z^4\oZ U_j}=V^3_{Z^3\oZ U_k U_j}=V^1_{Z^5\oZ}=V^1_{Z^4\oZ U_j}
\\
V^3_{Z^4\oZ^2}&=-i V^2_{Z^4\oZ^2}, \qquad V^3_{Z^3\oZ^2 U_1}=2i V^1_{Z^3\oZ U_1}-i V^2_{Z^3\oZ^2U1},
\\
\ \ \ \ \ \ \ V^3_{Z^3\oZ^2 U_2}&=-i V^2_{Z^3\oZ^2 U_2}, \qquad V^3_{Z^3\oZ^2 U_3}=-i V^2_{Z^3\oZ^2 U_3}, \qquad V^2_{Z^5\oZ^2}=5 V^1_{Z^4\oZ^2}.
\endaligned
\end{equation}
 for $ j, k=1, 2, 3$.
Now, let us continue examining the yet unnormalized lifted invariants in this order. According to the observations introduced in the former orders, it remains to check the recurrence relations of the six lifted differential invariants:
\begin{equation}
\label{rec-rel-branch-3-order-5}
\aligned
dV^1_{Z^3\oZ U_1}&= \big(2i\, V^1_{Z^3\oZ U_1}-\frac{3}{2}\, V^3_{Z^2\oZ^2 U_2}\big)\, \alpha^2_{U_3}-\frac{4}{3}\, V^1_{Z^3\oZ U_1} \,\alpha^3_{U_3},
\\
dV^1_{Z^2\oZ^2 U_1}&=\big(4\, V^3_{Z^2\oZ^2 U_2}-\frac{8i}{3}\, V^1_{Z^3\oZ U_1}+\frac{8i}{3}\, V^1_{Z\oZ^3 U_1}\big)\, \alpha^2_{U_3}-\frac{4}{3}\, V^1_{Z^2\oZ^2U_1}\, \alpha^3_{U_3},
\\
dV^1_{Z^2\oZ^2 U_3}&= \big(4\, V^2_{Z^2 \oZ^3 U_2}+4 \,V^2_{Z^3\oZ^2 U_2}-4i\, V^2_{Z^3\oZ^2 U_3}+4i \,V^2_{Z^2\oZ^3 U_3}-4i\, V^3_{Z^3\oZ^2 U_2}+4i\, V^3_{Z^2\oZ^3 U_2}\big) \,\alpha^2_{U_3}
\\
&-\frac{5}{3}\, V^1_{Z^2\oZ^2 U_3}\, \alpha^3_{U_3},
\\
dV^2_{Z^2\oZ^2 U_1}&=V^3_{Z^2\oZ^2 U_1}\, \alpha^2_{U_3}-V^2_{Z^2\oZ^2U_1}\, \alpha^3_{U_3},
\\
dV^3_{Z^2\oZ^2 U_1}&=-V^2_{Z^2\oZ^2 U_1}\,\alpha^2_{U_3}-V^3_{Z^2\oZ^2 U_1}\,\alpha^3_{U_3},
\\
dV^3_{Z^2\oZ^2 U_2}&=-\frac{4}{3} \,V^3_{Z^2\oZ^2 U_2}\, \alpha^3_{U_3},
\endaligned
\end{equation}
presented modulo the lifted horizontal forms.
Together with \eqref{dREV2Z2Zbar3}, these expressions indicate the appearance of several new subbranches, relying upon the values of the involving lifted differential invariants.

\begin{Theorem}
\label{th-branch-3-order-5}
Let $M^5\subset\mathbb C^4$ be a five dimensional real-analytic totally nondegenerate CR manifold belonging to Branch $3$ which amounts to assume that it enjoys $V^3_{Z^3\oZ}=V^3_{Z^2\oZ U_1}=0$. Then, there exists some origin-preserving holomorphic transformation which brings $M^5$ into the normal form:
\begin{equation}
\label{normal-form-branch-3}
\aligned
v^1&=z\oz+ \frac{1}{6}\,V^1_{Z^3\oZ U_1} z^3\oz u_1+ \frac{1}{6}\,V^1_{Z\oZ^3 U_1} z\oz^3 u_1+\frac{1}{4}\,V^1_{Z^2\oZ^2 U_1} z^2\oz^2 u_1+\frac{1}{4}\,V^1_{Z^2\oZ^2 U_3} z^2\oz^2 u_3
\\
& \ \ \ \ \ \ \ \ \ +\sum_{j+k+\sharp\ell\geq 6} \frac{1}{j!\,k!\,\ell!}\,V^1_{Z^j\oZ^k U^\ell} z^j \oz^k u^\ell,
\\
v^2&=\frac{1}{2}\,(z^2\oz+z\oz^2)+\frac{1}{24}\,V^2_{Z^3\oZ^2} (z^3\oz^2+z^2\oz^3)+\frac{1}{4}\,V^2_{Z^2\oZ^2 U_1} z^2\oz^2 u_1+\sum_{j+k+\sharp\ell\geq 6} \frac{1}{j!\,k!\,\ell!}\,V^2_{Z^j\oZ^k U^\ell} z^j \oz^k u^\ell,
\\
v^3&=-\frac{i}{2}\,(z^2\oz-z\oz^2)+\frac{1}{24}\,V^3_{Z^3\oZ^2} (z^3\oz^2-z^2\oz^3)+\frac{1}{4}\,V^3_{Z^2\oZ^2 U_1} z^2\oz^2 u_1+\frac{1}{4}\,V^3_{Z^2\oZ^2 U_2} z^2\oz^2 u_2
\\
& \ \ \ \ \ \ \ \ +\frac{1}{2}\,V^3_{Z^2\oZ U_1 U_1} z^2\oz u_1 u_1+\frac{1}{2}\,V^3_{Z\oZ^2 U_1 U_1} z\oz^2 u_1 u_1
+\sum_{j+k+\sharp\ell\geq 6} \frac{1}{j!\,k!\,\ell!}\,V^2_{Z^j\oZ^k U^\ell} z^j \oz^k u^\ell,
\endaligned
\end{equation}
where in addition to the equations \eqref{v-order-5} and \eqref{v-order-5-second-part}, we have the cross-section normalizations \eqref{normalization-partial} together with:
\begin{equation}
\label{normalizations-branch-3}
\aligned
0=V^1_{Z^3\oZ^2 U_3^j}={\rm Im} V^2_{Z^3\oZ^2 U_2^j},
\endaligned
\end{equation}
for $j\geq 0$. Furthermore,
\begin{itemize}
\item[{\bf Branch 3-1.}] If at least one of the four lifted differential invariants:
\begin{equation}
\label{A-3-1}
V^2_{Z^2\oZ^2U^1}, \qquad V^3_{Z^2\oZ^2U^1}, \qquad  V^1_{Z^3\oZ U^1}, \qquad V^3_{Z^2\oZ^2 U^2},
\end{equation}
is nonzero then, by normalizing it to $1$, the corresponding recurrence relation brings $0\equiv\alpha^2_{U_3}=\alpha^3_{U_3}$, modulo $\omega^z, \omega^{\oz}, \omega^j, j=1, 2, 3$. In this case, the CR automorphism algebra $\frak{aut}_{CR}(M^5)$ is five dimensional and the biholomorphic equivalence problem to $M^5$ can be reduced to an
    absolute parallelism, namely $\{e\}$-structure, on itself with the structure equations of the lifted horizontal coframe, obtained by \eqref{eq: order zero structure equations} after applying the already mentioned normalizations and substituting the achieved expressions of the normalized Maurer-Cartan forms.
\item[{\bf Branch 3-2.}] Otherwise, if $V^2_{Z^2\oZ^2U_1}=V^3_{Z^2\oZ^2U_1}=V^1_{Z^3\oZ U_1}=V^3_{Z^2\oZ^2 U_2}= 0$ but at least one of the three real lifted differential invariants $V^1_{Z^2\oZ^2 U_1}, V^1_{Z^2\oZ^2 U_3}, {\rm Re} V^2_{Z^3\oZ^2}$  is nonzero then, by normalizing it to $1$, the real Maurer-Cartan form $\alpha^3_{U_3}$ can be specified.
\item[{\bf Branch 3-3.}] Otherwise, if all of the above mentioned seven differential invariants vanish identically, then none of the remained Maurer-Cartan forms $\alpha^2_{U_3}$ and $\alpha^3_{U_3}$ is normalizable in the current order five.
\end{itemize}
\end{Theorem}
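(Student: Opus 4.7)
The plan is to assemble the result by combining the order-by-order normalizations already completed in the preceding four orders with the fresh order-five normalizations specific to Branch~$3$, and then to perform a careful case analysis on the $2\times 2$ linear system governing the two remaining Maurer--Cartan forms $\alpha^2_{U_3}$ and $\alpha^3_{U_3}$. Concretely, the partial form \eqref{normalization-partial}--type shape of \eqref{normal-form-branch-3} is immediate from the universal partial normal form together with the Branch~$3$ assumption $V^3_{Z^3\oZ}=V^3_{Z^2\oZ U_1}=0$, so the first task is only to justify the further vanishing/identification relations listed in \eqref{v-order-5} and \eqref{v-order-5-second-part}. These follow by writing out the recurrence relations \eqref{VZ3Zbar} together with the recurrence relations for $V^3_{Z^4\oZ}$, $V^3_{Z^3\oZ U_j}$, $V^1_{Z^4\oZ}$, and then equating to zero the coefficients of each horizontal form $\omega^z,\omega^{\oz},\omega^j$; in Branch~$3$ the left-hand sides vanish automatically, so the coefficient relations give immediately the prescribed identities among fifth-order jets.

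Next I would invoke Lemma~\ref{normalization-alpha3-U3-l+2} (whose derivation I would sketch from the two model recurrence relations for $dV^1_{Z^3\oZ^2}$ and $dV^2_{Z^3\oZ^2}$) to justify the cross-section \eqref{normalizations-branch-3} and to eliminate all higher-order Maurer--Cartan forms $\alpha^2_{U_3^{j+2}},\alpha^3_{U_3^{j+2}}$ and $\mathrm{Re}\,\mu_{U_2^{j+1}}$. After these normalizations, the only Maurer--Cartan forms still undetermined are the two real 1-forms $\alpha^2_{U_3}$ and $\alpha^3_{U_3}$ listed in \eqref{unnormalized-MC-br2-ord5}. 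The whole remaining argument then reduces to a case analysis of the $2\times 2$ homogeneous linear system in $\alpha^2_{U_3},\alpha^3_{U_3}$ whose coefficient matrix is read off from the recurrence relations \eqref{rec-rel-branch-3-order-5} together with \eqref{dREV2Z2Zbar3}, once the lifted invariants being normalized are set equal to (nonzero) constants.

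The three subbranches then arise purely from the rank of this coefficient matrix. For Branch~3-1: if at least one of the four invariants in \eqref{A-3-1} is nonzero, the relevant one of the first four equations in \eqref{rec-rel-branch-3-order-5} yields, after normalizing it to $1$, a linear relation in which the coefficients of $\alpha^2_{U_3}$ and $\alpha^3_{U_3}$ are linearly independent constants (complex with nonvanishing imaginary part, or a pair of independent real vectors after splitting $V^2_{Z^2\oZ^2U_1},V^3_{Z^2\oZ^2U_1}$ in the standard $\mathbb R$-valued form); solving this linear system modulo the horizontal coframe forces $\alpha^2_{U_3}\equiv\alpha^3_{U_3}\equiv 0$ and completes the moving frame in order five. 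For Branch~3-2: if those four vanish but one of the three \emph{real} invariants $V^1_{Z^2\oZ^2U_1},V^1_{Z^2\oZ^2U_3},\mathrm{Re}\,V^2_{Z^3\oZ^2}$ is nonzero, then inspecting the coefficients of $\alpha^2_{U_3}$ in the corresponding three recurrence relations (and in \eqref{dREV2Z2Zbar3}) shows these coefficients all vanish under the Branch~3-2 assumptions, leaving a single equation involving only $\alpha^3_{U_3}$; thus only $\alpha^3_{U_3}$ can be specified. For Branch~3-3: if in addition all seven invariants vanish, both recurrence systems become trivial in their dependence on $\alpha^2_{U_3}$ and $\alpha^3_{U_3}$, so neither form is normalizable at this order.

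The main obstacle is verifying that the coefficients of $\alpha^2_{U_3}$ in \eqref{rec-rel-branch-3-order-5} really do vanish identically once the Branch~3-2 hypotheses are imposed --- this requires propagating the vanishing of $V^2_{Z^2\oZ^2U_1},V^3_{Z^2\oZ^2U_1},V^1_{Z^3\oZ U_1},V^3_{Z^2\oZ^2U_2}$ into the $\alpha^2_{U_3}$-coefficients of the three remaining recurrence equations (including, via \eqref{v-order-5-second-part}, the dependent relations among $V^3_{Z^3\oZ^2U_j}$ and $V^2_{Z^3\oZ^2U_j}$), and is the point at which the book-keeping becomes heavy enough to warrant the \textsc{Maple} computation referenced in \cite{Maple}. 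The dimension statement and the $\{e\}$-structure conclusion in Branch~3-1 then follow in the standard way from the general theory of Section~\ref{sec-moving-frame}: once a complete cross-section exists at a finite order, the lifted horizontal coframe pushes down to an absolute parallelism on $M^5$ whose structure equations \eqref{eq: order zero structure equations} (after substituting the normalized Maurer--Cartan forms) encode the CR-equivalence problem and force $\dim\mathfrak{aut}_{CR}(M^5)=5$.
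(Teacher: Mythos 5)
Your overall strategy is the same as the paper's: obtain \eqref{v-order-5} and \eqref{v-order-5-second-part} by equating to zero the horizontal coefficients in the recurrence relations of the invariants that vanish in Branch $3$, install the extra cross-section \eqref{normalizations-branch-3} via Lemma \ref{normalization-alpha3-U3-l+2}, and then decide the fate of the two surviving forms $\alpha^2_{U_3},\alpha^3_{U_3}$ from \eqref{dREV2Z2Zbar3} and \eqref{rec-rel-branch-3-order-5}. However, your case analysis has a concrete gap in Branch 3-1. You claim that normalizing the single nonzero invariant of \eqref{A-3-1} to $1$ yields \emph{one} relation whose $\alpha^2_{U_3}$- and $\alpha^3_{U_3}$-coefficients are independent, forcing both forms to vanish. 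This works only for the complex invariant $V^1_{Z^3\oZ U_1}$ (the imaginary part of the first relation kills $\alpha^2_{U_3}$, the real part then kills $\alpha^3_{U_3}$). For the real invariants a single relation cannot suffice: $dV^3_{Z^2\oZ^2 U_2}=-\tfrac43 V^3_{Z^2\oZ^2U_2}\,\alpha^3_{U_3}$ contains no $\alpha^2_{U_3}$ at all, and $dV^2_{Z^2\oZ^2U_1}=V^3_{Z^2\oZ^2U_1}\alpha^2_{U_3}-V^2_{Z^2\oZ^2U_1}\alpha^3_{U_3}$ is one real equation in two real forms. One must combine two relations, as the paper does: the fourth and fifth equations act on the real pair $(V^2_{Z^2\oZ^2U_1},V^3_{Z^2\oZ^2U_1})$ by an infinitesimal rotation and scaling, so normalizing that pair determines both forms; and in the subcase where only $V^3_{Z^2\oZ^2U_2}\neq0$, the last equation gives $\alpha^3_{U_3}\equiv0$ while the first equation, whose left-hand side vanishes because $V^1_{Z^3\oZ U_1}\equiv0$, reduces to $0\equiv-\tfrac32 V^3_{Z^2\oZ^2U_2}\,\alpha^2_{U_3}$ and gives $\alpha^2_{U_3}\equiv0$. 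Your proposal does not supply this second relation.

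In Branch 3-2 you declare as the ``main obstacle'' the verification that the $\alpha^2_{U_3}$-coefficients of the remaining relations vanish identically under the branch hypotheses. That step is both unnecessary and unlikely to succeed: the $\alpha^2_{U_3}$-coefficient of $dV^1_{Z^2\oZ^2U_3}$ involves order-six invariants such as $V^2_{Z^3\oZ^2U_2}$, $V^2_{Z^3\oZ^2U_3}$, $V^3_{Z^3\oZ^2U_2}$ (after using \eqref{v-order-5-second-part} it reduces to a multiple of ${\rm Im}\,V^2_{Z^3\oZ^2U_3}$), none of which is forced to vanish by the Branch 3-2 assumptions or by the cross-section \eqref{normalizations-branch-3}. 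The assertion of the theorem is only that $\alpha^3_{U_3}$ can be \emph{specified}, and this follows directly: after normalizing the nonzero real invariant $V^1_{Z^2\oZ^2U_1}$, $V^1_{Z^2\oZ^2U_3}$ or ${\rm Re}\,V^2_{Z^3\oZ^2}$ to $1$, the coefficient of $\alpha^3_{U_3}$ in the corresponding relation is the nonzero constant $-\tfrac43$, $-\tfrac53$ or $-\tfrac23$, so the relation can be solved for $\alpha^3_{U_3}$ in terms of $\alpha^2_{U_3}$, the lifted invariants and the horizontal coframe; no vanishing of $\alpha^2_{U_3}$-coefficients is required. With these two repairs the rest of your argument (derivation of \eqref{v-order-5} and \eqref{v-order-5-second-part}, the cross-section \eqref{normalizations-branch-3}, Branch 3-3, and the dimension and $\{e\}$-structure conclusions in Branch 3-1) coincides with the paper's proof.
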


\begin{proof}
If at least one of the invariants $V^2_{Z^2\oZ^2U^1}$, $V^3_{Z^2\oZ^2U_1}$, $V^1_{Z^3\oZ U_1}$ is nonzero then, by normalizing it to $1$, the corresponding first, fourth or five equation in \eqref{rec-rel-branch-3-order-5} yields $0\equiv\alpha^2_{U_3}=\alpha^3_{U_3}$, modulo the lifted horizontal coframe (notice that $V^3_{Z^2\oZ^2U_2}$ is real). Otherwise, if these invariants vanish but $V^3_{Z^2\oZ^2 U_1}\neq 0$, then by normalizing it to $1$, the first and last equations in \eqref{rec-rel-branch-3-order-5} imply the same result. In both of these cases, all the appearing Maurer-Cartan forms are normalized and hence the Lie algebra $\frak{aut}_{CR}(M^5)$ has the same dimension as the CR manifold $M^5$.

\noindent
Otherwise, if the above four mentioned differential invariants vanish, identically, but at least one of the real differential invariants ${\rm Re} V^2_{Z^3\oZ^2}, V^1_{Z^2\oZ^2U_1}, V^1_{Z^2\oZ^2U_3}$ is nonzero, then its corresponding recurrence relation in \eqref{dREV2Z2Zbar3} or \eqref{rec-rel-branch-3-order-5} results in specifying the Maurer-Cartan form $\alpha^3_{U_3}$.

\noindent
If all of these seven differential invariants vanish, then the recurrence relations \eqref{dREV2Z2Zbar3} and  \eqref{rec-rel-branch-3-order-5} are of no use to normalize any Maurer-Cartan form.
\end{proof}

Representing the achieved normal form \eqref{normal-form-branch-2} as \eqref{NM-CM} with the corresponding cross-section \eqref{normalization-partial}-\eqref{normalizations-branch-3}, the general schematic diagram of the normal forms in Branch 3 can be displayed as follows\,\,---\,\,disregarding the normalizations which may appear in the mentioned subbranches:
\begin{figure}[h]
   \centering
\parbox{1.4in}{%
\includegraphics[height=4cm, width=4 cm]{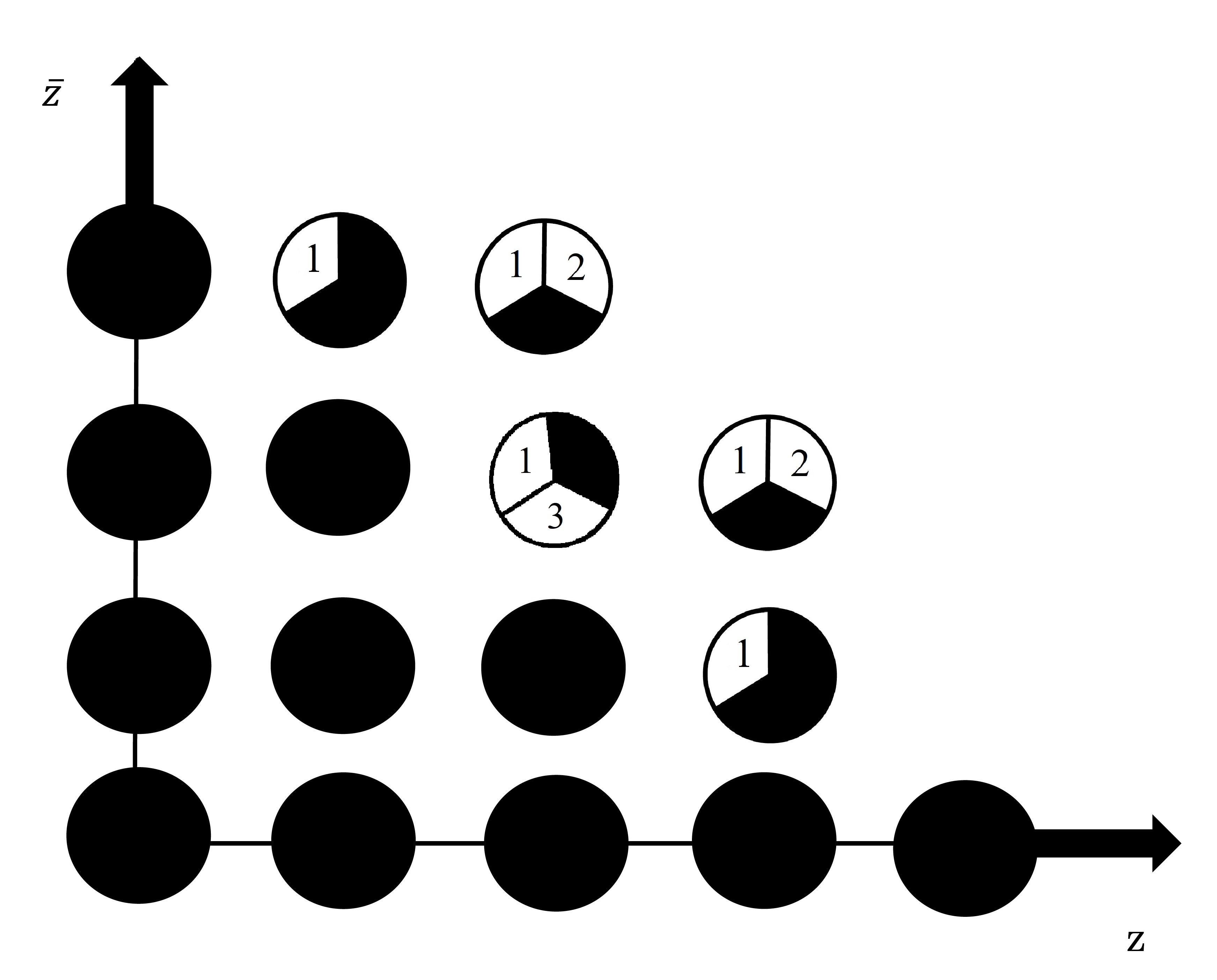}
\caption{$\Phi^1$}}
\qquad\qquad
\parbox{1.4in}{%
\includegraphics[height=4cm, width=4 cm]{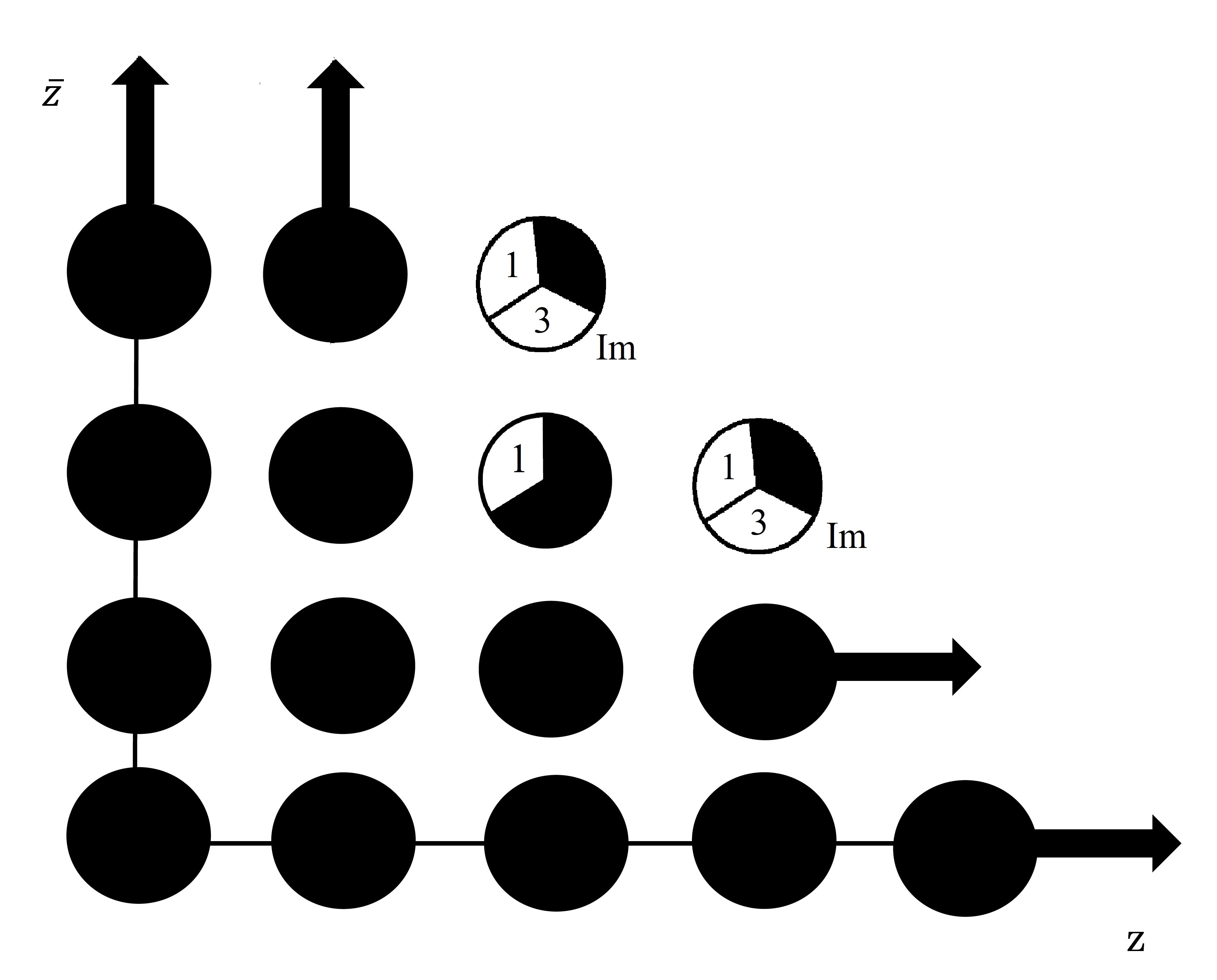}
\caption{ِ$\Phi^2$}}
\qquad\qquad
\begin{minipage}{1.4in}%
\includegraphics[height=4cm, width=4 cm]{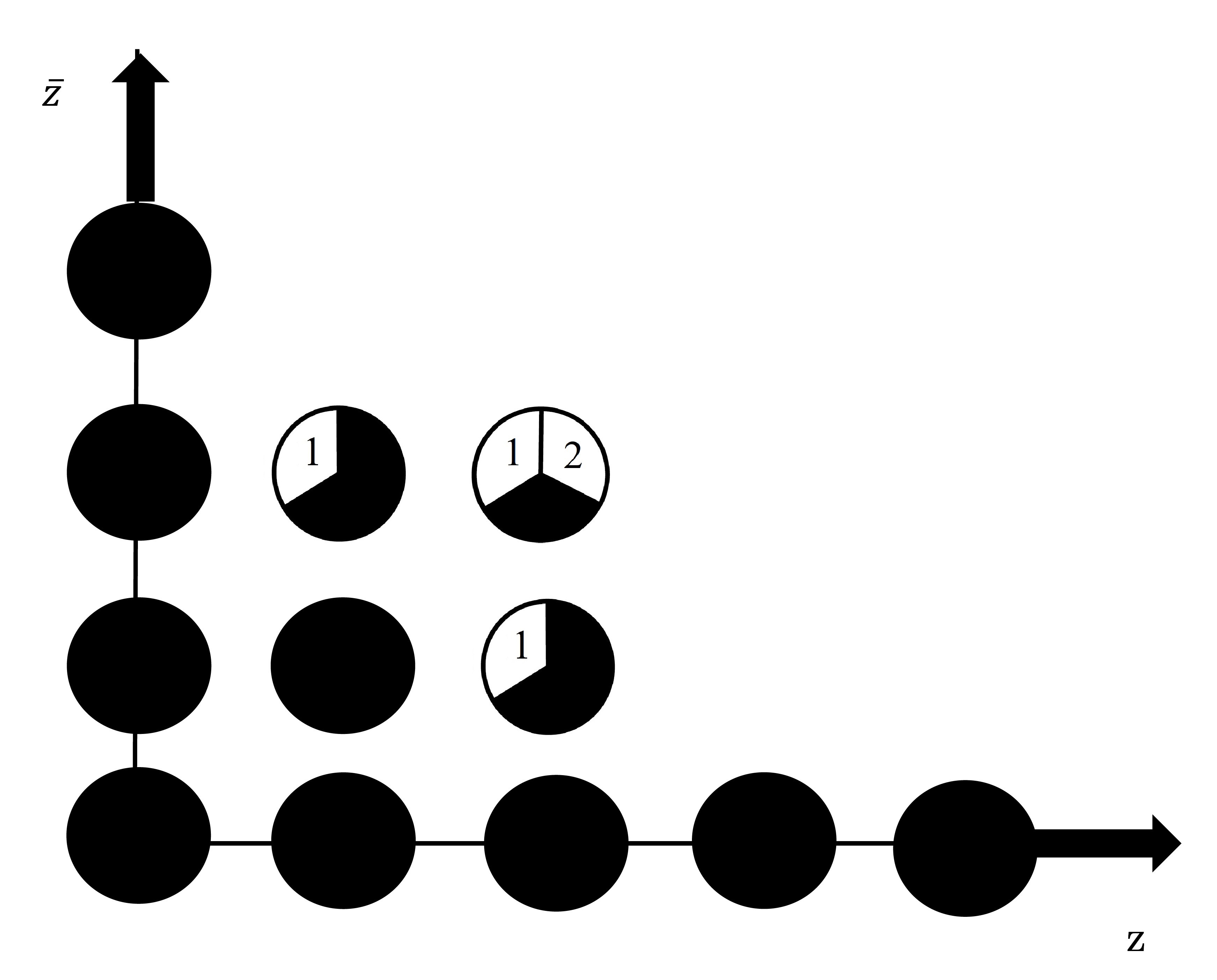}
\caption{$\Phi^3$}
\end{minipage}
\end{figure}

We notice that the normal form of Branch 3-1 is complete since in this subbranch, all the appeared Maurer-Cartan forms are normalized. It also follows, from the possibility of constructing a moving frame in this branch, that the transformation which brings each CR manifold $M^5$ of this branch to its normal form is {\it unique}.
For Branch 3-2, the complete normal form depends on the possibility of normalizing the last remained Maurer-Cartan form $\alpha^2_{U_3}$ in the next orders, however, at least we know that the infinitesimal CR automorphism algebras in this branch are either of dimension $5$ or $6$. Similarly, for a complete normal form in Branch 3-3 we need to proceed to the next orders. Since this last branch includes the model cubic $M^5_{\tt c}$ ({\it cf.} \eqref{model-cubic}), let us skip Branch 3-2 and continue with it.

\subsection{Order 6 - Branch 3-3}

Although the order five recurrence relations \eqref{dREV2Z2Zbar3} and \eqref{rec-rel-branch-3-order-5} are of no help  to normalize Maurer-Cartan forms in this branch but, however, equating to zero the coefficients of the independent lifted horizontal $1$-forms brings us some helpful relations. Because of their length, we did not present these coefficients in \eqref{rec-rel-branch-3-order-5} but they are available in the {\sc Maple} worksheet \cite{Maple}. Taking into account the expressions \eqref{v-order-5} and \eqref{v-order-5-second-part} and also the normalizations \eqref{normalizations-branch-3}, the arisen system results in the solution:
\begin{equation}
\label{eq-1}
\aligned
0&\equiv V^1_{Z^4\oZ^2}= V^1_{Z^2\oZ^2U_1U_1} =V^1_{Z^2\oZ^2 U_1U_2}=V^1_{Z^2\oZ^2U_1U_3}
\\
&=V^1_{Z^3\oZ U_1U_1}=V^1_{Z^3\oZ U_1U_2}=V^1_{Z^3\oZ U_1U_3}=V^1_{Z^3\oZ^2U_1}= V^1_{Z^3\oZ^2 U_2}
\\
&=V^2_{Z^2\oZ^2U_1U_1}=V^2_{Z^2\oZ^2U_1U_2}=V^2_{Z^2\oZ^2U_1U_3}=V^2_{Z^3\oZ^2U_1}=V^2_{Z^3\oZ^2U_2}=V^2_{Z^3\oZ^2U_3}
\\
&=V^3_{Z^2\oZ^2U_1U_1}=V^3_{Z^2\oZ^2U_1U_2}=V^3_{Z^2\oZ^2U_1U_3}=V^3_{Z^2\oZ^2U_2U_2}=V^3_{Z^2\oZ^2U_2U_3}
\\
&=V^3_{Z^2\oZ U_1U_1U_1}=V^3_{Z^2\oZ U_1U_1U_2}= V^3_{Z^2\oZ U_1U_1U_3}=V^3_{Z^2\oZ U_1U_2U_2}=V^3_{Z^2\oZ U_1U_2U_3}=V^3_{Z^2\oZ U_1U_3U_3}.
\endaligned
\end{equation}

Furthermore, the recurrence relation of order five differential invariant $V^3_{Z^3\oZ^2}$, vanished in this subbranch, can be simplified extensively by means of the above equations, \eqref{v-order-5} and \eqref{v-order-5-second-part} as:
\[
\aligned
0=dV^3_{Z^3\oZ^2}&=-\frac{i}{2}V^2_{Z^3\oZ^3} \omega^z+\big(V^3_{Z^3 \oZ^3}+\frac{i}{2} V^2_{Z^3\oZ^3}\big) \omega^{\oz}.
\endaligned
\]

Equating to zero the coefficients of $\omega^z$ and $\omega^{\oz}$ of this equation and also inserting \eqref{eq-1} in \eqref{v-order-5} and \eqref{v-order-5-second-part}, give in addition:
\begin{equation}
\label{eq-2}
\aligned
0&\equiv V^1_{Z^5\oZ}=V^1_{Z^4\oZ U_j}=V^2_{Z^4\oZ^2}= V^2_{Z^3\oZ^3}
\\
&=V^3_{Z^4\oZ^2}=V^3_{Z^3\oZ^3}=V^3_{Z^3\oZ^2 U_j}=V^3_{Z^5\oZ}&=V^3_{Z^4\oZ U_j}=V^3_{Z^3\oZ U_j U_k}, \qquad j, k=1,2,3.
\endaligned
\end{equation}

Accordingly and taking into accounts also the formerly provided normalizations \eqref{normalization-partial}-\eqref{normalizations-branch-3}, we shall consider in this order just the following 3 out of 130 possible recurrence relations:

\begin{equation}
\label{rec-rel-ord-6}
\aligned
dV^1_{Z^3\oZ^3}&=-\frac{4}{3}\,V^1_{Z^3\oZ^3} \,\alpha^3_{U_3},
\\
dV^1_{Z^2\oZ^2 U_2U_3}&=-\frac{1}{3}\,V^1_{Z^2\oZ^2 U_3U_3} \alpha^2_{U_3}-\frac{8}{9}\,V^1_{Z^2\oZ^2 U_2U_3} \alpha^3_{U_3}, \qquad\qquad {\rm mod} \ \omega^z, \omega^{\oz}, \omega^j.
\\
dV^1_{Z^2\oZ^2 U_3U_3}&=-\frac{2}{9}\,V^1_{Z^2\oZ^2 U_2U_3} \alpha^2_{U_3}-\frac{8}{3}\,V^1_{Z^2\oZ^2 U_3U_3} \alpha^3_{U_3}.
\endaligned
\end{equation}

Following these equations, we find ourselves forced to deal again with some new subbranches, depending upon vanishing$/$non-vanishing of the three lifted relative invariants $V^1_{Z^3\oZ^3}, V^1_{Z^2\oZ^2 U_2 U_3}$ and  $V^1_{Z^2\oZ^2 U_3 U_3}$. Indeed, we may imagine the following three subbranches at the heart of Branch 3-3:

\begin{itemize}
  \item[$\Box$] {\bf Branch 3-3-1:} $V^1_{Z^3\oZ^3}\neq 0$ and $V^1_{Z^2\oZ^2 U_2 U_3}=V^1_{Z^2\oZ^2 U_3 U_3}= 0$,
  \item[$\Box$] {\bf Branch 3-3-2:} $V^1_{Z^3\oZ^3}= 0$ and either $V^1_{Z^2\oZ^2 U_2 U_3}\neq 0$ or $V^1_{Z^2\oZ^2 U_3 U_3}\neq 0$.
  \item[$\Box$] {\bf Branch 3-3-3:} $V^1_{Z^3\oZ^3}=V^1_{Z^2\oZ^2 U_2 U_3}=V^1_{Z^2\oZ^2 U_3 U_3} =0$.
\end{itemize}

\subsubsection{{\bf Branch 3-3-1}}

In this case and by normalizing $V^1_{Z^3\oZ^3}=1$, the first recurrence relation in \eqref{rec-rel-ord-6} plainly specifies the Maurer-Cartan form $\alpha^3_{U_3}$. Our computations show that, after simplification, we have:
\[
\alpha^3_{U_3}=\frac{3}{4}\,V^1_{Z^4\oZ^3}\,\omega^z+\frac{3}{4}\,V^1_{Z^3\oZ^4}\,\omega^{\oz}+\frac{3}{4}\,V^1_{Z^3\oZ^3 U_j}\,\omega^j.
\]
Then, only one Maurer-Cartan form, namely $\alpha^2_{U_3}$, remains yet unnormalized here. In order to realize the possibility of its normalization, we have to proceed into order seven. Although similar to \eqref{eq-1}, we can find the value of many order seven lifted differential invariants by inspecting those appearing in \eqref{eq-1} and \eqref{eq-2} but, unfortunately, it needs to perform enormous computations for both finding the mentioned recurrence relations and solving the arisen system. In order to bypass such complication, we benefit Cartan's results in his classical approach to equivalence problems \cite{Olver-1995}. Our computations show that in this branch, we have:

\begin{equation}\label{MC-forms}
\aligned
\mu_Z=-i\,\alpha^2_{U_3}+\frac{1}{3}\,\alpha^3_{U_3}, & \qquad \mu_{U_j}=0, \ \ j=1, 2, 3,
\\
\alpha^1_{Z}=-i\,\omega^{\oz}, \qquad \alpha^2_Z=\alpha^3_Z=0, & \qquad
\alpha^1_{U_1}=\frac{2}{3} \, \alpha^3_{U_3},  \qquad \alpha^1_{U_2}=\alpha^1_{U_3}=0,
\\
\alpha^2_{U_1}=-(\omega^z+\omega^{\oz}), \qquad \alpha^2_{U_2}=\alpha^3_{U_3}, & \qquad \alpha^3_{U_1}=i\,(\omega^z-\omega^{\oz}),  \qquad \alpha^3_{U_2}=-\alpha^2_{U_3},
\endaligned
\end{equation}
with $\alpha^3_{U_3}$ as determined above. Inserting these expressions into the structure equations \eqref{eq: order zero structure equations} of the lifted horizontal coframe gives:

\begin{equation}
\label{struc-eq}
\aligned
d\omega^z&=-i\,\alpha^2_{U_3}\wedge\omega^z-\frac{1}{4}\,V^1_{Z^3\oZ^4}\,\omega^z\wedge\omega^{\oz}-\frac{1}{4}\,V^1_{Z^3\oZ^3 U_j}\,\omega^z\wedge\omega^j,
\\
d\omega^{\oz}&=\overline{d\omega^{z}},
\\
d\omega^1&=2i\,\omega^z\wedge\omega^{\oz}+\frac{1}{2}\,V^1_{Z^4\oZ^3}\,\omega^z\wedge\omega^{\oz}-\frac{1}{2}\,V^1_{Z^3\oZ^3 U_j}\,\omega^{\oz}\wedge\omega^j,
\\
d\omega^2&=\alpha^2_{U_3}\wedge\omega^3-\omega^z\wedge\omega^1-\omega^{\oz}\wedge\omega^1+\frac{3}{4}\,\big(V^1_{Z^4\oZ^3}\,\omega^z+V^1_{Z^3\oZ^4}\,\omega^{\oz}+V^1_{Z^3\oZ^3 U_j}\,\omega^j\big)\wedge\omega^2,
\\
d\omega^3&=-\alpha^2_{U_3}\wedge\omega^2+i\,\omega^z\wedge\omega^1-i\,\omega^{\oz}\wedge\omega^1+\frac{3}{4}\,\big(V^1_{Z^4\oZ^3}\,\omega^z+V^1_{Z^3\oZ^4}\,\omega^{\oz}+V^1_{Z^3\oZ^3 U_j}\,\omega^j\big)\wedge\omega^3.
\endaligned
\end{equation}
Thus, by the principles of Cartan's theory, the possibility of normalizing the last Maurer-Cartan form $\alpha^2_{U_3}$ relies upon the expressions of the four order seven lifted differential invariants $V^1_{Z^4\oZ^3}, V^1_{Z^3\oZ^3 U_j}$, $j=1, 2, 3$. Our much complicated computations show that, after normalizations, two lifted differential invariants $V^1_{Z^4\oZ^3}$ and $V^1_{Z^3\oZ^3U_1}$ are independent of the remained group parameters. Indeed we have:
\[
 dV^1_{Z^4\oZ^3}\equiv 0 \ \ \ {\rm and} \ \ \ dV^1_{Z^3\oZ^3 U_1}\equiv 0, \qquad {\rm mod} \ \omega^z, \omega^{\oz}, \omega^j.
\]
Hence, these two lifted differential invariants are essentially of no use to normalize the Maurer-Cartan form $\alpha^2_{U_3}$. Moreover, checking the recurrence relations of $V^1_{Z^3\oZ^2 U_2}$ and $V^1_{Z^3 \oZ^2 U_3}$ shows that:
\[
V^1_{Z^3\oZ^3 U_2}=V^1_{Z^3\oZ^3 U_3}.
\]
Accordingly, $V^1_{Z^3\oZ^3 U_2}$ is the only lifted differential invariant which may help one to normalized the remained Maurer-Cartan form in \eqref{struc-eq}. After a large amount of simplifications, we found that:

\[
dV^1_{Z^3\oZ^3 U_2}=V^1_{Z^3\oZ^3 U_2}\,\alpha^2_{U_3} \qquad {\rm mod} \ \omega^z, \omega^{\oz} , \omega^j.
\]
Consequently, the normalization of $\alpha^2_{U_3}$ depends upon vanishing$/$nonvanishing of the order seven lifted differential invariant $V^1_{Z^3\oZ^3 U_2}$. Thus we may imagine the following two subbranches of Branch 3-3-1:

\begin{itemize}
  \item[$\Box$] {\bf Branch 3-3-1-a:} $V^1_{Z^3\oZ^3U_2}\neq 0$,
  \item[$\Box$] {\bf Branch 3-3-1-b:} $V^1_{Z^3\oZ^3U_2}= 0$.
\end{itemize}

 In the first branch, the normalization $V^1_{Z^3\oZ^3 U_2}=1$ plainly specifies the Maurer-Cartan form $\alpha^2_{U_3}$. In this case, the  construction of a complete equivariant moving frame and hence a unique normal form is successfully finalized and the corresponding Lie algebras of infinitesimal CR automorphisms are $5$-dimensional. Moreover, according to the structure equations \eqref{struc-eq}, the equivalence problem to CR manifolds belonging to this branch is determined by the two lifted differential invariants $V^1_{Z^4\oZ^3}$ and $V^1_{Z^3\oZ^3 U_1}$.

 But in Branch 3-3-1-b, the above recurrence relation $dV^1_{Z^3\oZ^3 U_2}$ is of no use to specify $\alpha^2_{U_3}$. More precisely, in this case the structure equations \eqref{struc-eq} show that this remained Maurer-Cartan form is {\it never normalizable}. One can verify from these structure equations that the horizontal lifted coframe $\{\omega^z, \omega^{\oz}, \omega^1, \omega^2, \omega^3\}$ is {\it non-involutive}\footnote{Indeed, the reason is that the {\it degree of indeterminancy} of the system \eqref{struc-eq} is zero ({\it cf.} \cite[Definition 11.2]{Olver-1995}).} ({\it see} \cite[$\S$11]{Olver-1995} for definition).  Hence by \cite[Proposition 12.1]{Olver-1995}, the solution of the biholomorphic equivalence problem to $5$-dimensional CR manifolds $M^5$, belonging to this subbranch, completely relies upon the solution of the equivalence problem between $6$-dimensional {\it prolonged spaces} $M^{\sf pr}:=M^5\times \mathcal G^{\sf red}$ equipped with the coframe $\{\omega^z, \omega^{\oz}, \omega^1, \omega^2, \omega^3, \alpha^2_{U_3}\}$. Here, $\mathcal G^{\sf red}$ is the $1$-dimensional subgroup obtained by the normalizations applied to the original pseudo-group $\mathcal G$. According to the principles of Cartan's classical method, we only need to find the structure equations of the single added $1$-forms $\alpha^2_{U_3}$. By the help of the formula \eqref{struc-eq-MC}, we have:
\[
\aligned
d\alpha^2_{U_3}&=\omega^z\wedge\alpha^2_{ZU_3}+\omega^{\oz}\wedge\alpha^2_{\oZ U_3}+\omega^j\wedge\alpha^2_{U_jU_3}
+\alpha^2_{Z}\wedge\mu_{U_3}+\alpha^2_{\oZ}\wedge\overline\mu_{U_3}+\alpha^2_{U_j}\wedge\alpha^j_{U_3}+\alpha^2_{V^j}\wedge\gamma^j_{U_3}.
\endaligned
\]
Our computations show that in this branch:
\[
\aligned
0=\alpha^2_{ZU_3}&=\alpha^2_{U_1U_3}=\alpha^2_{U_3U_3}=\mu_{U_3}=\alpha^1_{U_3}=\alpha^2_{V^j}, \qquad \alpha^2_{U_2U_3}=\frac{1}{16}\,\big(\omega^z+\omega^{\oz}\big),
\\
\alpha^2_{U_2}&=\alpha^3_{U_3}=\frac{3}{4}\,V^1_{Z^4\oZ^3}\,\omega^z+\frac{3}{4}\,V^1_{Z^3\oZ^4}\,\omega^{\oz}+\frac{3}{4}\,V^1_{Z^3\oZ^3 U_1}\,\omega^1.
\endaligned
\]
 Thus, we have:

 \begin{equation}
 \label{dalpha2-U3}
  d\alpha^2_{U_3}=-\frac{1}{16}\,\big(\omega^z+\omega^{\oz}\big)\wedge\omega^2.
 \end{equation}

 This structure equation together with \eqref{struc-eq}\,\,---\,\,modified  by the branch assumptions $0=V^1_{Z^3\oZ^3U_2}=V^1_{Z^3\oZ^3U_3}$\,\,---\,\,determine the biholomorphic equivalence problem between CR manifolds $M^5$ of this subbranch. They also indicate that the Lie algebras of infinitesimal CR automorphisms corresponding to these manifolds are all of dimension six. Summing up the results, we have

\begin{Theorem}
\label{th-branch-3-3-1}
Let $M^5\subset\mathbb C^4$ be a five dimensional real-analytic totally nondegenerate CR manifold of Branch $3$-$3$-$1$, namely with the assumptions:
\begin{equation}
\label{A-3-3-1}
\aligned
0&=V^3_{Z^3\oZ}=V^3_{Z^2\oZ U_1},
\\
0&= V^2_{Z^2\oZ^2U_1}=V^3_{Z^2\oZ^2U_1}=V^1_{Z^3\oZ U_1}=V^3_{Z^2\oZ^2 U_2}=V^1_{Z^2\oZ^2 U_1}= V^1_{Z^2\oZ^2 U_3}={\rm Re}\, V^2_{Z^3\oZ^2},
\\
0&\neq V^1_{Z^3\oZ^3} \ \ \ and \ \ \ V^1_{Z^2\oZ^2 U_2 U_3}=V^1_{Z^2\oZ^2 U_3 U_3}=0.
\endaligned
\end{equation}
 Then, there exists some origin-preserving holomorphic transformation which brings $M^5$ into the normal form:
\begin{equation}
\label{normal-form-Branch-3-3-1}
\aligned
v^1&=z\oz+ \frac{1}{36}\,z^3\oz^3+\sum_{j+k+\sharp\ell\geq 7} \frac{1}{j!\,k!\,\ell!}\,V^1_{Z^j\oZ^k U^\ell} z^j \oz^k u^\ell,
\\
v^2&=\frac{1}{2}\,(z^2\oz+z\oz^2)+\sum_{j+k+\sharp\ell\geq 7} \frac{1}{j!\,k!\,\ell!}\,V^2_{Z^j\oZ^k U^\ell} z^j \oz^k u^\ell,
\\
v^3&=-\frac{i}{2}\,(z^2\oz-z\oz^2)+\sum_{j+k+\sharp\ell\geq 7} \frac{1}{j!\,k!\,\ell!}\,V^3_{Z^j\oZ^k U^\ell} z^j \oz^k u^\ell,
\endaligned
\end{equation}
with the cross-section normalizations \eqref{normalization-partial}-\eqref{normalizations-branch-3}. Moreover,
\begin{itemize}
  \item[{\bf Branch 3-3-1-a.}] If in addition we have $V^1_{Z^3\oZ^3 U_2}\neq 0$ then, by adding the normalization $V^1_{Z^3\oZ^3 U_2}=1$ to the already mentioned cross-section, the lastly remained Maurer-Cartan form $\alpha^2_{U_3}$ will be normalized and one constructs a complete equivariant moving frame. In this case, the already mentioned origin-preserving transformation is unique and the Lie algebra $\frak{aut}_{CR}(M^5)$ is $5$-dimensional. Furthermore, the biholomorphic equivalence problem to $M^5$ is reducible to an absolute parallelism, namely $\{e\}$-structure, on itself with the structure equations \eqref{struc-eq}, modified by $V^1_{Z^3\oZ^3 U_2}=V^1_{Z^3\oZ^3 U_3}=1$.
  \item[{\bf Branch 3-3-1-b.}] Otherwise, if $V^1_{Z^3\oZ^3 U_2}=0$ then, the  remained Maurer-Cartan form $\alpha^2_{U_3}$ is never normalizable. In this case, the Lie algebra $\frak{aut}_{CR}(M^5)$ is $6$-dimensional and the biholomorphic equivalence problem to $M^5$ is reducible to an absolute parallelism, namely $\{e\}$-structure, on the $6$-dimensional prolonged space $M^5\times\mathcal G^{\sf red}$ with the structure equations \eqref{struc-eq}-\eqref{dalpha2-U3}, modified by $V^1_{Z^3\oZ^3 U_2}=V^1_{Z^3\oZ^3 U_3}=0$.
\end{itemize}
\end{Theorem}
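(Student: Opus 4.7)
The plan is to continue the order-by-order moving frame construction from the point where Branch 3-3-1 is entered. At order six, the assumption $V^1_{Z^3\oZ^3}\neq 0$ permits the cross-section normalization $V^1_{Z^3\oZ^3}=1$, and substituting this into the first equation of \eqref{rec-rel-ord-6} solves in closed form for the real Maurer-Cartan form $\alpha^3_{U_3}$ in terms of the lifted horizontal coframe. After this substitution, the list \eqref{unnormalized-MC-br2-ord5} collapses to the single Maurer-Cartan form $\alpha^2_{U_3}$, and the whole theorem becomes the question of whether an order seven invariant can absorb this last form.

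First I would assemble the complete list of already normalized Maurer-Cartan forms \eqref{MC-forms} and insert them into \eqref{eq: order zero structure equations} to obtain the structure equations \eqref{struc-eq} of the lifted horizontal coframe. A symbolic inspection shows that only the four order seven invariants $V^1_{Z^4\oZ^3}$ and $V^1_{Z^3\oZ^3 U_j}$ for $j=1,2,3$ enter these equations, so these are the only candidates for the final normalization. Next I would check, using the recurrence formula, that $dV^1_{Z^4\oZ^3}\equiv 0$ and $dV^1_{Z^3\oZ^3 U_1}\equiv 0$ modulo the horizontal coframe, which makes these two invariants inert and useless for normalization. The recurrence relations for the vanished invariants $V^1_{Z^3\oZ^2 U_2}$ and $V^1_{Z^3\oZ^2 U_3}$ yield the identity $V^1_{Z^3\oZ^3 U_2}=V^1_{Z^3\oZ^3 U_3}$, so the single remaining candidate is $V^1_{Z^3\oZ^3 U_2}$, whose recurrence, after a long simplification handled in the {\sc Maple} worksheet \cite{Maple}, reduces to
\[
dV^1_{Z^3\oZ^3 U_2}\equiv V^1_{Z^3\oZ^3 U_2}\,\alpha^2_{U_3}\qquad\bmod\ \omega^z,\omega^{\oz},\omega^j.
\]
This exhibits $V^1_{Z^3\oZ^3 U_2}$ as a relative invariant and dichotomizes the branch.

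In Branch 3-3-1-a, where $V^1_{Z^3\oZ^3 U_2}\neq 0$, the normalization $V^1_{Z^3\oZ^3 U_2}=1$ immediately solves for $\alpha^2_{U_3}$ in the displayed recurrence. All Maurer-Cartan forms are therefore specified, the equivariant moving frame is complete, and, as in Theorems \ref{th-branch-1} and \ref{th-branch-2}, the origin-preserving holomorphic map to \eqref{normal-form-Branch-3-3-1} is unique and $\frak{aut}_{CR}(M^5)$ is five dimensional, with the $\{e\}$-structure on $M^5$ given by \eqref{struc-eq} after inserting the expression just obtained for $\alpha^2_{U_3}$.

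The main obstacle is Branch 3-3-1-b, where $V^1_{Z^3\oZ^3 U_2}=0$ leaves $\alpha^2_{U_3}$ free and the recurrence machinery offers no further normalization. Here I would pass to Cartan's classical language and verify that the structure equations \eqref{struc-eq}, modified by $V^1_{Z^3\oZ^3 U_2}=V^1_{Z^3\oZ^3 U_3}=0$, have degree of indeterminacy equal to zero, so that the horizontal coframe on $M^5$ is non-involutive. Proposition 12.1 of \cite{Olver-1995} then forces prolongation to the six dimensional space $M^5\times\mathcal G^{\sf red}$ equipped with the augmented coframe $\{\omega^z,\omega^{\oz},\omega^1,\omega^2,\omega^3,\alpha^2_{U_3}\}$, where $\mathcal G^{\sf red}$ is the one dimensional subgroup corresponding to the remaining parameter. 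The final and most computational step is to evaluate $d\alpha^2_{U_3}$ via the second equation of \eqref{struc-eq-MC}; this requires the explicit expressions of all first order Maurer-Cartan forms $\alpha^2_{ZU_3},\alpha^2_{U_jU_3},\mu_{U_3},\alpha^j_{U_3},\gamma^j_{U_3}$ appearing there, most of which turn out to vanish in this subbranch, and the two that do not contribute are $\alpha^2_{U_2U_3}$ and $\alpha^2_{U_2}$. After collecting terms one obtains \eqref{dalpha2-U3}, and the six structure equations together furnish a complete $\{e\}$-structure on the prolonged space, which reduces the biholomorphic equivalence problem and forces $\dim\frak{aut}_{CR}(M^5)=6$.
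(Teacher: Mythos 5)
Your proposal is correct and follows essentially the same route as the paper: normalize $V^1_{Z^3\oZ^3}=1$ to fix $\alpha^3_{U_3}$, pass to the structure equations \eqref{struc-eq} to isolate the candidates $V^1_{Z^4\oZ^3}$, $V^1_{Z^3\oZ^3 U_j}$, discard the first two as inert, use $V^1_{Z^3\oZ^3 U_2}=V^1_{Z^3\oZ^3 U_3}$ and its relative-invariant recurrence to split into subbranches a and b, and in the latter prolong to $M^5\times\mathcal G^{\sf red}$ and compute $d\alpha^2_{U_3}$ via \eqref{struc-eq-MC} to reach \eqref{dalpha2-U3}. One small phrasing slip: $\alpha^2_{U_2U_3}=\tfrac{1}{16}(\omega^z+\omega^{\oz})$ is precisely the nonvanishing form whose term produces \eqref{dalpha2-U3}, whereas it is the $\alpha^2_{U_2}=\alpha^3_{U_3}$ contribution that cancels against the $j=3$ term.
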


Remaining finally unnormalized  the Maurer-Cartan form $\alpha^2_{U_3}$ demonstrates that the construction of  a complete equivariant moving frame is impossible in Branch 3-3-1-b. It is for this reason ({\it cf.} \cite{Valiquette-SIGMA}) that the action of $\mathcal G$ on CR manifolds $M^5$ belonging to this branch is not free and it admits a $1$-dimensional {\it isotropy subgroup}, with the associated Maurer-Cartan coframe $\{\alpha^2_{U_3}\}$. However, as suggested in \cite{Valiquette-SIGMA}, our normalizations lead one to construct a {\it partial} moving frame on $M^5$. Fortunately, at this stage that all possible normalizations are applied in this branch and as stated at the page $18$ of \cite{Valiquette-SIGMA}, none of the lifted differential invariants will depend on the isotropy group parameter. Thus, the normal form $N$ of each CR manifold $M^5$ belonging to Branch 3-3-1-b is unique. If $\varphi, \psi: M^5\rightarrow N$ are two origin-preserving holomorphic transformations which bring $M^5$ into its normal form, then clearly the combination $\varphi\circ\psi^{-1}$ belongs to the isotropy group of $N$ at the origin ({\it see also} \cite[Theorem 4.11]{Valiquette-SIGMA}). Consequently

\begin{Corollary}
The origin-preserving holomorphic transformation which brings a $5$-dimensional totally nondegenerate CR manifold $M^5$ of Branch {\rm 3-3-1-b} into its normal form $N$ is unique up to the action of the $1$-dimensional isotropy group of $N$ at the origin.
\end{Corollary}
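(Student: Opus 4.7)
The plan is to combine the partial-moving-frame formalism of Valiquette with the residual structure equations \eqref{struc-eq}--\eqref{dalpha2-U3} obtained above. In Branch {\rm 3-3-1-b} the normalization procedure terminates with exactly one Maurer-Cartan form, $\alpha^2_{U_3}$, left unspecified; by construction this form is the lift of the unique residual group parameter, and the $1$-parameter subpseudogroup $\mathcal{G}^{\sf iso}\subset\mathcal{G}$ it generates at the origin of the normal form $N$ is the candidate for the isotropy group in the statement. The proof must then establish two things: (i) the normal form $N$ attached to $M^5$ is an intrinsic object, not a member of an ambiguous family parametrized by the residual parameter, and (ii) any two origin-preserving biholomorphisms that realize it differ by an element of $\mathcal{G}^{\sf iso}$.

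For (i), I would verify that every non-phantom lifted invariant is independent of the residual group parameter, i.e.\ that $dV^\bullet$ contains no $\alpha^2_{U_3}$-component for every non-phantom $V^\bullet$. The generating invariants $V^1_{Z^4\oZ^3}$ and $V^1_{Z^3\oZ^3 U_1}$ already exhibit this vanishing, as is visible from the structure equations \eqref{struc-eq} together with the computations immediately preceding Theorem \ref{th-branch-3-3-1}. The recurrence formula $d\iota(\Omega)=\iota[d\Omega+{\bf v}^{(\infty)}(\Omega)]$ applied to the invariant derivations dual to $\omega^z,\omega^{\oz},\omega^j$ then propagates this vanishing order by order: since those derivations are themselves $\alpha^2_{U_3}$-free, differentiating an $\alpha^2_{U_3}$-free invariant produces another such. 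Hence the Taylor coefficients of the normal form are genuine functions on $M^5$ and $N$ depends on $M^5$ alone.

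For (ii), given two origin-preserving holomorphic maps $\varphi,\psi\colon (M^5,0)\to(N,0)$ which both carry $M^5$ into its normal form, the composition $\varphi\circ\psi^{-1}$ is an origin-preserving self-biholomorphism of $N$ whose induced action on the infinite jet fixes every normalized coordinate of the cross-section \eqref{normalization-partial}--\eqref{normalizations-branch-3} augmented by $V^1_{Z^3\oZ^3}=1$. By the characterization of partial moving frames \cite[Theorem 4.11]{Valiquette-SIGMA}, the elements of $\mathcal{G}$ with this property are precisely those of the isotropy pseudo-subgroup of $N$ at the origin, whence $\varphi\circ\psi^{-1}\in\mathcal{G}^{\sf iso}$. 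The delicate point I anticipate as the main obstacle is the confirmation that $\dim\mathcal{G}^{\sf iso}=1$ and not smaller: one must check that the structure equation \eqref{dalpha2-U3}, together with $d^2=0$ applied to the remaining equations of \eqref{struc-eq}, imposes no hidden normalization on $\alpha^2_{U_3}$ that would collapse the isotropy further. This is precisely the non-involutivity argument invoked just before the theorem statement, which guarantees that the prolonged coframe on $M^5\times\mathcal{G}^{\sf red}$ carries a genuine $1$-dimensional residual symmetry, and thereby closes the proof.
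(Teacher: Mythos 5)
Your argument is correct and follows essentially the same route as the paper: both rest on Valiquette's partial moving frame theory, first noting that once all possible normalizations are exhausted the lifted invariants no longer depend on the residual group parameter (so the normal form $N$ is intrinsic), and then observing that for two normalizing maps $\varphi,\psi$ the composition $\varphi\circ\psi^{-1}$ fixes the cross-section and hence lies in the isotropy group of $N$ at the origin, by \cite[Theorem 4.11]{Valiquette-SIGMA}. The only difference is that your step (i) tries to re-derive by hand (via invariant differentiation) what the paper simply quotes from page 18 of \cite{Valiquette-SIGMA}, and your step confirming $\dim\mathcal G^{\sf iso}=1$ is already settled in Theorem \ref{th-branch-3-3-1} by the non-normalizability of $\alpha^2_{U_3}$ read off from \eqref{struc-eq}--\eqref{dalpha2-U3}.
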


\subsubsection{{\bf Branch 3-3-2:}}

Now, let us assume that $V^1_{Z^4\oZ^2}=0$ but at least one of the lifted differential invariants $V^1_{Z^2\oZ^2 U_2 U_2}$ or $V^1_{Z^2\oZ^2 U_2 U_3}$ is nonzero. Disregarding the very specific case that $2V^1_{Z^2\oZ^2U_2U_3}=9 V^1_{Z^2\oZ^2U_3U_3}$, an slightly careful glance on the second and third recurrence relations in \eqref{rec-rel-ord-6} shows that by normalizing $V^1_{Z^2\oZ^2 U_2 U_2}, V^1_{Z^2\oZ^2 U_2 U_3}=1$ or $0$, depending to their vanishing$/$non-vanishing, it is possible in this branch to specify both the remained Maurer-Cartan forms $\alpha^2_{U_2}$ and $\alpha^3_{U_3}$ as:
\[
0\equiv\alpha^2_{U_3} \qquad {\rm and} \qquad 0\equiv\alpha^3_{U_3}, \qquad {\rm mod} \ \omega^z, \omega^{\oz}, \omega^j.
\]

Therefore, all the appearing Maurer-Cartan forms are normalizable here and we can construct successfully a complete moving frame and normal form.

\begin{Theorem}
\label{th-branch-3-3-2-a}
Let $M^5\subset\mathbb C^4$ be a five dimensional real-analytic totally nondegenerate CR manifold enjoying the branch assumptions:
\begin{equation}
\label{A-3-3-2-a}
\aligned
0&=V^3_{Z^3\oZ}=V^3_{Z^2\oZ U_1},
\\
0&= V^2_{Z^2\oZ^2U_1}=V^3_{Z^2\oZ^2U_1}=V^1_{Z^3\oZ U_1}=V^3_{Z^2\oZ^2 U_2}=V^1_{Z^2\oZ^2 U_1}= V^1_{Z^2\oZ^2 U_3}={\rm Re}\, V^2_{Z^3\oZ^2},
\\
0&= V^1_{Z^3\oZ^3} \qquad {\rm and} \qquad (0,0)\neq(V^1_{Z^2\oZ^2U_2U_3}, V^1_{Z^2\oZ^2U_3U_3})
\endaligned
\end{equation}
together with the minor assumption $2V^1_{Z^2\oZ^2U_2U_3}\neq 9 V^1_{Z^2\oZ^2U_3U_3}$. Then, there exists a unique origin-preserving holomorphic transformation which brings $M^5$ into the complete normal form:
\begin{equation}
\label{normal-form-Branch-3-3-2-a}
\aligned
v^1&=z\oz+ \frac{1}{4}\,V^1_{Z^2\oZ^2U_2U_3} z^2\oz^2u_2u_3+\frac{1}{4}\,V^1_{Z^2\oZ^2U_3U_3} z^2\oz^2u_3u_3+\sum_{j+k+\sharp\ell\geq 7} \frac{1}{j!\,k!\,\ell!}\,V^1_{Z^j\oZ^k U^\ell} z^j \oz^k u^\ell,
\\
v^2&=\frac{1}{2}\,(z^2\oz+z\oz^2)+\sum_{j+k+\sharp\ell\geq 7} \frac{1}{j!\,k!\,\ell!}\,V^2_{Z^j\oZ^k U^\ell} z^j \oz^k u^\ell,
\\
v^3&=-\frac{i}{2}\,(z^2\oz-z\oz^2)+\sum_{j+k+\sharp\ell\geq 7} \frac{1}{j!\,k!\,\ell!}\,V^3_{Z^j\oZ^k U^\ell} z^j \oz^k u^\ell,
\endaligned
\end{equation}
with the cross-section normalizations \eqref{normalization-partial}-\eqref{normalizations-branch-3} added by either $V^1_{Z^2\oZ^2U_2U_3}, V^1_{Z^2\oZ^2U_3U_3}=1$ or $0$, depending upon their vanishing$/$non-vanishing. In this branch, the infinitesimal CR automorphism algebra $\frak{aut}_{CR}(M^5)$ is five dimensional and the biholomorphic equivalence problem to $M^5$ is reducible to an absolute parallelism, namely $\{e\}$-structure, on itself with the structure equations of the lifted horizontal coframe, obtained by \eqref{eq: order zero structure equations} after applying the already mentioned cross-section normalizations and inserting the achieved expressions of the normalized Maurer-Cartan forms.
\end{Theorem}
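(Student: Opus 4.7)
The plan is to continue the moving frame construction from Branch 3-3 (order six) into the specific sub-case isolated in the statement, and to use the last two non-trivial recurrence relations in \eqref{rec-rel-ord-6} to specify the only surviving Maurer-Cartan forms, namely $\alpha^2_{U_3}$ and $\alpha^3_{U_3}$ from \eqref{unnormalized-MC-br2-ord5}. The starting data are: all cross-section normalizations \eqref{normalization-partial}--\eqref{normalizations-branch-3} previously established, the relations \eqref{v-order-5}, \eqref{v-order-5-second-part}, the Branch 3-3 relations \eqref{eq-1}--\eqref{eq-2}, and the branch assumption $V^1_{Z^3\oZ^3}=0$ with $(V^1_{Z^2\oZ^2 U_2U_3},V^1_{Z^2\oZ^2 U_3U_3})\neq (0,0)$.

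Under $V^1_{Z^3\oZ^3}=0$ the first equation of \eqref{rec-rel-ord-6} becomes vacuous, and the remaining two equations form a linear system
\begin{equation*}
\begin{pmatrix} dV^1_{Z^2\oZ^2 U_2U_3} \\ dV^1_{Z^2\oZ^2 U_3U_3} \end{pmatrix} = -\begin{pmatrix} \tfrac{1}{3} V^1_{Z^2\oZ^2 U_3U_3} & \tfrac{8}{9} V^1_{Z^2\oZ^2 U_2U_3} \\ \tfrac{2}{9} V^1_{Z^2\oZ^2 U_2U_3} & \tfrac{8}{3} V^1_{Z^2\oZ^2 U_3U_3} \end{pmatrix} \begin{pmatrix} \alpha^2_{U_3} \\ \alpha^3_{U_3} \end{pmatrix},
\end{equation*}
modulo the lifted horizontal coframe. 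The step I would carry out is to normalize each of $V^1_{Z^2\oZ^2 U_2 U_3}$ and $V^1_{Z^2\oZ^2 U_3 U_3}$ to either $1$ or $0$ according to whether it vanishes, so that the left-hand side of the above system becomes identically zero. The minor assumption in the hypothesis is exactly what ensures that the coefficient matrix is invertible under this choice of normalization; one then solves uniquely for $\alpha^2_{U_3}$ and $\alpha^3_{U_3}$ as linear combinations of $\omega^z,\omega^{\oz},\omega^j$. The explicit expressions are obtained from the non-trivial horizontal parts of the two recurrence relations, which were systematically suppressed above but are recorded in the \textsc{Maple} worksheet \cite{Maple}.

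Once both Maurer-Cartan forms are specified, no group parameter remains, so the equivariant moving frame is complete. By the general principles recalled in Section \ref{sec-moving-frame} (freeness and the cross-section characterization of normal forms), this forces the origin-preserving holomorphic transformation carrying $M^5$ to the stated normal form \eqref{normal-form-Branch-3-3-2-a} to be unique. The absolute parallelism on $M^5$ is then built out of the coframe $\{\omega^z,\omega^{\oz},\omega^1,\omega^2,\omega^3\}$, whose structure equations are obtained by substituting the now fully normalized Maurer-Cartan forms into \eqref{eq: order zero structure equations}; the dimension count $\dim\mathfrak{aut}_{CR}(M^5)=5$ follows because the lifted action is locally free and the isotropy subgroup is trivial.

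The only genuinely delicate point is verifying that the determinant of the above $2\times 2$ matrix is non-zero under the stated minor assumption, i.e., that excluding the algebraic relation $2V^1_{Z^2\oZ^2U_2U_3}=9V^1_{Z^2\oZ^2U_3U_3}$ is exactly what guarantees invertibility; the two degenerate loci (both invariants zero, or the excluded relation) correspond precisely to the sub-cases that must be treated separately, and this is the place where one must be most careful, checking consistency with the earlier normalizations and confirming that the solved expressions for $\alpha^2_{U_3},\alpha^3_{U_3}$ do not reintroduce any obstruction in lower-order recurrence relations. All remaining statements, in particular the explicit form of the normal form and the structure of the resulting $\{e\}$-structure, follow by routine but voluminous substitution, for which the symbolic implementation is indispensable.
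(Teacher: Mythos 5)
Your proposal follows essentially the same route as the paper: with $V^1_{Z^3\oZ^3}=0$ the first relation of \eqref{rec-rel-ord-6} drops out, the two invariants $V^1_{Z^2\oZ^2U_2U_3}$, $V^1_{Z^2\oZ^2U_3U_3}$ are normalized to $1$ or $0$ according to their (non)vanishing, and the resulting $2\times 2$ linear system from the remaining two recurrence relations is solved for $\alpha^2_{U_3}$ and $\alpha^3_{U_3}$ modulo the horizontal coframe, completing the moving frame and yielding the unique normal form, the $5$-dimensional algebra $\frak{aut}_{CR}(M^5)$ and the $\{e\}$-structure. The only caveat, which you yourself flag as the delicate point, is that the minor assumption really serves to guarantee transversality (freeness of the residual two-parameter action on the pair of invariants, i.e.\ nonvanishing of the determinant at the unnormalized values) rather than invertibility of the matrix after the entries are replaced by $0$/$1$; this is treated at the same level of detail in the paper itself.
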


\subsubsection{{\bf Branch 3-3-3}}

By the assumptions of this branch, all order six differential invariants are vanished identically and none of the remained Maurer-Cartan forms $\alpha^2_{U_3}$ and $\alpha^3_{U_3}$ is normalizable in this order. In this case, the Maurer-Cartan forms appeared in \eqref{MC-forms} have the same expressions with the only difference that here $\alpha^3_{U_3}$ is not specified. Inserting these expressions into the structure equations \eqref{eq: order zero structure equations} of the lifted horizontal coframe gives:

\begin{equation}
\label{struc-eq-branch-3-3-1}
\aligned
d\omega^z&=\big(\frac{1}{3}\,\alpha^3_{U_3}-i\alpha^2_{U_3}\big)\wedge\omega^z,
\\
d\omega^{\oz}&=\big(\frac{1}{3}\,\alpha^3_{U_3}+i\alpha^2_{U_3}\big)\wedge\omega^{\oz},
\\
d\omega^1&=2i\,\omega^z\wedge\omega^{\oz}+\frac{2}{3}\,\alpha^3_{U_3}\wedge\omega^{\oz},
\\
d\omega^2&=-\omega^z\wedge\omega^1-\omega^{\oz}\wedge\omega^1+\alpha^3_{U_3}\wedge\omega^2+\alpha^2_{U_3}\wedge\omega^3,
\\
d\omega^3&=i\,\omega^z\wedge\omega^1-i\,\omega^{\oz}\wedge\omega^1-\alpha^2_{U_3}\wedge\omega^2+\alpha^3_{U_3}\wedge\omega^3.
\endaligned
\end{equation}

These structure equations are of {\it constant type} and hence it will be impossible, even in higher orders, to normalize two remained Maurer-Cartan forms. Moreover, they plainly imply that the horizontal lifted coframe $\{\omega^z, \omega^{\oz}, \omega^1, \omega^2, \omega^3\}$ is non-involutive and hence the solution of biholomorphic equivalence problem to $5$-dimensional CR manifolds $M^5$, belonging to this subbranch, completely relies upon the solution of equivalence problem between $7$-dimensional {\it prolonged spaces} $M^{\sf pr}:=M^5\times \mathcal G^{\sf red}$ equipped with the coframe $\{\omega^z, \omega^{\oz}, \omega^1, \omega^2, \omega^3, \alpha^2_{U_3}, \alpha^3_{U_3}\}$. Here, $\mathcal G^{\sf red}$ is the $2$-dimensional subgroup obtained by the normalizations applied to the original pseudo-group $\mathcal G$ . Thus, we need only to find the structure equations of the two added $1$-forms $\alpha^2_{U_3}$ and $\alpha^3_{U_3}$. In this branch, our computations show that:
\[
0=\alpha^k_{ZU_3}=\alpha^k_{\oZ U_3}=\alpha^k_{U_jU_3}=\mu_{U_3}=\alpha^1_{U_3}=\alpha^k_{V^j}, \qquad \alpha^2_{U_2}=\alpha^3_{U_3}, \qquad \alpha^3_{U_2}=-\alpha^2_{U_3}.
\]
Then, by the help of the formula \eqref{struc-eq-MC}, one plainly receives that:
\begin{equation}
\label{struc-eq-MC-3-3-1}
d\alpha^2_{U_3}=0, \qquad \qquad d\alpha^3_{U_3}=0.
\end{equation}

These two structure equations together with \eqref{struc-eq-branch-3-3-1} provide the final desired $\{e\}$-{\it structure} of biholomorphic equivalence problem to 5-dimensional totally nondegenerate CR manifolds, belonging to this branch. Since these seven structure equations are of constant type, all CR manifolds in this branch are {\it biholomorphically equivalent}. Since the cubic model $M^5_{\tt c}$ defined as \eqref{model-cubic} belongs to this branch thus, we may assert that

\begin{Theorem}
\label{th-branch-3-3-2-b}
Let $M^5\subset\mathbb C^4$ be a five dimensional real-analytic totally nondegenerate CR manifold  enjoying the branch assumptions:
\begin{equation}\label{umbilical}
\aligned
0&=V^3_{Z^3\oZ}=V^3_{Z^2\oZ U_1},
\\
0&=V^2_{Z^2\oZ^2U_1}=V^3_{Z^2\oZ^2U_1}=V^1_{Z^3\oZ U_1}=V^3_{Z^2\oZ^2 U_2}=V^1_{Z^2\oZ^2 U_1}= V^1_{Z^2\oZ^2 U_3}={\rm Re}\, V^2_{Z^3\oZ^2}
\\
0&= V^1_{Z^3\oZ^3}=V^1_{Z^2\oZ^2 U_2 U_3}=V^1_{Z^2\oZ^2 U_3 U_3}.
\endaligned
\end{equation}
Then, $M^5$ is biholomorphically equivalent to the model cubic $M^5_{\tt c}$ and its defining equations can be converted into the simple normal form:
\begin{equation}
\label{model-cubic-2}
\aligned
v^1&=z\oz,
\\
v^2&=z^2\oz+z\oz^2,
\\
v^3&=i\,(z^2\oz-z\oz^2).
\endaligned
\end{equation}
In this case, the infinitesimal CR automorphism algebra $\frak{aut}_{CR}(M^5)$ is of the maximum possible dimension $7$ with the structure, displayed in the table on page $3231$ of \cite{5-cubic}.
\end{Theorem}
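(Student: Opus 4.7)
The plan is to exploit the fact that under the assumptions \eqref{umbilical} all order-six differential invariants appearing in the recurrence relations \eqref{dREV2Z2Zbar3} and \eqref{rec-rel-ord-6} vanish, so neither of the two Maurer-Cartan forms $\alpha^2_{U_3}$ and $\alpha^3_{U_3}$ can be specified at this order. First I would feed the branch hypothesis into the order-five recurrence calculations and verify, using \eqref{eq-1} and \eqref{eq-2}, that in fact every surviving order-six lifted differential invariant is zero. Consequently all the Maurer-Cartan normalizations already effected in Branch $3$-$3$ remain in force and take exactly the form displayed in \eqref{MC-forms}, with $\alpha^3_{U_3}$ no longer normalized. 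Substituting these expressions into the order zero structure equations \eqref{eq: order zero structure equations} yields the structure equations \eqref{struc-eq-branch-3-3-1} for the lifted horizontal coframe on $M^5$.

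Next I would observe that these structure equations are of \emph{constant type}: no nonzero torsion coefficient depends on the base point, and in particular the lifted horizontal coframe is non-involutive with vanishing degree of indeterminancy. Following Cartan's classical strategy, this forces the equivalence problem to be prolonged to the seven-dimensional space $M^{\sf pr} = M^5 \times \mathcal G^{\sf red}$, where $\mathcal G^{\sf red}$ is the two-dimensional subgroup parametrized by $\alpha^2_{U_3}$ and $\alpha^3_{U_3}$. Using the general Maurer-Cartan structure equations \eqref{struc-eq-MC} together with the identities $\alpha^2_{U_2} = \alpha^3_{U_3}$ and $\alpha^3_{U_2} = -\alpha^2_{U_3}$ already derived in this branch, and with the vanishing of the relevant second order Maurer-Cartan forms $\alpha^k_{ZU_3}$, $\alpha^k_{\oZ U_3}$, $\alpha^k_{U_jU_3}$, $\mu_{U_3}$, $\alpha^1_{U_3}$, $\alpha^k_{V^j}$, one checks that the additional structure equations are simply $d\alpha^2_{U_3} = 0$ and $d\alpha^3_{U_3} = 0$, as in \eqref{struc-eq-MC-3-3-1}.

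Having established that the full system \eqref{struc-eq-branch-3-3-1}--\eqref{struc-eq-MC-3-3-1} is a closed $\{e\}$-structure on $M^{\sf pr}$ with all torsion coefficients constant, Cartan's equivalence theorem immediately implies that any two CR manifolds $M^5$ satisfying \eqref{umbilical} are locally biholomorphically equivalent. To conclude, I would verify directly from the defining equations \eqref{model-cubic} of Beloshapka's cubic model $M^5_{\tt c}$ that $M^5_{\tt c}$ itself lies in Branch $3$-$3$-$3$, so every $M^5$ satisfying \eqref{umbilical} is biholomorphic to $M^5_{\tt c}$ and can therefore be put in the normal form \eqref{model-cubic-2}. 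The assertion on the dimension of $\frak{aut}_{CR}(M^5)$ then follows from the fact that $\dim M^{\sf pr} = 7$ together with the already-known structure of $\frak{aut}_{CR}(M^5_{\tt c})$ recorded in \cite[p.~3231]{5-cubic}.

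The main obstacle I anticipate is the bookkeeping in the second paragraph: it requires a careful inspection of every term contributing to the exterior derivatives of $\alpha^2_{U_3}$ and $\alpha^3_{U_3}$ via \eqref{struc-eq-MC}, checking that all the seemingly dangerous summands involving higher-order Maurer-Cartan forms either vanish identically in this branch or cancel in pairs. Once the exact shape of these two equations is confirmed to be the trivial $d\alpha^2_{U_3} = d\alpha^3_{U_3} = 0$, the remainder of the argument is a direct application of Cartan's equivalence machinery.
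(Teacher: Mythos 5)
Your proposal is correct and follows essentially the same route as the paper: vanishing of all the relevant invariants under \eqref{umbilical}, the structure equations \eqref{struc-eq-branch-3-3-1} of constant type, prolongation to the $7$-dimensional space $M^5\times\mathcal G^{\sf red}$ with $d\alpha^2_{U_3}=d\alpha^3_{U_3}=0$, and Cartan's theorem for constant-type $\{e\}$-structures combined with the observation that the cubic model $M^5_{\sf c}$ itself lies in this branch. The only cosmetic difference is that you justify the dimension count via $\dim M^{\sf pr}=7$, whereas the paper simply invokes the known structure of $\frak{aut}_{CR}(M^5_{\sf c})$ from \cite{5-cubic}; both are equivalent.
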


\begin{Remark}
The structure equations \eqref{struc-eq-branch-3-3-1}-\eqref{struc-eq-MC-3-3-1} obtained in this branch are fully equivalent to the structure equations \cite[eq. (36)]{5-cubic}. The correspondence is given by:
\[
\zeta\leftrightarrow\omega^z, \qquad \rho\leftrightarrow\frac{1}{2}\,\omega^1, \qquad \sigma\leftrightarrow\frac{1}{4}\,(\omega^2+i\,\omega^3), \qquad \alpha\leftrightarrow\frac{1}{3}\,\alpha^3_{U_3}-i\,\alpha^2_{U_3}.
\]
\end{Remark}

As the construction of a complete moving frame was here impossible, the origin-preserving holomorphic transformations which bring CR manifolds of this branch into the simple normal form \eqref{model-cubic-2} are no longer unique. Indeed, applying each CR automorphism of the $2$-dimensional isotropy group ${\sf Aut}_{0}(M^5_{\sf c})$ of the cubic model at the origin, to the normal form \eqref{model-cubic-2} keeps it still in normal form. This isotropy group is found in  \cite[Proposition 3.2]{5-cubic} as the group of local flows of holomorphic vector fields generated by:
\begin{equation*}
\aligned
D&=z\,\frac{\partial}{\partial z}+2w^1\,\frac{\partial}{\partial w^1}+3w^2\,\frac{\partial}{\partial w^2}+3w^3\,\frac{\partial}{\partial w^3},
\\
R&=iz\,\frac{\partial}{\partial z}-w^3\,\frac{\partial}{\partial w^2}+w^2\,\frac{\partial}{\partial w^3}.
\endaligned
\end{equation*}
If $\varphi, \psi: M^5\rightarrow M^5_{\sf c}$ are two origin-preserving holomorphic transformations which bring a CR manifold $M^5$ to the normal form $M^5_{\sf c}$, then clearly there exists a unique transformation $\gamma\in{\sf Aut}_{0}(M^5_{\sf c})$ satisfying $\varphi=\gamma\circ\psi$. Thus, we have

\begin{Corollary}
The origin-preserving holomorphic transformation which brings a $5$-dimensional totally nondegenerate CR manifold $M^5$ of Branch {\rm 3-3-3} into the normal form \eqref{model-cubic-2} is unique, up to the action of the isotropy subgroup ${\sf Aut}_{0}(M^5_{\sf c})$, generated by the above holomorphic vector fields $D$ and $R$.
\end{Corollary}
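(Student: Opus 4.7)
The plan is to run the equivariant moving frame algorithm of Section~\ref{sec-moving-frame} order by order for the action of the biholomorphic pseudo-group $\mathcal{G}$ on the jet bundle of $5$-dimensional real submanifolds of $\mathbb{C}^4$, using the infinitesimal generators \eqref{eq: v} subject to the infinitesimal determining equations \eqref{eq: infinitesimal determining equations} and the linear dependencies \eqref{MC-relations} between Maurer--Cartan forms. The recurrence formula \eqref{rec-formula} is the computational engine throughout: at each order one reads off a normalization of a lifted invariant, then extracts, by pure linear algebra, the expression of one Maurer--Cartan form in terms of horizontal forms and lower-order invariants.

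First I would organize the proof into three phases. \emph{Phase I (universal normalizations, orders $0$ to $4$):} Choose the cross-section $Z=\bar Z=U_j=V^j=0$ at order $0$, the pluriharmonic normalizations $V^k_Z=V^k_{\bar Z}=V^k_{U_j}=0$ at order $1$, the Levi normalizations $V^1_{Z\bar Z}=1$, $V^2_{Z\bar Z}=V^3_{Z\bar Z}=0$ at order~$2$, the Beloshapka cubic normalizations $V^2_{Z^2\bar Z}=V^2_{Z\bar Z^2}=1$, $V^3_{Z^2\bar Z}=-V^3_{Z\bar Z^2}/i\,\cdot\,(-1)=-i$ etc.\ at order~$3$, and the further normalizations of order~$4$; together with the bookkeeping Lemmas \ref{lem-observ-1}--\ref{lem-observ-4} this yields exactly the cross-section \eqref{normalization-partial}, proving the partial normal form \eqref{partial-normal-form}. \emph{Phase II (relative invariants and branching):} Inspection of the recurrence relations \eqref{VZ3Zbar} for the two relative invariants $V^3_{Z^3\bar Z}$ and $V^3_{Z^2\bar Z U_1}$ shows that the unnormalized forms $\alpha^2_{U_3},\alpha^3_{U_3}$ can be pinned down only according to the vanishing pattern of these two invariants, which forces the trichotomy Branch $1$, Branch $2$, Branch $3$.

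\emph{Phase III (branch-by-branch completion):} In Branch~$1$, normalizing $V^3_{Z^3\bar Z}=1$ determines $\alpha^2_{U_3},\alpha^3_{U_3}$ modulo horizontal forms, and then in order~$5$ the normalization ${\rm Re}\,V^1_{Z^4\bar Z}=0$ specifies the last remaining real Maurer--Cartan form ${\rm Re}\,\mu_{U_2}$, yielding \eqref{normal-form-Branch-1} together with uniqueness from freeness of the prolonged action. Branch~$2$ is analogous, with $V^3_{Z^2\bar Z U_1}=1$ followed by ${\rm Im}\,V^2_{Z^3\bar Z^2}=0$, producing \eqref{normal-form-branch-2}. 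Branch~$3$ requires the greatest care: after exploiting \eqref{v-order-5}--\eqref{v-order-5-second-part} and the subbranch split 3-1/3-2/3-3 of Theorem \ref{th-branch-3-order-5}, one enters order~$6$ where the three relative invariants $V^1_{Z^3\bar Z^3}$, $V^1_{Z^2\bar Z^2 U_2U_3}$, $V^1_{Z^2\bar Z^2 U_3U_3}$ trigger the sub-subbranches 3-3-1, 3-3-2, 3-3-3. In Branch~3-3-1 the last remaining form $\alpha^2_{U_3}$ is normalizable only when $V^1_{Z^3\bar Z^3 U_2}\neq 0$ (giving 3-3-1-a); otherwise the coframe is non-involutive and one must invoke Cartan's prolongation (\cite[Prop. 12.1]{Olver-1995}) to obtain an $\{e\}$-structure on a $6$-dimensional space (Branch 3-3-1-b). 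In Branch~3-3-3, all order-six invariants vanish identically and the structure equations \eqref{struc-eq-branch-3-3-1}--\eqref{struc-eq-MC-3-3-1} are of constant type, so all such manifolds are equivalent to the cubic model $M^5_{\sf c}$.

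The uniqueness statement I would split into two cases. When the moving frame is complete (Branches $1$, $2$, 3-1, 3-3-1-a, 3-3-2), the freeness of $\mathcal{G}^{(\infty)}$ at the cross-section forces the transformation producing the normal form to be unique. In Branch 3-3-1-b, only a partial moving frame exists with a $1$-dimensional isotropy subgroup whose Maurer--Cartan coframe is $\{\alpha^2_{U_3}\}$; as noted following \cite[Thm.~4.11]{Valiquette-SIGMA}, no non-phantom invariant depends on this parameter, so the normal form itself is still unique but the normalizing transformation is unique only modulo the $1$-dimensional isotropy of $N$ at the origin. In Branch 3-3-3, the same reasoning produces uniqueness modulo the $2$-dimensional isotropy ${\sf Aut}_0(M^5_{\sf c})$, explicitly generated by the dilation $D$ and rotation $R$ computed in \cite[Prop.~3.2]{5-cubic}. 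The main obstacle throughout is the combinatorial explosion of recurrence relations in Branch~$3$ at orders $\ge 5$, which forces the use of symbolic computation (carried out in the {\sc Maple} worksheet \cite{Maple}) to detect which lifted invariants genuinely obstruct further normalization versus which vanish automatically from the already-applied cross-section equations.
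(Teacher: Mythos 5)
Your proposal follows essentially the same route as the paper: the branch-by-branch moving frame construction culminating in the constant-type structure equations \eqref{struc-eq-branch-3-3-1}--\eqref{struc-eq-MC-3-3-1}, which give equivalence to the cubic model, and then uniqueness of the normalizing map only up to the residual $2$-dimensional isotropy ${\sf Aut}_0(M^5_{\sf c})$ generated by $D$ and $R$ as identified in \cite[Proposition 3.2]{5-cubic}. The paper phrases the last step as the elementary composition argument that $\varphi\circ\psi^{-1}$ lies in ${\sf Aut}_0(M^5_{\sf c})$, whereas you phrase it through the partial moving frame and the unnormalized forms $\alpha^2_{U_3},\alpha^3_{U_3}$; these are the same argument in different language, so your sketch is correct.
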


The above Theorem \ref{th-branch-3-3-2-b} has still another interesting consequence. Indeed, among all branches appeared in this paper, only the present branch 3-3-3 consists of totally nondegenerate CR manifolds $M^5$ with ${\rm dim}\,\frak{aut}_{CR}(M^5)=7$. This characterizes such CR manifolds plainly in terms of their infinitesimal CR automorphisms.

\begin{Corollary}
\label{cor}
A $5$-dimensional real-analytic totally nondegenerate CR manifold $M^5$ is biholomorphically equivalent to the cubic model $M^5_{\sf c}$ if and only if ${\rm dim}\,\frak{aut}_{CR}(M^5)=7$. In other words, up to the biholomorphic equivalence and in the class of totally nondegenerate CR manifolds considered in this paper, $M^5_{\sf c}$ is the unique member with the maximum dimension of the associated algebra of infinitesimal CR automorphisms.
\end{Corollary}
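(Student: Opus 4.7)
The plan is to establish the biconditional by combining the biholomorphic invariance of $\frak{aut}_{CR}$ with the exhaustive branch analysis carried out in Sections \ref{section-NF} through \ref{Sec-Branch-3}. The forward implication is essentially tautological: if $M^5$ is biholomorphically equivalent to $M^5_{\sf c}$, then their algebras of infinitesimal CR automorphisms are isomorphic as Lie algebras, and by Theorem \ref{th-branch-3-3-2-b} (or equivalently \cite[Proposition 3.2]{5-cubic}) we have $\dim \frak{aut}_{CR}(M^5_{\sf c}) = 7$.

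For the reverse implication, I would proceed by case analysis over the branches summarized in the diagram of the introduction. The key observation, standard in the equivariant moving frames formalism, is that $\dim \frak{aut}_{CR}(M^5)$ equals the sum of $5 = \dim M^5$ and the number of independent Maurer-Cartan forms left unnormalized by the completed moving frame construction (equivalently, the dimension of the residual isotropy subgroup stabilizing the normal form at the origin). I would then tally, branch by branch, the surviving Maurer-Cartan forms using the results already in hand. Branches $1$, $2$, $3$-$1$, $3$-$3$-$1$-a, and $3$-$3$-$2$ each admit a complete moving frame and thus contribute dimension exactly $5$ (Theorems \ref{th-branch-1}, \ref{th-branch-2}, \ref{th-branch-3-order-5}, \ref{th-branch-3-3-1}, and \ref{th-branch-3-3-2-a}). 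Branch $3$-$3$-$1$-b leaves only $\alpha^2_{U_3}$ unnormalized, giving dimension $6$ (Theorem \ref{th-branch-3-3-1}). Branch $3$-$2$ normalizes $\alpha^3_{U_3}$ already at order five (Theorem \ref{th-branch-3-order-5}), leaving at most the single Maurer-Cartan form $\alpha^2_{U_3}$ free and hence dimension at most $6$. Only Branch $3$-$3$-$3$ leaves both $\alpha^2_{U_3}$ and $\alpha^3_{U_3}$ unnormalized in a constant-type fashion (Theorem \ref{th-branch-3-3-2-b}), yielding dimension precisely $7$. Consequently, under the hypothesis $\dim \frak{aut}_{CR}(M^5) = 7$, the manifold $M^5$ must lie in Branch $3$-$3$-$3$, and Theorem \ref{th-branch-3-3-2-b} then delivers the desired biholomorphic equivalence to $M^5_{\sf c}$.

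The delicate point in this plan is Branch $3$-$2$, whose complete normal form is deferred to a forthcoming investigation. Fortunately, no detailed higher-order analysis of that branch is required for the present corollary: the information already extracted at order five, namely the explicit specification of $\alpha^3_{U_3}$, suffices to cap the residual isotropy at dimension $1$ and hence rules out Branch $3$-$2$ as a source of dimension-$7$ examples. With this observation in place, the second statement of the corollary follows formally, since among all branches exhausting the class of totally nondegenerate CR manifolds considered here, $M^5_{\sf c}$ (occupying Branch $3$-$3$-$3$) is the unique biholomorphic equivalence class attaining the maximum value $7$ of $\dim \frak{aut}_{CR}$.
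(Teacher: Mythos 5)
Your proposal is correct and follows essentially the same reasoning the paper uses: the corollary is obtained as an immediate consequence of the exhaustive branch analysis, tallying the unnormalized Maurer--Cartan forms (residual isotropy) branch by branch so that only Branch 3-3-3 attains $\dim\frak{aut}_{CR}(M^5)=7$, and then invoking Theorem \ref{th-branch-3-3-2-b} for the equivalence to $M^5_{\sf c}$. Your handling of the deferred Branch 3-2 via the order-five normalization of $\alpha^3_{U_3}$, capping its dimension at $6$, is exactly the observation the paper itself records after Theorem \ref{th-branch-3-order-5}.
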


The results achieved in this branch may remind one of the Chern-Moser discussion of {\it umbilical points}.  In \cite{Chern-Moser}, a point $p$ of a nondegenerate real hypersurface of $\mathbb C^2$ is called umbilical if the coefficient $c_{42}$ of the monomial $z^4\oz$  in the associated normal form \cite[eq. (3.18)]{Chern-Moser} vanishes. It is well-known that locally, a hypersurface is umbilical at each point if and only if it is {\it spherical}, that is: biholomorphically equivalent to the {\it Heisenberg sphere} $\mathbb H^3$, represented in local coordinates $z, w=u+iv$ as:
\[
v=z\oz.
\]
 Following this terminology and thanks to Corollary \ref{cor}, we may regard the assumptions \eqref{umbilical} as {\it umbilical conditions} in which satisfying them by the CR manifold $M^5$ guarantees its biholomorphic equivalence to the cubic model $M^5_{\sf c}$.

\subsection*{Acknowledgment} The author gratefully expresses his sincere thanks to Peter Olver and Francis Valiquette for their helpful comments and discussions during the preparation of this paper. He is also very grateful to Maria Stepanova who noted via a counterexample the existence of a certain error concerning the results of Branch 3-3-1 in the first version of this paper. The research of the author was supported in part by a grant from IPM, No. 98510420.

\bigskip

\end{document}